%
%
%
%
\documentclass[11pt]{amsart}
\baselineskip=7.0mm
\usepackage{amsmath}
\usepackage[dvips]{epsfig}
\setlength{\baselineskip}{1.09\baselineskip}

\usepackage{slashed}
\usepackage{accents}
\usepackage{color}
\usepackage{leftidx}

\usepackage{graphicx}
\usepackage{hyperref}

\newtheorem{theorem}{Theorem}[section]
\newtheorem{lemma}[theorem]{Lemma}
\newtheorem{coro}[theorem]{Corollary}
\newtheorem{pro}[theorem]{Proposition}

\theoremstyle{definition}

\theoremstyle{remark}

\numberwithin{equation}{section}

\newcommand{\theoremref}[1]{\hyperref[#1]{Theorem~\ref*{#1}}}
\newcommand{\lemmaref}[1]{\hyperref[#1]{Lemma~\ref*{#1}}}
\newcommand{\proref}[1]{\hyperref[#1]{Proposition~\ref*{#1}}}
\newcommand{\definitionref}[1]{\hyperref[#1]{Definition~\ref*{#1}}}
\newcommand{\cororef}[1]{\hyperref[#1]{Corollary~\ref*{#1}}}



\setlength{\textwidth}{6.6in} \setlength{\textheight}{8.6in}
\hoffset=-0.83truein
\voffset=-0.1truein

\begin{document}

\title[Local isometric embedding in $\mathbb{R}^3$]{On the local isometric embedding in $\mathbb{R}^3$ of surfaces with zero sets of Gaussian curvature forming cusp domains}

\author{Tsung-Yin Lin}




\begin{abstract}
We study the problem of isometrically embedding a two-dimensional Riemannian manifold into Euclidean three-space. It is shown that if the Gaussian curvature vanishes to finite order and its zero set consists of two smooth curves tangent at a point, then local sufficiently  smooth isometric embeddings exists.
\end{abstract}

\maketitle



\section{Introduction}\label{intro}
It is a natural question to ask whether all two-dimensional Riemannian manifolds admit local isometric embeddedings into $\mathbb{R}^3$, or stated in another way, whether every abstractly defined surface actually arises (at least locally) as a concrete surface that we may visualize in 3-space. The purpose of this paper is to provide a sufficient condition for the existence of such embeddings when the zero set of the Gaussian curvature possesses cusp intersections. 
This will be accomplished by solving the traditional Monge-Amp\'ere equation, some times referred to as the Darboux equation, that is associated with this problem.

We point out first that counterexamples to the existence of local isometric embeddings were constructed for metrics of low regularity, by Pogorelov \cite{MR0286034} and by Nadirashvili and Yuan \cite{MR2393070}, both for metrics $g\in C^{2,1}$. Moreover, Khuri found counterexamples to the local solvability of smooth Monge-Amp\'ere equations in \cite{MR2334827}. Yet the question of whether or not there are any smooth counterexamples to the isometric embedding problem is still open.

On the other hand, affirmative answers have been given for the cases when the Gaussian curvature $K$ does not vanish or when the metric is analytic. These classical results may be proven by standard implicit function theorem arguments when $K\neq 0$, and by the Cauchy-Kowalevski theorem in the analytic case. The first results obtained, in which the curvature was allowed to vanish, were produced by Lin. He dealt with a sufficiently smooth metric with nonnegative curvature in \cite{MR816670}, and a sufficiently smooth metric with curvature vanishing to zeroth order ($K(0)=0$ and $|\nabla K(0)| \neq 0$) in \cite{MR859276}. Many other results were obtained in the last 15 years by Han, Hong, Khuri, and Lin in \cite{MR2094852, MR2183856, MR2261749, MR2015470, MR3070566, MR2330415, MR2308865, MR2641422, MR2833424}. These papers treat much more general situations regarding the vanishing properties of $K$, and involve varying degrees of regularity for the metric and solution . For results on the higher dimensional local isometric embedding problem, see \cite{MR0726313, CCWSY, MR2900544, MR0992597, MR2754069}.

A recent result of Han and Khuri \cite{MR2765727} shows that if $K$ vanishes to finite order and its zero set consists of two Lipschitz curves intersecting transversely at a point, then a sufficiently smooth local isometric embedding exists. The next most natural generalization would be the cases when the zero set consists of two sufficiently smooth curves tangent at a point. One case that we deal with in the current paper may be represented by {Figure \ref{fig:1}}.

\begin{figure}
\centering
\includegraphics[scale=1]{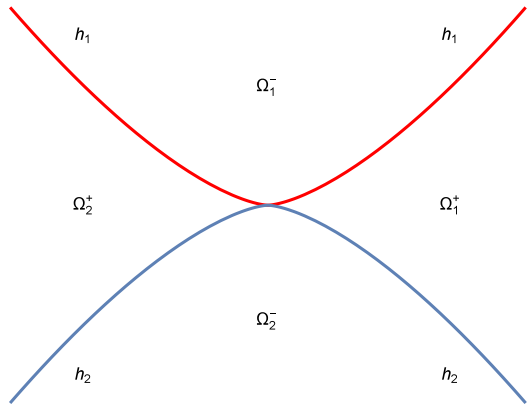}
\caption{Elliptic Cusp.
}
\label{fig:1}
\end{figure}

Without loss of generality we may assume that there are local coordinates $(\hat{x},\hat{y})$ centered at the point of intersection so that the two curves are locally given by graphs $\hat{y}=h_{i}(\hat{x})$, $i=1,2$, with $h_{1}$ and $h_{2}$ lying above and below the $\hat{x}$-axis, respectively.
In order for the regions to have reasonable cusps we require that
\begin{equation} \label{a0}
\lim_{\hat{x} \rightarrow 0} h_{i}(\hat{x}) = \lim_{\hat{x} \rightarrow 0}h'_{i}(\hat{x}) = 0,
\end{equation}
where the upper prime indicates differentiation. Moreover, standard cusps satisfy certain growth restrictions (see for example \cite{MR1643072})
\begin{eqnarray}\label{a3}
&&\frac{h_1(\hat{x})}{\hat{x}} \  \text{ is nondecreasing \ \ and \ \ } \frac{h_2(\hat{x})}{\hat{x}} \  \text{ is nonincreasing,}
\end{eqnarray}
and
\begin{equation}\label{a0.5}
\Big| h^{(l)}_{i} \cdot h^{l-1}_{i}\Big| \le C_l ,\quad\quad \text{for} \quad\quad 0\ \le\ l\ \le\ m_1,
\end{equation}
where $(l)$ denotes differentiation and $l-1$ is an exponent. In each of the four regions cut out by the curves, the Gaussian curvature has a constant sign, which is positive in $\Omega^+_{i}$, and negative in $\Omega^-_{i}$. In this family of problems, $K$ is positive in cusp domains and negative in sufficiently smooth domains.

Another family of cases that we consider may be represented by Figure \ref{fig:2}.
An obvious difference from the previous case is that $K$ is negative in cusp domains, and is positive in sufficiently smooth domains.
In both cases we will open up the cusp domain to facilitate analysis, and for this it is convenient to express each curve as a graph over the tangent line. However, in this hyperbolic cusp setting, we will need more detailed information concerning the character of the cusp. More precisely, it will be required that the graph functions satisfy
\begin{equation}\label{cusp parameter}
|h_{i}(\hat{x})|\geq C \hat{x}^{1+\bar{\alpha}}, 
\end{equation}
for appropriate ranges of $\bar{\alpha}>0$. Note that larger values of $\bar{\alpha}$ correspond to sharper cusps.

\begin{figure}
\centering
\includegraphics[scale=1]{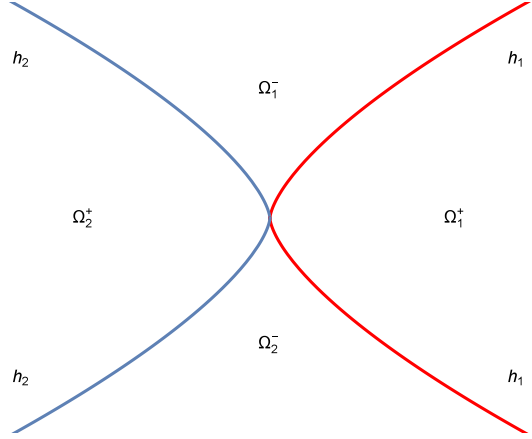}
\caption{Hyperbolic Cusp.
}
\label{fig:2}
\end{figure}

Recall that the local isometric embedding problem is equivalent to the local solvability of a Monge-Amp\'ere equation. To see this, we search for a function $z$ with $|\nabla z(0)|=0$, such that $g-dz^2$ is flat in a neighborhood of the origin. Since this metric is then locally isometric to Euclidean space, there exist functions $(u,v)$ with $g-dz^2 =du^2 +dv^2$. The map $(u,v,z)$ then provides the desired embedding. Furthermore, a direct computation shows that $g-dz^2$  is flat if and only if $z$ satisfies the so called Darboux equation
\begin{equation} \label{a1}
\det \mathrm{Hess}_g z = K (\det g)(1- |\nabla_{g} z|^2).
\end{equation}

Our strategy, following Han and Khuri \cite{MR2765727}, will be to separately analyze each region where the Gaussian curvature has a constant sign. Within such a region, we will solve a linearized version of the equation (\ref{a1}) with special boundary conditions along the zero set, as well as obtain the appropriate a priori estimates. These estimates will then allow an application of the Nash-Moser iteration to obtain a solution of the nonlinear equation in each region. Finally, we patch the solutions found in each region to form a complete solution in a whole neighborhood of the origin. The boundary conditions imposed on the linearized problems ensures that the solutions may be patched together in a smooth way across the zero set. Our main theorem is as follows.

\begin{theorem} \label{main}
Let $g \in C^{m_0}$ be a Riemannian metric defined on a neighborhood of a point in the plane, with Gaussian curvature $K$ vanishing there to the finite order $N$. Moreover let the zero set $K^{-1}(0)$ consist of two $C^{m_0}$ curves which are tangent at the point, and satisfy (\ref{a0}), (\ref{a3}) and (\ref{a0.5}).
\begin{enumerate}
\item If $K$ is positive between the two curves and negative in the complement with $m_0 \ge 3N+19$ and $m_1 \ge 2N+6$, then $g$ admits a $C^{m}$ local isometric embedding into $\mathbb{R}^3$ for all $m \le \min\{ m_1-1, \ m_0 - N - 15 \}$.
\item If $K$ is negative between the two curves and positive in the complement, 
     \eqref{cusp parameter} is satisfied with $\bar{\alpha} \in(0, 1)$, $m_0 \ge \frac{1+\alpha}{1-2\alpha}(6N+33)+4N+33$ where $\alpha = \frac{\bar{\alpha}}{1+\bar{\alpha}}$, and $m_1 \ge \frac{6N+33}{1-2 \alpha}$, then $g$ admits a $C^{m}$ local isometric embedding into $\mathbb{R}^3$ for all
     $m \le \min \left\{ m_1-1, \  \frac{m_0-(1+\alpha)(5N+34)}{(1+\alpha)^2} \right\}$.
\end{enumerate}
\end{theorem}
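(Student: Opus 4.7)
The plan is to implement the region-by-region strategy sketched in the introduction, which mirrors the Han--Khuri approach in the transversal case \cite{MR2765727} but must now accommodate the new geometric feature that the two branches of $K^{-1}(0)$ meet tangentially. The Darboux equation \eqref{a1} is a fully nonlinear Monge--Amp\`ere equation whose linearization around a function $z_0$ has the shape
\[
a^{ij}(z_0)\, w_{ij} + (\text{lower order}) = f,
\]
where $a^{ij}$ is the cofactor matrix of $\mathrm{Hess}_g z_0$ and therefore has principal symbol agreeing in signature with the sign of $K$. Thus in case (1) the cusp region (where $K>0$) carries an elliptic linearization and the outer region a hyperbolic one, while in case (2) the roles are reversed. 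In each of the four (or two types of) regions I would (i) build a polynomial approximate solution $z_0$ that matches prescribed Cauchy data $(z,\partial_\nu z)=(\varphi,\psi)$ on the cusp curves and kills the equation to order $N$ there, (ii) solve the linearization in a weighted function space with tame estimates, and (iii) feed those estimates into a Nash--Moser iteration to produce a $C^m$ regional solution.

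For case (1), the elliptic cusp domain gives a second-order degenerate elliptic problem whose ellipticity deteriorates near the cusp tip; a weighted Schauder theory adapted to this geometry should yield solvability with Dirichlet-type data on the two curves, the monotonicity assumption \eqref{a3} together with the derivative bound \eqref{a0.5} supplying precisely the geometric control required to close the estimates. In the complementary hyperbolic region the linearized operator degenerates along $K^{-1}(0)$, and the two cusp curves are the characteristic curves of the limiting operator there, so one naturally sets up a Goursat-type problem with data on both curves and solves by a Picard iteration in which estimates propagate inward from the boundary. The resulting tame estimates feed the Nash--Moser scheme.

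Case (2) is more delicate because the hyperbolic problem now sits inside the cusp, where the two candidate characteristic curves are tangent rather than transverse and no Goursat formulation is immediately meaningful. The hypothesis \eqref{cusp parameter} is exactly what allows an opening-up change of variables, roughly of the form $\hat y \mapsto \hat y/\hat x^{1+\bar\alpha}$, that renders the two boundary curves transverse in the new coordinates; the parameter $\alpha=\bar\alpha/(1+\bar\alpha)$ measures the derivative weight introduced by the transformation, and the explicit factors $(1+\alpha)^2$ and $1/(1-2\alpha)$ in the regularity thresholds keep precise track of the cost of transferring estimates back to the original coordinates and of keeping $\alpha < 1/2$. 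The outer elliptic region is then handled as in case (1) by weighted Schauder estimates with boundary data on the cusp curves.

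The final step is to patch the regional solutions across $K^{-1}(0)$ into a single $C^m$ embedding. The Cauchy data $(\varphi,\psi)$ are chosen once and for all so that any two adjacent regional solutions share them on the common boundary curve, and the Darboux equation together with the approximate solution $z_0$ then determines all higher normal derivatives on the curves, yielding $C^m$ matching across each branch. The principal obstacle, in both cases, is step (ii): producing Nash--Moser-ready tame estimates for the degenerate linearization uniformly up to, and across, the cusp tip, where the Tricomi/Keldysh-type degeneracy on the zero curves collides with the vanishing geometry at the point of tangency. This is precisely the place where the quantitative hypotheses on $m_0$, $m_1$, and (in case (2)) the cusp parameter $\bar\alpha$ enter, and I expect most of the technical work of the paper to be concentrated there.
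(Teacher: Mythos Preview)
Your overall architecture---solve the Darboux equation region by region, linearize, run Nash--Moser, patch across $K^{-1}(0)$---matches the paper. But two of your concrete mechanisms are wrong in ways that would derail the argument.

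First, the cusp curves are \emph{not} characteristic for the hyperbolic linearization, and the paper relies on this. In the canonical form \eqref{a6} the principal part is $\partial_{\bar x}(k\partial_{\bar x})+\partial_{\bar y}^2$ with $k$ vanishing on $\partial\Omega$; the boundary normal has a nonzero $\partial_{\bar y}$ component (since $h'\neq 0$ away from the tip), so the Cauchy surface is non-characteristic once one regularizes by $\bar K_\theta=\bar K-\theta\bar h'^2$. The paper therefore poses a \emph{Cauchy} problem with data on $\partial\Omega_1$ and obtains existence and energy estimates via weighted $L^2$ integration by parts (\theoremref{t4.2}, \theoremref{t4.5}), not a Goursat problem solved by Picard iteration. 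Non-characteristicity is also what makes the patching work: it lets the equation determine all normal derivatives from the shared Cauchy data. Your Goursat framework would require characteristic initial curves and would not close.

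Second, the hypothesis \eqref{cusp parameter} is not used to build the opening-up coordinate change. The cusp is opened to an infinite strip by $\tilde x=\bar x/\bar h(\bar y)$, $\tilde y=\int_{\bar y}^1 \bar h^{-1}$, using the actual graph $\bar h$, and this step needs only \eqref{a0}--\eqref{a0.5}. The role of \eqref{cusp parameter} is purely analytic: in \lemmaref{l4.4} the estimate picks up factors of $(\bar y/h)^{m}$, a ``loss of weights'' on top of the loss of derivatives, and \eqref{cusp parameter} converts these into $h^{-\alpha m}$, which by the argument of \cororef{c3.1} is traded for $\alpha m$ additional derivatives. This yields a tame estimate with linear growth rate $1+\alpha$ in \theoremref{t4.5}; the Nash--Moser scheme then forces $1+\alpha<2$ and, in the error-absorption steps (VIII$_n$), the sharper $\alpha<\tfrac12$, i.e.\ $\bar\alpha<1$. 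So the constants $(1+\alpha)^2$ and $1/(1-2\alpha)$ track derivative inflation in the iteration, not a Jacobian cost of the map you wrote down.

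A smaller point: for the elliptic cusp the paper uses weighted Sobolev energy estimates (\lemmaref{l3} and \theoremref{t2}), not Schauder theory; and in case~(1) the complementary hyperbolic region is a Lipschitz (not cusp) domain handled directly by the Han--Khuri result (\proref{p3.6}).
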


Note that there is no hypothesis placed on the type of cusp in case $\it{(1)}$. In $\it{(2)}$ the restriction on $\bar{\alpha}$, which will be explained more below, comes from the Nash-Moser iteration as opposed to difficulties incurred when solving the linearized problem. In fact, we have found that it is possible to invert the linearized operator without any restriction on $\bar{\alpha}$. This yields existence results for linear degenerate hyperbolic equations with a cusp Cauchy surface, which may be of independent interest. To the best of our knowledge, even this linear problem has not been previously treated in the literature. We also point out that if $-1 < \bar{\alpha} < 0 $ in $\it{(2)}$ then this is a special case of $\it{(1)}$, and if $\bar{\alpha} =0 $ then this is a special case of the results in \cite{MR2765727}.


The need to apply the Nash-Moser iteration emerges from the degeneracy of the Darboux equation when $K$ vanishes. In particular, when solving the linearized equation a certain ``loss of derivatives" occurs, which the Nash-Moser iteration is specifically designed to deal with (see \cite{MR0206461, MR0199523}). In our situation, a further difficulty arises. Namely, since fundamental tools such as the trace operator and the extension operator are not bounded in standard Sobolev spaces for cusp domains, all the energy estimates must be performed in weighted Sobolev spaces. However when using these spaces, in addition to the loss of derivatives, the loss of weighted functions also appears.

When $K$ is positive in the cusp domain, we manage to absorb the loss of weighted functions into the loss of derivatives by \cororef{c3.1}, and make the observation that the former causes no more damage than the latter. Therefore the standard Nash-Moser iteration works in this case. On the other hand, when $K$ is negative in a cusp domain, although we can still absorb the loss of the weighted functions into the loss of the derivatives, the former grows much faster than the latter when estimating higher derivatives. Since \cite{MR656198} and \cite{MR546504} provide restrictions on such growth for a successful application of the Nash-Moser iteration, we need to impose a condition on the zero set in order to stay within the appropriate framework. This condition is precisely the restriction on the values of $\bar{\alpha}$ that appears in case $(2)$ in Theorem 1.1. Such a hypothesis prevents the cusp from becoming too sharp near the origin. In other words, it prevents the initial Cauchy surface from becoming time-like too fast.

This paper is organized as follows. In Section 2, we introduce notation and review a canonical form of the linearized equation derived in \cite{MR2765727}. In Sections 3 and 5, we solve the linearized equation and obtain Moser estimates in a cusp domain where $K$ is positive, respectively negative. Sections 4 and 6 are then dedicated to the Nash-Moser iteration in the cusp domains having positive and negative curvature respectively.


\section{The linearized canonical form}
Consider here a single cusp domain $\hat{\Omega}$ associated with positive curvature. That is, it arises as in the previous section as the region between two curves which are tangent at a point, and bounds a domain with $K>0$. We may choose local coordinates $(\hat{x}, \hat{y})$ so that the boundary $\partial\hat{\Omega}$ consists of graphs $\hat{y}= \pm h( \hat{x} )$. The cusp domain itself is then given by
$$
\hat{\Omega} = \{ \ (\hat{x} , \hat{y}) \ |  \ \ \ |\hat{y}| < h(\hat{x}) , \  0 < \hat{x} < 1 \ \}.
$$
Here we are tacitly assuming without loss of generality that the domain is symmetric with respect to the axis. The function $h$ will show up frequently as a weight function or as a parameter in a coordinate change, and if the boundary is not symmetric across the axis, then the same methods still hold after taking the maximum of the two sides instead.

We will follow the set up in \cite{MR2765727}. Namely, after expressing equation \eqref{a1} in the coordinates $(\hat{x},\hat{y})$, an approximate solution $z_0$ may be constructed by using Taylor's theorem, so that
$$
\det \mathrm{Hess}_g z_0-K(\det g)(1-|\nabla_g z_0|^2)
$$
vanishes to high order at the origin. A full solution of \eqref{a1} will then be sought in the form
$z=z_0 + \varepsilon^5 w, $ where $\varepsilon > 0 $ is a small parameter. Let
$$
\Phi(w):=\det \mathrm{Hess}_g z-K(\det g)(1-|\nabla_g z|^2),
$$
and
\begin{equation*}
\begin{pmatrix}
a^{11}&a^{12}\\
a^{12}&a^{22}
\end{pmatrix}\ =\
\begin{pmatrix}
\nabla_{22}z&-\nabla_{12}z\\
-\nabla_{12}z&\nabla_{11}z
\end{pmatrix},
\end{equation*}\\
then the choice of $z_{0}$ may be arranged so that
$$\lim_{|(\hat{x},\hat{y})|\rightarrow 0}a^{22}=1,\ \ \ \ \ \  a^{12}=O\Big(|(\hat{x},\hat{y})|\Big).$$\\

Next, rescale the coordinates by
$$(\hat{x}, \hat{y})=(\varepsilon^2 x, \varepsilon^2 y),$$
so that the linearized equation
$$\mathcal{L}(w)u:=\frac{d}{dt}\Phi(w+tu)\Big|_{t=0}$$\\
takes the form
\begin{equation}
\varepsilon^{-1}\mathcal{L}(w)u=a^{ij} \nabla_{ij}u+2\varepsilon^4K|g|\langle \nabla_g z, \nabla_g u \rangle, \label{a4}
\end{equation}
where $|g| = \text{det} g$ and $\nabla_{ij}$ denote covariant derivatives with respect to the rescaled coordinates.

In order to facilitate analysis of the linearized equation, it is helpful to eliminate the second order mixed derivative term. This may be accomplished by a further coordinate change to $(\bar{x},\bar{y})$, such that $\bar{y}=y$ and
\begin{equation*}
a^{12} \frac{\partial \bar{x}}{\partial x} + a^{22} \frac{\partial \bar{x}}{\partial y}=0.
\end{equation*}
This equation may be solved with the method of characteristics while imposing the following initial condition
\begin{equation*}
\bar{x}( x, 0) = x.
\end{equation*}
When expressed in these new coordinates, the linearized equation takes the so called canonical form that played an important role \cite{MR2765727}. We summarize the result in the following lemma.

\begin{lemma} \label{l1}
Let g $\in C^{m_0}$ and $w \in C^{\infty}$ with $|w|_{C^4} \le 1$. In a neighborhood of the origin there exists a local $C^{m_0-2}$ change of coordinates
\begin{equation*}
\bar{x}=\bar{x}(x, y), \ \ \ \ \ \  \bar{y}=y,
\end{equation*}
such that in these new coordinate system the linearization takes the form
$$
\varepsilon^{-1} \mathcal{L}(w)u=a^{22}L(w)u+(a^{22})^{-1} \Phi(w)[ \partial^2_{x}u - \partial_{x} \mathrm{log}(a^{22} \sqrt{|g|}) \partial_{x}u],
$$
where
\begin{equation}
L(w)u= \partial_{\bar{x}} (k \partial_{\bar{x}})u + \partial^2_{\bar{y}}u+ c \partial_{\bar{x}}u + d \partial_{\bar{y}} u, \label{a6}
\end{equation}
with
\begin{eqnarray*}
k &=& K \bar{k}(x,y,w, \nabla w,\nabla ^2 w, \nabla \bar{x}),\\
c &=& K \bar{c}(x,y,w, \nabla w,\nabla ^2 w, \nabla ^3 w, \nabla \bar{x}, \nabla ^2 \bar{x})+(a^{22})^{-2} \partial_{x} \Phi (w) \partial_{x} \bar{x},\\
d &=& \varepsilon^2 \bar{d}(x,y,w, \nabla w,\nabla ^2 w),
\end{eqnarray*}
for some $\bar{k}, \bar{c}, \bar{d} \in C^{m_0-4}$ such that $\bar{k}>1/2$ if $\varepsilon=\varepsilon(m)$ is chosen sufficiently small. Moreover, we have an estimate on the new coordinates
\begin{equation}
\| (\bar{x},\bar{y}) \|_{H^m(\Omega)} \le C_m ( 1+ \| w \| _{H^{m+4}(\Omega)}), \label{a7}
\end{equation}
with the constant $C_m$ independent of $\varepsilon$.
\end{lemma}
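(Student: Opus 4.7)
The plan is to construct the coordinate change $\bar{x}$ via the method of characteristics, substitute into (\ref{a4}) and carry out the resulting algebraic simplifications to read off $k$, $c$, and $d$, and finally to verify the tame estimate on $\bar{x}$. I would begin by solving the linear Cauchy problem
\[
a^{12}\bar{x}_x + a^{22}\bar{x}_y = 0, \qquad \bar{x}(x,0) = x,
\]
by the method of characteristics. Since $a^{22}\to 1$ near the origin, the characteristic vector field $X = a^{12}\partial_x + a^{22}\partial_y$ is transversal to $\{y=0\}$, so a unique local solution exists. Because $z = z_0+\varepsilon^5 w$ with $g\in C^{m_0}$ and $w\in C^\infty$, the coefficients $a^{ij}$ (built from the covariant Hessian of $z$) lie in $C^{m_0-2}$, and standard ODE regularity gives $\bar{x}\in C^{m_0-2}$. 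Since $\bar{x}_x = 1$ along $\{y=0\}$, the Jacobian remains nonvanishing in a neighborhood of the origin, and $(x,y)\mapsto(\bar{x},y)$ is a local $C^{m_0-2}$ diffeomorphism.

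Next, applying the chain rule to the principal part $a^{11}u_{xx}+2a^{12}u_{xy}+a^{22}u_{yy}$ of (\ref{a4}), the coefficient of $u_{\bar{x}\bar{y}}$ becomes $2(a^{12}\bar{x}_x+a^{22}\bar{x}_y)=0$ by construction, so the mixed derivative drops out. Using the identity $\det\mathrm{Hess}_g z = K|g|(1-|\nabla_g z|^2)+\Phi(w)$, I would split $a^{11}$ into a ``$K$-piece'' and a ``$\Phi(w)$-piece''; the latter stays in the original $(x,y)$ variables and, after being put in divergence form using the volume element $a^{22}\sqrt{|g|}$, gives the error term $(a^{22})^{-1}\Phi(w)[\partial_x^2 u - \partial_x\log(a^{22}\sqrt{|g|})\partial_x u]$. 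The $K$-piece furnishes the principal coefficient
\[
k \;=\; \frac{\bar{x}_x^{2}\,|g|\,(1-|\nabla_g z|^2)}{(a^{22})^2}\,K \;=:\; K\bar{k},
\]
and $\bar{k}>1/2$ for $\varepsilon$ small since every factor is close to $1$ at the origin. Rewriting the $u_{\bar{x}\bar{x}}$-term in self-adjoint form $\partial_{\bar{x}}(k\partial_{\bar{x}})u$ and absorbing the leftover first-order contributions—$\partial_{\bar{x}}k$, Christoffel symbols of $\nabla_{ij}$, the transport term $2\varepsilon^{4}K|g|\langle\nabla_g z,\nabla_g u\rangle$, and the $\partial_x\Phi(w)\partial_x\bar{x}$ correction—into $c$ and $d$ yields the stated structure. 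The $\varepsilon^2$ prefactor in $d$ reflects that, after the rescaling $(\hat{x},\hat{y})=(\varepsilon^2 x,\varepsilon^2 y)$, the contribution of $\nabla_g z$ to the coefficient of $\partial_{\bar{y}}u$ carries an $O(\varepsilon^2)$ factor.

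For the tame bound (\ref{a7}), I would view $\bar{x}$ as the value at parameter $y$ of the flow of $X/a^{22}$ starting from $(x,0)$, differentiate the characteristic ODE $m$ times in the spatial variable, and run a standard Moser iteration. Because $a^{ij}$ involves two derivatives of $w$ and Sobolev embedding consumes another two to estimate the $L^\infty$ factors that appear in the product rule, one obtains $\|(\bar{x},\bar{y})\|_{H^m(\Omega)}\le C_m(1+\|w\|_{H^{m+4}(\Omega)})$ with $C_m$ independent of $\varepsilon$. I expect the main obstacle to be precisely this Moser bookkeeping—distributing derivatives correctly in the product rule and verifying that the shift by $4$ really suffices—while the algebraic identification of $k$, $c$, $d$ and the positivity $\bar{k}>1/2$ are mechanical once the characteristic coordinate $\bar{x}$ has been constructed.
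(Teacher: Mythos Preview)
Your proposal is correct and follows essentially the same approach as the paper: the paper introduces $\bar{x}$ as the solution of $a^{12}\bar{x}_x+a^{22}\bar{x}_y=0$ with $\bar{x}(x,0)=x$ via the method of characteristics, and then cites \cite{MR2765727} for the algebraic derivation of the canonical form and the tame estimate, which is precisely what you outline. The only thing to note is that the paper does not actually supply a self-contained proof of this lemma---it states the result as a summary of \cite{MR2765727}---so your write-up is in fact more detailed than what appears here.
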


We remark that the intersection of the original domain $\hat{\Omega}$ with a small neighborhood of the origin may be written in the new coordinates as
$$
\bar{\Omega}= \{ \ (\bar{x},\bar{y})\ | \ \ \ |\bar{y}| <\bar{h}(\bar{x}) \ , \  0 <\bar{x} < 1 \ \}.
$$
Here $\bar{h}$ is a rescaling of the initial graph $h$ by $\varepsilon^{-2}$. It is important to note that even with this rescaling of the graph, the condition (\ref{a0.5}) still holds in the new coordinates with a constant that is independent of $\varepsilon$. Since this is the primary hypothesis needed to treat the elliptic regions, and it is not affected by the rescaled graph, we will in what follows continue to denote the graph by $h$ instead of $\bar{h}$ for simplicity.

In conclusion we have defined three different coordinate systems which culminated in a rescaling of the domain, and a canonical form for the linearization. In all of these coordinates the geometry of the domain remains a cusp. However, in the next section, in order to facilitate analysis of the linearized equation, we will open up the cusp by another set of coordinates $(\tilde{x}, \tilde{y})$, in which the domain is transformed into an infinite cylinder denoted by $\tilde{\Omega}$.


\section{Linear theory in elliptic cusps}\label{s2}
According to our assumption that $K>0$ in $\bar{\Omega}$ and vanishes along $\partial \bar{\Omega}$, the linearized equation (\ref{a6}) is of degenerate elliptic type. We need to establish existence and to prove appropriate energy estimates. The framework of the computation is from \cite{MR2765727}, where the authors dealt with a sector domain. It is well known that in a sector domain, the regularity of elliptic equations depends on the angle of the sector. Smaller angles yield higher regularity. Since in \cite{MR2765727} angles are not necessarily small, the authors needed to use the degeneracy to overcome this problem. Here we will use an extra coordinate change and exploit degeneracy more directly to derive an easier computation. However it turns out that the use of degeneracy is not necessary in the case of cusp domains, because the angle of a cusp zero. Nevertheless we will use the degeneracy anyway, but point out how it can be replaced by the zero angle in the computation.

First, we make a coordinate change by
\begin{equation*}
\tilde{x}=\int_{\bar{x}}^1 \frac{1}{h(t)}dt \text{,} \ \ \ \ \ \ \ \tilde{y}=\frac{\bar{y}}{h(\bar{x})},
\end{equation*}
to bring $\bar{\Omega}$ to
\begin{equation*}
\tilde{\Omega} = \{ \ (\tilde{x},\tilde{y}) \ | \ \ \ \tilde{x} \ge 0, \ -1 \le \tilde{y} \le 1 \ \}.
\end{equation*}
The Jacobian of the coordinate change is\\
\begin{equation*}
\begin{pmatrix}
\frac{\partial \tilde{x}}{\partial \bar{x}}&\frac{\partial \tilde{x}}{\partial \bar{y}}\\
\frac{\partial \tilde{y}}{\partial \bar{x}}&\frac{\partial \tilde{y}}{\partial \bar{y}}
\end{pmatrix}\ =\
\begin{pmatrix}
\frac{-1}{h}&0\\
\frac{-\tilde{y}h'}{h}&\frac{1}{h}
\end{pmatrix},
\end{equation*}
\begin{equation*}
\begin{pmatrix}
\frac{\partial \bar{x}}{\partial \tilde{x}}&\frac{\partial \bar{x}}{\partial \tilde{y}}\\
\frac{\partial \bar{y}}{\partial \tilde{x}}&\frac{\partial \bar{y}}{\partial \tilde{y}}
\end{pmatrix}\ =\
\begin{pmatrix}
-h&0\\
-\tilde{y}h'&h
\end{pmatrix},
\end{equation*}
and the equation (\ref{a6}) becomes
\begin{equation} \label{a8}
L(w)u = \partial_{\tilde{x}}(\mathcal{K} \partial_{\tilde{x}} u) + \partial_{\tilde{x}}(\mathcal{A} \partial_{\tilde{y}} u) + \partial_{\tilde{y}}(\mathcal{A} \partial_{\tilde{x}} u) + \partial_{\tilde{y}}(\mathcal{B} \partial_{\tilde{y}} u) + \mathcal{C} \partial_{\tilde{x}} u + \mathcal{D} \partial_{\tilde{y}} u,
\end{equation}
with
\begin{eqnarray*}
\mathcal{K} &=& \frac{k}{h^2}, \\
\mathcal{A} &=& \frac{k\tilde{y} h'}{h^2},\\
\mathcal{B} &=& \frac{1}{h^2}+k(\frac{\tilde{y} h'}{h})^2,\\
\mathcal{C} &=& \frac{-c}{h}-\frac{2k h'}{h^2},\\
\mathcal{D} &=& \frac{d}{h}-\frac{c\tilde{y} h'}{h^2}-\frac{2k\tilde{y} h'}{h^2}.
\end{eqnarray*}

We remark here that $h= h(\bar{x}(\tilde{x}))$. An important consequence of this is:
\begin{lemma} \label{l0}
For all nonnegative integer $m$ and real number $\tau$, we have
$$\frac{d^m h^{\tau}}{d\tilde{x}^m} = O(h^{\tau}).$$
\end{lemma}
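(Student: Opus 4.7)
The plan is to prove this by induction on $m$, exploiting the fact that the coordinate change gives $\frac{d\bar{x}}{d\tilde{x}} = -h$, so that differentiation in $\tilde{x}$ is
$$\frac{d}{d\tilde{x}} \;=\; -h\,\frac{d}{d\bar{x}},$$
and then tracking carefully how the growth restrictions (\ref{a0.5}) interact with repeated application of this operator.

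The key observation is to introduce the dimensionless auxiliary quantities
$$G_l \;:=\; h^{\,l-1}h^{(l)}, \qquad 1\le l\le m_1,$$
each of which is uniformly bounded by hypothesis (\ref{a0.5}). I would strengthen the lemma to the following inductive statement: for every integer $m$ with $0\le m\le m_1$ and every real $\tau$,
$$\frac{d^m h^{\tau}}{d\tilde{x}^m} \;=\; h^{\tau}\,F_m(\tau;G_1,\dots,G_m),$$
where $F_m$ is a universal polynomial in $\tau$ and the $G_l$'s whose coefficients do not depend on $h$. The $O(h^{\tau})$ conclusion then follows immediately from the uniform boundedness of the $G_l$'s.

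The base case $m=0$ is trivial. For the inductive step, applying $\frac{d}{d\tilde{x}}=-h\frac{d}{d\bar{x}}$ to $h^{\tau}F_m$ yields
$$\frac{d^{m+1}h^{\tau}}{d\tilde{x}^{m+1}} \;=\; -\tau h^{\tau}G_1 F_m \;-\; h^{\tau+1}\,\frac{dF_m}{d\bar{x}}.$$
The first term is already in the required form. For the second, a direct computation gives the recursion
$$h\,\frac{dG_l}{d\bar{x}} \;=\; (l-1)\,G_1 G_l \;+\; G_{l+1},$$
so that $h\cdot dF_m/d\bar{x}$ is again a polynomial in $G_1,\dots,G_{m+1}$. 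This closes the induction.

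There is no real obstacle here: once the correct dimensionless quantities $G_l$ are isolated, the argument reduces to a clean formal manipulation, and hypothesis (\ref{a0.5}) is precisely what converts the resulting $G_l$-polynomial into the $O(h^{\tau})$ bound. The one point deserving explicit attention is the implicit restriction $m\le m_1$ inherited from the available regularity of $h$, which should be mentioned when the lemma is applied in the sequel.
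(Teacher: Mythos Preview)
Your proof is correct and follows essentially the same approach as the paper: both use the chain rule identity $\frac{d}{d\tilde{x}}=-h\,\frac{d}{d\bar{x}}$ together with hypothesis (\ref{a0.5}) and induction on $m$. The paper's own argument is terser---it verifies the case $m=1$, $\tau=1$ and then simply declares ``a simple induction yields the desired result''---whereas you have carried out that induction explicitly by introducing the bounded quantities $G_l=h^{l-1}h^{(l)}$ and tracking the polynomial structure of $\frac{d^m h^\tau}{d\tilde{x}^m}/h^\tau$; this is a clean way to make the induction rigorous and your recursion $h\,dG_l/d\bar{x}=(l-1)G_1G_l+G_{l+1}$ is exactly what is needed.
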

\begin{proof}
Notice that
$$
\frac{dh(\bar{x})}{d\tilde{x}}=h' \frac{d\bar{x}}{d\tilde{x}},
$$
and
$$
\frac{\partial \bar{x}}{\partial \tilde{x}}=-h(\bar{x}),
$$
so by (\ref{a0.5}) the inequality is true for $m=1$ and $\tau=1$. A simple induction yields the desired result.
\end{proof}
 We also remark that we merely used the boundedness of (\ref{a0.5}) for the proof above. In a cusp domain we actually have
$$
\Big|h^{(l)}\cdot h^{l-1}\Big| \rightarrow 0 \ \ \ \ \text{as} \  \varepsilon \rightarrow 0, \ \ \ \text{ for } l\ge1,
$$
and therefore
\begin{equation} \label{a9}
\Big|\frac{d^l h^{\tau}(\bar{x})}{d\tilde{x}^l} h^{-\tau}(\bar{x}) \Big| \rightarrow 0 \ \ \ \ \text{as} \ \varepsilon \rightarrow 0,\ \ \ \text{ for }l\ge1.
\end{equation}
This can be used to deal with non-degenerate cases.

It will be convenient to cut off the coefficients away from the infinity of the $(\tilde{x}, \tilde{y})$ plane. Let $\varphi \in C^{\infty}([0,\infty))$ be a nonnegative cut-off functioin with
\begin{equation*}
\varphi(\tilde{x})=
\begin{cases}
1, \ & \text{if } \tilde{x} \ge 2,\\
0, & \text{if } \tilde{x} \le 1,
\end{cases}
\end{equation*}
and define
\begin{equation} \label{a10}
Lu= \partial_{\tilde{x}}(\bar{K} \partial_{\tilde{x}} u) + \partial_{\tilde{x}}(\bar{A} \partial_{\tilde{y}} u) + \partial_{\tilde{y}}(\bar{A} \partial_{\tilde{x}} u) + \partial_{\tilde{y}}(\bar{B} \partial_{\tilde{y}} u) + \bar{C} \partial_{\tilde{x}} u + \bar{D} \partial_{\tilde{y}} u,
\end{equation}
where
\begin{equation*}
\bar{K}=\varphi^2 \mathcal{K},\  \bar{A}=\varphi \mathcal{A}, \  \bar{B} = \mathcal{B}, \  \bar{C}=\varphi \mathcal{C}, \  \bar{D}=\varphi \mathcal{D}.
\end{equation*}
We will study the boundary value problem:
\begin{equation}\label{a11}
 Lu=f \text{ in } \tilde{\Omega}, \ \ \ \ \ \ u(\tilde{x},1)=u(\tilde{x},-1)=0.
\end{equation}
Since there is ${1}/{h}$ in the coefficient of $L$, which blows up when $\tilde{x}$ approaches infinity, we need the solution to vanish at the infinity. It will be convenient to define the following weighted Sobolev norm to control the rate at which the functions vanish:

\begin{equation*}
\|u\|^2_{(m,l,\gamma)}=\sum_{\substack{s \le m, \ t \le l \\ t+s \le \text{max}(m,l)}} \int_{\tilde{\Omega}} \lambda^{-s} |h^{-\gamma}\ \partial_{\tilde{x}}^s \partial^t_{\tilde{y}} u|^2,
\end{equation*}
where $\lambda$ is a large parameter. Let $\overline{C}^{\infty}(\tilde{\Omega})$ be the space of the smooth functions that vanish for $\tilde{x}$ large and $H^{(m,l,\gamma)}(\tilde{\Omega})$ be its closure with respect to the norm defined above. For all the weighted norms in this paper, if $m=l$ we will drop the $l$. We also let $\hat{C}^{\infty}(\tilde{\Omega})$ denote the space of $\overline{C}^{\infty}(\tilde{\Omega})$ functions $v$ satisfying $v(\tilde{x},1)=v(\tilde{x},-1)=0$. For every $f\in H^{(m,l,\gamma)}(\tilde{\Omega})$, we search for a weak solution $u\in H^{(m,l,\gamma)}(\tilde{\Omega})$ satisfying
\begin{equation} \label{a11.5}
(u,L^*v)=(f,v) \quad v\in \hat{C}^{\infty}(\tilde{\Omega}),
\end{equation}
where $(\cdot,\cdot)$ is the $L^2(\tilde{\Omega})$ inner product and $L^*$ is the formal adjoint of $L$. We first solve an auxiliary ODE.
\begin{lemma} \label{l2}
For every $v\in \hat{C}^{\infty}$, there exists a unique solution $\xi \in H^{(2m,\infty,\gamma)}(\tilde{\Omega}) \cap C^{\infty}(\tilde{\Omega})$ of the ODE:\\
\begin{eqnarray}
&&\sum_{s=0}^m \lambda^{-s} \partial^s_{\tilde{x}}( \frac{\tilde{y}^2-2}{h^{2\gamma}} \partial_{\tilde{x}}^s \xi) = v,\\
&&\xi(\tilde{x},-1)=\xi(\tilde{x},1)=0, \quad \partial_{\tilde{x}}^s \xi(0,\tilde{y})=0, \quad 0\le s\le m-1, \nonumber \\
&&h^{-2\gamma}(\bar{x}_0)\int_{\tilde{x}=\tilde{x}_0}(\partial_{\tilde{x}}^s \partial_{\tilde{y}}^t \xi)^2 < \infty, \quad 0 \le s \le 2m-1, \quad t < \infty. \nonumber
\end{eqnarray}
\end{lemma}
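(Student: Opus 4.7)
The equation is a linear ODE in $\tilde{x}$ of order $2m$ with $\tilde{y}\in[-1,1]$ entering only as a parameter, through the factor $\tilde{y}^2-2$ (which satisfies $\tilde{y}^2-2\le -1$) and through the right-hand side $v$. Since the top-order coefficient $\lambda^{-m}(\tilde{y}^2-2)h^{-2\gamma}$ is nonvanishing on every bounded $\tilde{x}$-interval, the ODE is regular there. The plan is to first solve a two-point boundary value problem on $[0,R]$ and then pass to the limit $R\to\infty$ using uniform weighted estimates.

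For fixed $R$, I augment the given initial conditions $\partial^s_{\tilde{x}}\xi(0,\tilde{y})=0$ for $0\le s\le m-1$ with symmetric homogeneous conditions $\partial^s_{\tilde{x}}\xi(R,\tilde{y})=0$ for $0\le s\le m-1$, producing a formally self-adjoint two-point BVP of order $2m$. Writing $W=(\tilde{y}^2-2)h^{-2\gamma}$, the operator $\mathcal{M}=\sum_{s=0}^m\lambda^{-s}\partial_{\tilde{x}}^s(W\partial_{\tilde{x}}^s)$ has its top-order coefficient $\lambda^{-m}W$ of definite sign, so the BVP is a regular $2m$-th order elliptic problem on $[0,R]$, and is therefore Fredholm of index zero. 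Choosing $\lambda$ large enough to push the discrete kernel away from zero, the Fredholm alternative yields a unique solution $\xi_R$.

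The core step is a weighted a priori bound uniform in $R$. Testing the equation against $\xi_R$ and integrating by parts $s$ times (the boundary terms vanish thanks to the $m$-fold vanishing of $\xi_R$ at both endpoints) produces
\begin{equation*}
\sum_{s=0}^m(-1)^s\lambda^{-s}\int_0^R\int_{-1}^1 W\,(\partial_{\tilde{x}}^s\xi_R)^2\,d\tilde{y}\,d\tilde{x}=\int_0^R\int_{-1}^1 v\,\xi_R\,d\tilde{y}\,d\tilde{x}.
\end{equation*}
Combining this with analogous identities obtained by testing $\partial_{\tilde{x}}^{s'}$ of the equation against $\partial_{\tilde{y}}^{2t}\xi_R$, using $\tilde{y}^2-2\le -1$, invoking \lemmaref{l0} to handle derivatives of $h^{-2\gamma}$, and taking $\lambda$ sufficiently large to absorb the indefinite-sign contributions, yields
\begin{equation*}
\|\xi_R\|_{(2m,\infty,\gamma)}\le C\|v\|_{(\cdot,\cdot,\gamma)},
\end{equation*}
with $C$ independent of $R$. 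Weak compactness then supplies a limit $\xi\in H^{(2m,\infty,\gamma)}(\tilde{\Omega})$ solving the full problem, with the initial and lateral boundary conditions passing to the limit. Smoothness of $\xi$ in $\tilde{x}$ comes from the non-degenerate ODE, and smoothness in $\tilde{y}$ from smooth parameter dependence; the finiteness of the cross-sectional integrals is then automatic from $\xi\in C^\infty$ and the boundedness of $\tilde{y}$.

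The main obstacle is that $(\mathcal{M}\xi,\xi)$ produces an alternating-sign sum of weighted squares rather than a directly coercive expression, so the uniform a priori estimate does not follow from a naive energy argument. Closing the chain requires exploiting the growth of the weight $h^{-2\gamma}$ as $\tilde{x}\to\infty$, which penalizes non-decaying modes, together with the largeness of $\lambda$ to absorb lower-order, wrong-signed contributions into the top-order term.
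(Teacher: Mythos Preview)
Your exhaustion-by-$[0,R]$ route can be made to work, but the step you flag as ``the main obstacle'' is in fact a genuine gap as written. Once you integrate by parts and get
\[
\sum_{s=0}^m(-1)^s\lambda^{-s}\int W\,(\partial_{\tilde{x}}^s\xi_R)^2=\int v\,\xi_R,\qquad W=(\tilde{y}^2-2)h^{-2\gamma}<0,
\]
the signs alternate term by term, and your proposed remedy---take $\lambda$ large and absorb the lower-order wrong-signed pieces into the top-order one---runs in the wrong direction: the coefficient of the $s$-th term is $\lambda^{-s}$, so large $\lambda$ \emph{suppresses} the top-order term and makes the $s=0$ term dominant. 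No choice of $\lambda$ makes this sum coercive, and the weight growth $h^{-2\gamma}\to\infty$ does not help either, since it multiplies every term equally. Consequently the uniform-in-$R$ bound, and hence the weak-compactness limit, are not justified.

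The resolution is that the statement carries a sign typo: in the paper's own proof (and in the later application in \theoremref{t1}) the operator is written with a factor $(-1)^s$, i.e.\ $\sum_{s=0}^m(-1)^s\lambda^{-s}\partial_{\tilde{x}}^s(W\partial_{\tilde{x}}^s\xi)=v$. With that sign, testing against $-\xi$ gives exactly
\[
\sum_{s=0}^m\lambda^{-s}\int (2-\tilde{y}^2)\,h^{-2\gamma}(\partial_{\tilde{x}}^s\xi)^2=-\int v\,\xi,
\]
which is nothing but the squared $H_0^{(m,0,\gamma)}$-norm of $\xi$ (up to the bounded factor $2-\tilde{y}^2\in[1,2]$). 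The bilinear form is therefore coercive on $H_0^{(m,0,\gamma)}(\tilde{\Omega})$, and the paper simply invokes the Riesz representation theorem on the full infinite strip---no truncation to $[0,R]$, no Fredholm argument, no limit. Higher $\tilde{x}$-regularity up to $H^{(2m,0,\gamma)}$ is then bootstrapped by testing the equation against $\rho_{\tilde{x}_0}^{2m}\partial_{\tilde{x}}^{2\tau}\xi$ with a spatial cutoff $\rho_{\tilde{x}_0}$, $\tilde{y}$-regularity by differentiating the equation in $\tilde{y}$, and the cross-sectional decay condition follows from the trace theorem applied to $h^{-\gamma}\partial_{\tilde{x}}^s\partial_{\tilde{y}}^t\xi\in H^m(\tilde{\Omega})$. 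If you reinstate the $(-1)^s$, your truncated-domain argument also goes through, but it is then strictly more work than the direct variational approach.
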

\begin{proof}
Denote the completion, with respect to $\| \cdot \|_{(m,0,\gamma)}$, of $C^{\infty}(\tilde{\Omega})$ functions with compact support by $H_0^{(m,0,\gamma)}(\tilde{\Omega})$. We first obtain a weak solution in $H_0^{(m,0,\gamma)}(\tilde{\Omega})$ by the Riesz representation theorem, so the desired boundary behavior at $\{\tilde{y}=\pm 1\}$ and $\{\tilde{x}=0\}$ follows.
Then we define
$$
\rho_{\tilde{x}_0}(\tilde{x})=
\begin{cases}
1, & \text{if } \tilde{x}\le \tilde{x}_0,\\
0, & \text{if } \tilde{x}\ge \tilde{x}_0+1.
\end{cases}
$$
and integrate the following by parts:
\begin{equation*}
\int_{\tilde{\Omega}_{3\tilde{x}_0}}\Big( v- \sum_{s=0}^m(-1)^s \lambda^{(-s)}\partial^s_{\tilde{x}} (\frac{\tilde{y}^2-2}{h^{2\gamma}} \partial^s_{\tilde{x}} \xi)\Big) \rho^{2m}_{\tilde{x}_0} \partial_{\tilde{x}}^{2 \tau} \xi = 0,
\end{equation*}
where $\tilde{\Omega}_{\tilde{x}_0}$ is defined to be $\{(\tilde{x},\tilde{y})\ |\ 0 \le \tilde{x} \le \tilde{x}_0, -1 \le \tilde{y} \le 1 \}$. For each $1 \le \tau \le m$, we integrate by parts so that the highest order derivative on $\xi$ is $m+ \tau$. We keep only the highest derivative on the left hand side of the equality and the remaining terms on the right hand side. Notice that the boundary integrals are either zero or independent of $\tilde{x}_0$, and the derivatives of $\rho_{\tilde{x}_0}$ is bounded by a constant independent of $\tilde{x}_0$. Therefore, by taking limit as $\tilde{x}_0$ goes to infinity and applying a simple induction on $\tau$, we arrive at $\xi \in H^{(2m,0,\gamma)}(\tilde{\Omega})$. This is actually the standard regularity estimate for elliptic equations. Differentiating the equation with respect to $\tilde{y}$ and repeating the argument above yields $\xi \in H^{(2m,l,\gamma)}(\tilde{\Omega})$ for arbitrary $l \ge 1$. $\xi \in C^{\infty}(\tilde{\Omega})$ is a corollary of the Sobolev embedding theorem and regularity of ODE.

Lastly, consider $h^{-\gamma} \partial_{\tilde{x}}^s \partial_{\tilde{y}}^t \xi \in H^m(\tilde{\Omega})$, where $H^m(\tilde{\Omega})$ is the usual Sobolev space. The vanishing of the integral along $\tilde{x}=\tilde{x}_0$ follows directly from the trace theorem in the Sobolev space.
\end{proof}

The next lemma is the main tool for all the energy estimates in this section.
\begin{lemma} \label{l3}
Suppose that $|w|_{C^4(\Omega)}<1$ and let $u\in C^2(\tilde{\Omega})$ with $u(\tilde{x},1)=u(\tilde{x},-1)=0$. If $\varepsilon$ is sufficiently small and
\begin{equation} \label{a12}
h^{-2\gamma-2}(\bar{x}_0)\int_{\tilde{x}=\tilde{x}_0} (\partial_{\tilde{x}}^su)^2 <C <\infty,\quad  \text{for } s=0,1.
\end{equation}
Then
$$
\int_{\tilde{\Omega}}Lu \frac{\tilde{y}^2-2}{h^{2\gamma}}u \ge  \int_{\tilde{\Omega}} \frac{(\partial_{\tilde{y}}u)^2}{h^{2\gamma+2}}+\frac{u^2}{h^{2\gamma+2}}.
$$
\end{lemma}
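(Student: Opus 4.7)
The strategy is the classical multiplier method: test the equation against $\mu u$, where $\mu:=(\tilde{y}^2-2)/h^{2\gamma}$ is a negative weight satisfying $-2 h^{-2\gamma}\le\mu\le -h^{-2\gamma}$ on $\tilde\Omega$, and then integrate by parts. I first rewrite $L$ in the compact form $Lu=\partial_i(a^{ij}\partial_j u)+b^i\partial_i u$ with $(a^{ij})=\bigl(\begin{smallmatrix}\bar K&\bar A\\\bar A&\bar B\end{smallmatrix}\bigr)$, $b^1=\bar C$, $b^2=\bar D$. Integration by parts in $\tilde{x}$ and $\tilde{y}$ produces a principal term $-\int \mu\,Q$ with $Q:=a^{ij}\partial_i u\,\partial_j u$, several \emph{crossed} terms of the schematic form $-\int a^{ij}\,u\,\partial_i u\,\partial_j\mu$, and two \emph{zeroth order} terms $-\tfrac{1}{2}\int\partial_{\tilde x}(\bar C\mu)\,u^2-\tfrac{1}{2}\int\partial_{\tilde y}(\bar D\mu)\,u^2$. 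The boundary contributions at $\tilde{y}=\pm 1$ vanish by the Dirichlet condition; those at $\tilde{x}=0$ vanish because $\varphi$ kills $\bar K,\bar A,\bar C,\bar D$ there (only $\bar B=h^{-2}+k(\tilde yh'/h)^2$ survives, but it is paired with $\partial_{\tilde y}$-integration); those at $\tilde{x}=\infty$ are killed by the decay hypothesis \eqref{a12} together with the explicit $h$-dependence of each coefficient.

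The key algebraic step is to complete the square in $Q$. A direct computation using the formulas for $\bar K,\bar A,\bar B$ gives
\begin{equation*}
Q\;=\;\frac{k}{h^2}\bigl(\varphi\,\partial_{\tilde x} u+\tilde y h'\,\partial_{\tilde y} u\bigr)^2+\frac{1}{h^2}(\partial_{\tilde y} u)^2,
\end{equation*}
and since $-\mu\ge h^{-2\gamma}$ this yields the crucial bound
\begin{equation*}
-\mu Q\;\ge\;\frac{(\partial_{\tilde y} u)^2}{h^{2\gamma+2}}+\frac{k}{h^{2\gamma+2}}\bigl(\varphi\,\partial_{\tilde x} u+\tilde y h'\,\partial_{\tilde y} u\bigr)^2,
\end{equation*}
producing the first term on the right hand side of the inequality. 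The $u^2$-term is recovered from the Term-4 crossed contribution $-\int\bar B\,u\,\partial_{\tilde y} u\,\partial_{\tilde y}\mu$: writing $u\,\partial_{\tilde y} u=\tfrac{1}{2}\partial_{\tilde y}(u^2)$ and integrating by parts once more produces $\tfrac{1}{2}\int\partial_{\tilde y}(\bar B\,\partial_{\tilde y}\mu)\,u^2$. Since $\partial_{\tilde y}\mu=2\tilde y\,h^{-2\gamma}$ and $\bar B=h^{-2}+k(\tilde yh'/h)^2$, an elementary computation gives $\tfrac{1}{2}\partial_{\tilde y}(\bar B\,\partial_{\tilde y}\mu)=h^{-2\gamma-2}+O(|h'|^2)h^{-2\gamma-2}$, and the error is negligible by \eqref{a9}.

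It remains to absorb the leftover error terms. By direct calculation $\partial_{\tilde x}\mu=2\gamma\mu\,h'$, so every error term carries at least one small factor: either $h'$ (via $\partial_{\tilde x}\mu$), or $k$ (which is $O(K)$ and vanishes on $\partial\bar\Omega$, hence is small since $\varphi$ restricts to where $h$ is small), or $\varepsilon^2$ (from the coefficient $d$ in $\bar D$), or $h'^2$. All such prefactors can be made arbitrarily small by choosing $\varepsilon$ small, thanks to \eqref{a9}. A Cauchy--Schwarz splitting then dominates each crossed term by a small multiple of the two good terms: terms involving $\partial_{\tilde y} u$ and $u^2$ are absorbed directly by the principal terms, while the $\partial_{\tilde x} u$ cross terms are first rewritten using $\varphi\,\partial_{\tilde x} u=(\varphi\,\partial_{\tilde x} u+\tilde y h'\,\partial_{\tilde y} u)-\tilde y h'\,\partial_{\tilde y} u$, so that the first piece is absorbed by the remainder $\tfrac{k}{h^{2\gamma+2}}(\varphi\,\partial_{\tilde x} u+\tilde y h'\,\partial_{\tilde y} u)^2$ from the completion of square, and the second by the $(\partial_{\tilde y} u)^2$-term with the small coefficient $h'$.

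The main obstacle is precisely the bookkeeping at the last step: one must verify that the completion-of-square remainder from the principal part is genuinely available to control the $\partial_{\tilde x} u$-crossed terms coming from $\bar K$ and $\bar A$, and that the cancellation of $\tilde y h'\,\partial_{\tilde y} u$ contributions does not re-introduce uncontrolled $\partial_{\tilde y} u$ terms. This works because the remainder quadratic form already contains exactly the combination $\varphi\,\partial_{\tilde x} u+\tilde y h'\,\partial_{\tilde y} u$ that appears in the Hessian of the cusp transformation, which is also precisely the combination in which the cross terms naturally arrange themselves. The smallness of $|h'|$ in a cusp (as expressed in \eqref{a9}) is what ultimately makes the whole scheme close and yields the stated lower bound.
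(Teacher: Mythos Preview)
Your approach is essentially the same as the paper's: test against the weight $\mu=(\tilde y^2-2)h^{-2\gamma}$, integrate by parts, complete the square in the principal quadratic form to isolate $h^{-2\gamma-2}(\partial_{\tilde y}u)^2$, and extract the $u^2$ term from $\tfrac12\partial_{\tilde y}(\bar B\,\partial_{\tilde y}\mu)$. The only substantive difference is that the paper integrates \emph{all} the cross terms $u\,\partial_i u$ by parts a second time to obtain a single expression $\tfrac12\int u^2[\cdots]$ whose coefficient is then shown to be $\ge(2-o(\varepsilon))h^{-2\gamma-2}$, whereas you keep some cross terms and absorb them by Cauchy--Schwarz against the completion-of-square remainder; both routes close.

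One point of imprecision: your justification that $k$ is small because ``$\varphi$ restricts to where $h$ is small'' is not the correct mechanism. The cutoff $\varphi$ vanishes near $\tilde x=0$ (i.e.\ near $\bar x=1$), which is the \emph{far} end of the cusp, not the tip. The smallness of $k=K\bar k$ comes instead from the $\varepsilon$-rescaling together with $K(0)=0$: in the rescaled coordinates the entire domain sits in an $O(\varepsilon^2)$-neighborhood of the origin, so $K=O(\varepsilon^2)$ uniformly. This is what the paper calls the ``degeneracy'' of $k$, and it is also needed (beyond \eqref{a12}) to kill the $\tilde x\to\infty$ boundary term $a\bar K\,u\,\partial_{\tilde x}u$, since the $h$-powers there cancel exactly and one is left with a factor $k\to 0$.
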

\begin{proof}
For any functions $a\in C^{\infty}(\tilde{\Omega})$, integrating by part yields
\begin{eqnarray*}
\int_{\tilde{\Omega}_{\tilde{x}_0}} au Lu &=& - \int_{\tilde{\Omega}{\tilde{x}_0}}a[\bar{K}(\partial_{\tilde{x}}u)^2+2\bar{A}\partial_{\tilde{x}}u \partial_{\tilde{y}}u +\bar{B}(\partial_{\tilde{y}} u)^2]\\
&&+\int_{\tilde{\Omega}_{\tilde{x}_0}} \frac{u^2}{2} [\partial_{\tilde{x}} (\bar{K} \partial_{\tilde{x}}a)+ \partial_{\tilde{x}}( \bar{A} \partial_{\tilde{y}} a) + \partial_{\tilde{x}\tilde{y}}(\bar{A}a)\\
&& \quad \quad \quad \quad -\partial_{\tilde{y}}(\partial_{\tilde{x}} \bar{A}a) + \partial_{\tilde{y}}(\bar{B}\partial_{\tilde{y}}a)- \partial_{\tilde{x}}( \bar{C}a)- \partial_{\tilde{y}}( \bar{D}a)]\\
&&-\int_{\partial \tilde{\Omega}_{\tilde{x}_0}} \frac{u^2}{2} [\bar{K} \partial_{\tilde{x}}a \nu^1 +\bar{A} \partial_{\tilde{y}} a \nu^1 + \partial_{\tilde{y}}( \bar{A}a) \nu^1\\
&& \quad \quad \quad \quad \quad -\partial_{\tilde{x}} \bar{A}a \nu^2 +\bar{B} \partial_{\tilde{y}}a \nu^2 -\bar{C}a \nu^1- \bar{D}a \nu^2]\\
&& +\int_{\partial \tilde{\Omega}_{\tilde{x}_0}} a[\bar{K}\partial_{\tilde{x}}uu \nu^1+2 \bar{A} \partial_{\tilde{x}}uu \nu^2+ \bar{B}\partial_{\tilde{y}}uu \nu^2].
\end{eqnarray*}
Letting $a= \frac{\tilde{y}^2-2}{h^{2\gamma}}$, then the first integral becomes:
\begin{eqnarray*}
&&-\int_{\tilde{\Omega}_{\tilde{x}_0}} \frac{\tilde{y}^2-2}{h^{2\gamma+2}}k[\varphi^2 (\partial_{\tilde{x}}u)^2+2\varphi \tilde{y} h' \partial_{\tilde{x}}u \partial_{\tilde{y}}u +(\tilde{y}h')^2 (\partial_{\tilde{y}}u)^2]+ \frac{\tilde{y}^2-2}{h^{2\gamma+2}}(\partial_{\tilde{y}}^2 u)^2\\
&= &-\int_{\tilde{\Omega}_{\tilde{x}_0}} \frac{\tilde{y}^2-2}{h^{2\gamma+2}}k[ \varphi (\partial_{\tilde{x}} u) + \tilde{y}h'(\partial_{\tilde{y}} u)]^2-\int_{\tilde{\Omega}_{\tilde{x}_0}} \frac{\tilde{y}^2-2}{h^{2\gamma+2}}(\partial_{\tilde{y}}u)^2\\
&\ge & \int_{\tilde{\Omega}_{\tilde{x}_0}}\frac{(\partial_{\tilde{y}}u)^2}{h^{2\gamma+2}}.
\end{eqnarray*}
By the degeneracy of $k$ and the fact that $D=O(\varepsilon^2)$ we have
$$
|\partial_{\tilde{x}} (\bar{K} \partial_{\tilde{x}}a)|+ |\partial_{\tilde{x}}( \bar{A} \partial_{\tilde{y}} a)| + |\partial_{\tilde{x}\tilde{y}}(\bar{A}a)|-|\partial_{\tilde{y}}(\partial_{\tilde{x}} \bar{A}a)| - |\partial_{\tilde{x}}( \bar{C}a)|- |\partial_{\tilde{y}}( \bar{D}a)| \le o(\varepsilon) h^{-2\gamma-2},
$$
and
$$\quad \partial_{\tilde{y}}(\bar{B} \partial_{\tilde{y}} a) \ge (2-o(\varepsilon))h^{-2\gamma-2}.
$$
Let $\tilde{x}_0 \rightarrow \infty$, the desired result follows because the boundary integral vanishes by the degeneracy of $k$ and the fact that $h$ approaches zero when $\tilde{x}_0 \rightarrow \infty$.
\end{proof}

We remark here that \lemmaref{l3} is the main part where we use the degeneracy. First of all, degeneracy helps positive terms to dominate. In a cusp domain, we may instead use (\ref{a9}) to achieve the same consequence. Secondly, the degeneracy also assures that the boundary integral vanishes. Without the degeneracy, we may demand a little stronger on the condition (\ref{a12}) that the integral on the left hand side goes to zero when $\tilde{x}_0 \rightarrow \infty$, not just being bounded. Although this modification on (\ref{a12}) requires some minor changes on the derivation of the existence below, we point out that the degeneracy is not necessary in the cusp domain. Now we are ready to obtain existence.

\begin{theorem} \label{t1}
Suppose that $g\in C^{m_0}$, $|w|_{C^4(\Omega)}<1$ and $f\in H^{(m,1,\gamma)}(\tilde{\Omega})$, with the condition $\partial_{\tilde{x}}^s f(0,\tilde{y})=0 $ for $0 \le s \le m-1$. If $m \le m_0-4$ and $\varepsilon=\varepsilon(m)$ sufficiently small, then there exists a unique weak solution $u \in H^{(m,1,\gamma)}(\tilde{\Omega})$ of (\ref{a11.5}).
\end{theorem}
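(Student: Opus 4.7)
The plan is to construct the solution by the standard duality method: establish an a priori estimate controlling a suitable weighted norm of the test function $v$ by a norm of $L^*v$, then realize $u$ as the Riesz representative of the bounded linear functional $L^*v \mapsto (f,v)$. Concretely, I want to prove for every $v \in \hat{C}^{\infty}(\tilde{\Omega})$ an inequality of the shape
\begin{equation*}
\|v\|_{(m,1,-\gamma-1)} \le C\,\|L^*v\|_{(m,1,-\gamma)},
\end{equation*}
with $C$ independent of $v$. Once this is in hand, the Cauchy condition $\partial_{\tilde{x}}^s f(0,\tilde{y}) = 0$ for $0\le s\le m-1$ combined with integration by parts will show that $v\mapsto (f,v)$ extends to a bounded linear functional on the closure of $L^*\hat{C}^{\infty}$ in the target norm, and the Hahn–Banach theorem together with the Riesz representation theorem in the weighted $L^2$ space will produce $u\in H^{(m,1,\gamma)}(\tilde{\Omega})$ satisfying \eqref{a11.5}. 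Uniqueness is then the same energy estimate applied to $L$ rather than $L^*$.

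For the zeroth-order piece of the estimate I will use \lemmaref{l3} almost verbatim: since the principal part of $L$ in \eqref{a10} is in divergence form, the operator $L^*$ differs from $L$ only in lower order terms (which, thanks to the factors $d=O(\varepsilon^2)$ and the degeneracy of $k$, are of size $o(\varepsilon)h^{-2\gamma-2}$ when tested against the multiplier $\frac{\tilde{y}^2-2}{h^{2\gamma}}v$). Plugging this multiplier into $(L^*v,\cdot)$, integrating by parts, and repeating the positivity argument of \lemmaref{l3} yields
\begin{equation*}
\int_{\tilde{\Omega}} h^{-2\gamma-2}\bigl(v^2 + (\partial_{\tilde{y}}v)^2\bigr) \le C\!\int_{\tilde{\Omega}} h^{-2\gamma}|L^*v|^2,
\end{equation*}
with the boundary terms at $\tilde{x}=\tilde{x}_0 \to \infty$ vanishing exactly as in the proof of \lemmaref{l3}. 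The single $\tilde{y}$-derivative needed for the $l=1$ part of the norm follows by commuting $\partial_{\tilde{y}}$ with $L^*$ (using $v(\tilde{x},\pm 1)=0$) and reapplying the same multiplier argument to $\partial_{\tilde{y}}v$.

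The heart of the proof will be the higher $\tilde{x}$-derivative estimates, which is where \lemmaref{l2} enters. Given $v\in\hat{C}^{\infty}$, let $\xi$ be the solution furnished by \lemmaref{l2} of the ODE
\begin{equation*}
\sum_{s=0}^m \lambda^{-s}\,\partial_{\tilde{x}}^s\!\Bigl(\tfrac{\tilde{y}^2-2}{h^{2\gamma}}\,\partial_{\tilde{x}}^s\xi\Bigr) = v.
\end{equation*}
This $\xi$ is the natural multiplier because, after integrating $(L^*v,\xi)=(v,L\xi)$ by parts in $\tilde{x}$ exactly $s$ times for each summand, the positivity worked out in \lemmaref{l3} is reproduced at each order and sums to give a bound on $\sum_{s\le m}\lambda^{-s}\|h^{-\gamma-1}\partial_{\tilde{x}}^s v\|_{L^2}^2$. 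The boundary conditions $\partial_{\tilde{x}}^s\xi(0,\tilde{y})=0$ and the vanishing of the trace integrals at $\tilde{x}=\tilde{x}_0\to\infty$ guaranteed by \lemmaref{l2} ensure that no uncontrolled boundary contributions appear; the commutator terms generated when moving $\partial_{\tilde{x}}^s$ through the coefficients of $L$ are, by \lemmaref{l0} and the smallness of $\varepsilon$, of strictly lower order and can be absorbed into the left-hand side after choosing $\lambda$ large and $\varepsilon=\varepsilon(m)$ small.

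I expect the main obstacle to be precisely this absorption: the multiplier $\xi$ carries a full $m$ $\tilde{x}$-derivatives on each side, and to keep the structure of \lemmaref{l3} intact after commuting, one has to verify carefully that every error term generated by differentiating $k=K\bar{k}$, by the $h^{-\gamma}$ weight, and by the lower-order coefficients $c,d$ in \eqref{a6} comes with an extra factor of $h$, an extra factor of $\lambda^{-1/2}$, or an extra factor of $\varepsilon$ relative to the dominating positive terms. This is why the hypothesis $m\le m_0-4$ appears (the coefficients live in $C^{m_0-4}$), why the $C^4$-smallness of $w$ is required (so $|w|_{C^4}<1$ controls $\bar{k},\bar{c},\bar{d}$), and why $\varepsilon=\varepsilon(m)$ must be chosen small as $m$ grows. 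Once these commutator accountings are complete, existence and uniqueness follow by the functional analytic setup described above.
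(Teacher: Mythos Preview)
Your overall architecture (duality, Hahn--Banach, Riesz) matches the paper, but the core estimate is set up in the wrong direction, and this is a genuine gap. You aim for a \emph{positive}-order estimate on the test function, of the form $\|v\|_{(m,1,\cdot)}\le C\|L^*v\|_{(m,1,\cdot)}$. If you feed such an estimate into the functional-analytic machine, the functional $L^*v\mapsto (f,v)$ becomes bounded on a positive-order space, and Riesz then produces $u$ in its dual, i.e.\ in a \emph{negative}-order weighted space, not in $H^{(m,1,\gamma)}(\tilde\Omega)$. To land $u$ in $H^{(m,1,\gamma)}$ you must instead prove the \emph{negative}-norm bound $\|h^2v\|_{(-m,-1,\gamma)}\le C\|L^*h^2v\|_{(-m,-1,\gamma)}$, exactly as in \eqref{a19}.

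This is precisely the role of $\xi$, which you have misread. Pairing $L\xi$ with $h^2v=h^2\sum_s(-1)^s\lambda^{-s}\partial_{\tilde x}^s\bigl(\tfrac{\tilde y^2-2}{h^{2\gamma+2}}\partial_{\tilde x}^s\xi\bigr)$ and integrating by parts reproduces \lemmaref{l3} at each order and yields the coercivity \eqref{a14}, which is a lower bound on $\|\xi\|_{(m,1,\gamma)}^2$, \emph{not} on any norm of $v$. The passage to $v$ is then the two-step conversion \eqref{a18}--\eqref{a19}: first $\|h^2v\|_{(-m,-1,\gamma)}\le C\|\xi\|_{(m,1,\gamma)}$ directly from the ODE defining $\xi$, and second $\|\xi\|_{(m,1,\gamma)}\le C\|L^*h^2v\|_{(-m,-1,\gamma)}$ from the coercivity. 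Note also that the ODE in \lemmaref{l2} must be solved with weight $h^{-2(\gamma+1)}$ (not $h^{-2\gamma}$), and the pairing carries the extra factor $h^2$; both are needed for \eqref{a18} to close. Your ``zeroth-order'' paragraph, applying \lemmaref{l3} directly to $v$, is correct as a warm-up but is not how the higher-order argument works; once you redirect the estimate onto $\xi$ and pass through the negative norm, the commutator bookkeeping you outline (smallness via $\varepsilon$, absorption via large $\lambda$) is exactly right.
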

\begin{proof}
Given $v \in \hat{C}^{\infty}(\tilde{\Omega})$, by \lemmaref{l2}, there exists $\xi \in H^{(m,1,\gamma+2)}(\tilde{\Omega})$ such that
\begin{eqnarray} \label{a13}
&&\sum_{s=0}^m \lambda^{-s} \partial^s_{\tilde{x}}( \frac{\tilde{y}^2-2}{h^{2\gamma+2}} \partial_{\tilde{x}}^s \xi) = v,\\
&&\xi(\tilde{x},-1)=\xi(\tilde{x},1)=0, \ \ \ \ \ \ \partial_{\tilde{x}}^s \xi(0,\tilde{y})=0, \ \ \ \ \text{for} \ \ 0\le s\le m-1, \nonumber \\
&&h^{-2(\gamma-2)}(\bar{x}_0)\int_{\tilde{x}=\tilde{x}_0}(\partial_{\tilde{x}}^s \partial_{\tilde{y}}^t \xi)^2 < \infty, \ \ \ \text{for} \ \ 0 \le s \le 2m-1, \ \ t < \infty. \nonumber
\end{eqnarray}

Our first step is to establish an estimate
\begin{equation} \label{a14}
\Bigg(L \xi, h^2 \sum_{s=0}^m (-1)^s \lambda^{-s} \partial_{\tilde{x}}^s(\frac{\tilde{y}^2-2}{h^{2\gamma+2}}\partial_{\tilde{x}}^s \xi ) \Bigg) \ge C \| \xi \|^2_{(m,1,\gamma)}.
\end{equation}
We first integrate the left hand side of (\ref{a14}) by parts to see
\begin{eqnarray} \label{a15}
&&\Bigg(L \xi, h^2 \sum_{s=0}^m (-1)^s \lambda^{-s} \partial_{\tilde{x}}^s(\frac{\tilde{y}^2-2}{h^{2\gamma+2}}\partial_{\tilde{x}}^s \xi ) \Bigg)\\ \nonumber
&=& \sum_{s=0}^m \int_{\tilde{\Omega}} \lambda^{-s} L(\partial_{\tilde{x}}^s \xi)\frac{\tilde{y}^2-2}{h^{2\gamma}}\partial_{\tilde{x}}^s \xi +\sum_{s=1}^m \int_{\tilde{\Omega}} \lambda^{-s}[\partial_{\tilde{x}}^s, L] \xi \frac{\tilde{y}^2-2}{h^{2\gamma}}\partial_{\tilde{x}}^s \xi \nonumber \\
&& \quad +\sum_{s=1}^m \sum_{l=0}^{s-1} \int_{\tilde{\Omega}} \lambda^{-s} \binom{s}{l}\partial_{\tilde{x}}^{s-l}h^2 \partial_{\tilde{x}}^l(L \xi)\frac{\tilde{y}^2-2}{h^{2\gamma+2}}\partial_{\tilde{x}}^s \xi. \nonumber
\end{eqnarray}
The boundary behavior in (\ref{a13}) guarantees all the boundary integrals in (\ref{a15}) vanish, and provides the condition assumed in \lemmaref{l3}. So we may use \lemmaref{l3} and pick $\varepsilon$ small so that
\begin{equation}\label{a16}
\sum_{s=0}^m \int_{\tilde{\Omega}} \lambda^{-s} L(\partial_{\tilde{x}}^s \xi) \frac{\tilde{y}^2-2}{h^{2\gamma}} \partial_{\tilde{x}}^s \xi \ge \sum_{s=0}^m \lambda^{-s} \int_{\tilde{\Omega}} \frac{(\partial_{\tilde{y}} \partial_{\tilde{x}}^s \xi)^2}{h^{2\gamma+2}}+ \frac{(\partial_{\tilde{x}}^s \xi)^2}{h^{2\gamma+2}}.
\end{equation}
For the other two integrals in (\ref{a15}), we are going to absorb them into the positive terms in (\ref{a16}). Consider the integral with a commutator, we use the following to illustrate our computation
\begin{eqnarray}
&& \bigg|\sum_{s=1}^m \int_{\tilde{\Omega}} \lambda^{-s}[\partial_{\tilde{x}}^{s+1}(\bar{K} \partial_{\tilde{x}} \xi) -\partial_{\tilde{x}}(\bar{K} \partial_{\tilde{x}}^{s+1} \xi)] \frac{\tilde{y}^2-2}{h^{2\gamma}} \partial_{\tilde{x}}^s \xi \bigg| \nonumber \\
&\le & \sum_{s=1}^m \sum_{l=0}^s \bigg|  \int_{\tilde{\Omega}} \lambda^{-s}  \binom{s}{l}  (\partial_{\tilde{x}}^{l+1} \frac{\varphi^2 k}{h^2}) (\partial_{\tilde{x}}^{s-l+1}\xi) \frac{\tilde{y}^2-2}{h^{2\gamma}} \partial_{\tilde{x}}^s \xi \bigg| \nonumber \\
\label{a17}
& \le & C_m \sum_{s=1}^m \sum_{j=0}^1 \lambda^{-s} \bigg|\int_{\tilde{\Omega}} \partial_{\tilde{x}} \big[\partial_{\tilde{x}}^j(\varphi^2 k) \frac{\tilde{y}^2-2}{h^{2\gamma+2}} \big] (\partial_{\tilde{x}}^s \xi)^2 \bigg|  \\
&& \quad + C_m \sum_{s=1}^m \sum_{j=0}^2 \lambda^{-s} \bigg|\int_{\tilde{\Omega}} \partial_{\tilde{x}}^j(\varphi^2 k) \frac{\tilde{y}^2-2}{h^{2\gamma+2}} (\partial_{\tilde{x}}^s \xi)^2 \bigg| \nonumber\\
&& \quad + C_m \sum_{s=1}^m \sum_{l=2}^s \sum_{j=1}^l \lambda^{-s} \bigg| \int_{\tilde{\Omega}} \partial_{\tilde{x}}^j(\varphi^2 k) \frac{\tilde{y}^2-2}{h^{2\gamma+2}} (\partial_{\tilde{x}}^{s-l+1} \xi) \partial_{\tilde{x}}^s \xi \bigg| \nonumber.
\end{eqnarray}
The first two terms in the right hand side of the last inequality of (\ref{a17}) have $k$ differentiated no more than twice, so by the assumption $|w|_{C^4}<1$, we can then pick $\varepsilon$ small so that those two terms are bounded by
$$
\frac{1}{4} \sum_{s=1}^m \lambda^{-s} \int_{\Omega} \frac{(\partial_{\tilde{x}}^s \xi)^2}{h^{2\gamma+2}}.
$$
The other term in (\ref{a17}) has to be estimated as well, but we cannot abuse $\varepsilon$. Since these terms have higher derivatives of the coefficients, which contain higher derivatives of $w$, the Nash-Moser iteration prohibits us from controlling them with $\varepsilon$. Instead, we apply Cauchy-Schwartz inequality with a small $\delta$
\begin{eqnarray*}
&&\sum_{s=1}^m \sum_{l=2}^s \sum_{j=1}^l C_m \lambda^{-s} \bigg| \int_{\Omega} \partial_{\tilde{x}}^j(\varphi k) \frac{\tilde{y}^2-2}{h^{2\gamma+2}} (\partial_{\tilde{x}}^{s-l+1} \xi) \partial_{\tilde{x}}^s \xi \bigg| \\
&\le & \sum_{s=1}^m \sum_{l=2}^s \sum_{j=1}^l C_m \delta \lambda^{-s} \int_{\tilde{\Omega}} \frac{(\partial_{\tilde{x}}^s \xi)^2}{h^{2\gamma+2}} + \lambda^{-s} \int_{\tilde{\Omega}} \big[C_m \frac{\partial_{\tilde{x}}^j(\varphi k)}{4 \delta} \big] \frac{(\partial_{\tilde{x}}^{s-l+1} \xi)^2} {h^{2\gamma+2}}.
\end{eqnarray*}
The first term in the right hand side can be absorbed to (\ref{a16}) when $\delta$ chosen small.

We observe that the second term only has $\partial_{\tilde{x}}^{s-l+1} \xi$ with $s-l+1$ strictly less than $s$. So if we pick $\lambda$ larger than $\bigg{\|} C_m \frac{\partial_{\tilde{x}}^j(\varphi k)}{4 \delta} \bigg{\|}_{L^{\infty}(\tilde{\Omega})} $, this term can be absorbed into
$$
\lambda^{-s+l-1}\int_{\tilde{\Omega}}\frac{(\partial_{\tilde{x}}^{s-l+1} \xi)^2}{h^{2\gamma+2}}
$$
in (\ref{a16}). Treat all the other terms in a similar way, we derived (\ref{a14}).

We then work on functional analysis. We have for any $v \in \hat{C}^{\infty}(\tilde{\Omega})$
\begin{eqnarray} \label{a18}
\|h^2v \|_{(-m,-1,\gamma)}&:=& \sup_{\eta \in H^{(m,1,\gamma)}(\tilde{\Omega})} \frac{|(\eta,h^2v)|}{\| \eta \|_{(m,1,\gamma)}}\\
&=&\sup_{\eta \in H^{(m,1,\gamma)}(\tilde{\Omega})} \frac{|(h(y^2-2) \eta, \xi)_{(m,0,\gamma)}|}{\| \eta \|_{(m,1,\gamma)}} \nonumber \\
& \le & C \sup_{\eta \in H^{(m,1,\gamma)}(\tilde{\Omega})} \frac{\|\eta\|_{(m,0,\gamma)}\|\xi\|_{(m,0,\gamma)}}{\| \eta \|_{(m,1,\gamma)}} \nonumber \\
& \le & C \| \xi \|_{(m,1,\gamma)}. \nonumber
\end{eqnarray}
Here $( \cdot,\cdot )_{(m,0,\gamma)}$ denotes the inner product on $H^{(m,0,\gamma)}(\tilde{\Omega})$, and in the dual space $H^{(-m,-1,\gamma)}(\tilde{\Omega})$ of $H^{(m,1,\gamma)}(\tilde{\Omega})$ the norm  $\|\cdot\|_{(-m,-1,\gamma)}$ is induced naturally.

Now apply (\ref{a14}) to obtain
\begin{eqnarray*}
\| \xi \|_{(m,1,\gamma)}\|L^*h^2v \|_{(-m,-1,\gamma)} &\ge & ( \xi, L^*h^2v)\\
&=& (L \xi, h^2v)\\
&=&\bigg( L \xi, h^2 \sum_{s=0}^m \lambda^{-s}(-1)^s \partial_{\tilde{x}}^s (\frac{\tilde{y}^2-2}{h^{2\gamma+2}} \partial_{\tilde{x}}^s \xi) \bigg)\\
&\ge & C\| \xi \|^2_{(m,1,\gamma)},
\end{eqnarray*}
together with (\ref{a18}) we arrive at
\begin{equation} \label{a19}
\|h^2 v\|_{(-m,-1,\gamma)} \le C \|L^*h^2v \|_{(-m,-1,\gamma)}.
\end{equation}
Define the linear functional $F:L^*(h^2 \hat{C}^{\infty}(\tilde{\Omega})) \rightarrow \mathbb{R}$ by
$$
F(L^* h^2 v)=(f, h^2v).
$$
Then the following show that $F$ is bounded on the subspace $L^*(h^2 \hat{C}^{\infty}(\tilde{\Omega})) $ of $H^{(-m,-1,\gamma)}(\tilde{\Omega})$,
\begin{eqnarray*}
|F(L^*h^2v)|=|(f,h^2v)| &\le & \| f \|_{(m,1,\gamma)}\|h^2v\|_{(-m,-1,\gamma)}\\
& \le & C \| f \|_{(m,1,\gamma)} \|L^* h^2 v \|_{(-m,-1,\gamma)}.
\end{eqnarray*}
Notice that here we use our hypothesis that $f\in H^{(m,1,\gamma)}(\tilde{\Omega})$ and $\partial_{\tilde{x}}^s f(0,y)=0 $ for $0 \le s \le m-1$. We can then apply the Hahn-Banach theorem to obtain a bounded extension of $F$ (still denoted $F$) defined on all of $H^{(-m,-1,\gamma)}(\tilde{\Omega})$. Then by the Riesz representation theorem there exists a weak solution $u\in H^{(m,1,\gamma)}(\tilde{\Omega})$ of (\ref{a11.5}).
\end{proof}

We then boot-strap the regularity and derive an \textit{a priori} estimate needed for the Nash-Moser iteration. We introduce here a weighted Sobolev norm associated with the cusp domains $\Omega$,

\begin{equation*}
\| u \|^2_{(m,l,\gamma,\Omega)}= \int_{\Omega} \sum_{\substack{ s \le m, t \le l\\ t+s \le \max(m,l)}} | h^{(-\gamma+t+s)} \partial_{x}^s  \partial_{y}^t u |^2.
\end{equation*}

Let $\overline{C}^{\infty}(\Omega)$ be the collection of $C^{\infty}(\Omega)$ functions vanishing in a neighborhood of the origin. $H^{(m,l,\gamma)}(\Omega)$ is defined to be the closure of $\overline{C}^{\infty}(\Omega)$ with respect to $\|\cdot \|_{(m,l,\gamma,\Omega)}$. Notice that lower derivatives of $H^{(m,l,\gamma)}(\Omega)$ functions vanish faster than those of higher derivatives, but all the derivatives of $H^{(m,l,\gamma)}(\tilde{\Omega})$ functions defined before vanish at the same speed. However, a simple coordinate change shows $\|\cdot \|_{(m,l,\gamma,\Omega)}$ is equivalent to $\| \cdot \|_{(m,l,\gamma,\tilde{\Omega})}$. The notation $\mathbb{R}^2_+$ below denote the right half plane in $(x,y)$ coordinate, $\| \cdot \|_{H^m(\mathbb{R}^2_+)}$ and $H^m(\mathbb{R}^2_+)$ are usual Sobolev norm and spaces.

\begin{theorem} \label{t2}
Suppose that $g\in C^{m_0} $ and $f \in \overline{C}^{\infty}(\Omega)$. If  $|w|_{C^6(\Omega)} \le 1$ and $\varepsilon=\varepsilon(m)$ is sufficiently small then there exists a unique solution $u \in H^{(m,\gamma)}(\Omega) \cap  C^{m_0-3 }(\Omega)$ of (\ref{a11}). Moreover, there exists a constant $C_m$ independent of $\varepsilon$ such that

$$
\|u\|_{H^m(R^2_+)} \le C_m \left( \|w\|_{H^{m+6}(R^2_+)}\|f\|_{H^{2}(\mathbb{R}^2_+)}+\|f\|_{H^{m}(\mathbb{R}^2_+)}
 \right).
$$

\end{theorem}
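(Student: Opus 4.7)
My plan is to upgrade the weak solution produced in \theoremref{t1} to a classical one, then establish the tame estimate by differentiating $Lu=f$ and applying the energy inequality of \lemmaref{l3} to each derivative.

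First, I will obtain existence in $H^{(m,\gamma)}(\Omega)$. \theoremref{t1} furnishes a weak solution $u\in H^{(m,1,\gamma)}(\tilde\Omega)$; to upgrade the $\tilde{y}$-regularity I will differentiate the equation $Lu=f$ in the tangential direction $\tilde{y}$ any desired number of times. Since $\partial_{\tilde{y}}$ preserves the boundary condition on $\{\tilde{y}=\pm 1\}$ and the coefficients of $L$ are smooth with respect to $\tilde{y}$, commuting $\partial_{\tilde{y}}^l$ past $L$ produces a lower-order inhomogeneity that belongs to the appropriate weighted space by induction on $l$; applying \theoremref{t1} in each step then yields $u\in H^{(m,l,\gamma)}(\tilde\Omega)$ for arbitrary $l$. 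Translating back via the coordinate change $\bar{x}\mapsto\tilde{x}$ (which, as remarked after the definition of $\|\cdot\|_{(m,l,\gamma,\Omega)}$, makes the two weighted norms equivalent) gives $u\in H^{(m,\gamma)}(\Omega)$. The classical regularity $u\in C^{m_0-3}(\Omega)$ then follows from Sobolev embedding on the interior, combined with the standard elliptic regularity enjoyed in the strictly elliptic interior of $\Omega$ (where $K>0$), accounting for the $C^{m_0-2}$ regularity of the coordinate change from \lemmaref{l1} and one loss from the embedding.

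For the Moser-type estimate, I will work on $\tilde\Omega$ and pair $L\xi_s$ against a suitable test function built from $\partial_{\tilde{x}}^s u$, precisely as in the existence proof. Setting $\xi=u$ in the chain that culminated in \eqref{a14}, the energy inequality of \lemmaref{l3} and its $s$-th order analogues yield
$$
\|u\|_{(m,1,\gamma)}^2 \;\le\; C \left|\Big(f,\,h^2\sum_{s=0}^m (-1)^s \lambda^{-s}\partial_{\tilde{x}}^s\!\big(\tfrac{\tilde{y}^2-2}{h^{2\gamma+2}}\partial_{\tilde{x}}^s u\big)\Big)\right| \;+\; \text{commutator terms}.
$$
The right-hand pairing is controlled by $\|f\|_{(m,1,\gamma)}\|u\|_{(m,1,\gamma)}$, absorbing the second factor. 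The commutator terms $[\partial_{\tilde{x}}^s,L]u$ contain derivatives of the coefficients $\bar K,\bar A,\bar B,\bar C,\bar D$, which in turn are built from $k,c,d$ and the change-of-variables $\bar{x}$. Decomposing these commutators by the Leibniz rule and applying the standard tame product estimate $\|fg\|_{H^m}\le C(\|f\|_{H^m}\|g\|_{L^\infty}+\|f\|_{L^\infty}\|g\|_{H^m})$ — with the low-derivative factor controlled by $|w|_{C^6}\le 1$ — separates the contribution into (i) terms linear in high derivatives of the coefficients times $\|u\|_{L^\infty}$-type quantities (absorbed by the above $H^2(f)$ bound after re-running the argument at level $m=2$) and (ii) lower-order terms that can be absorbed into the left-hand side by choosing $\lambda$ large, exactly as in the derivation of \eqref{a14}. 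Passing back to the $(x,y)$ coordinates converts $\|u\|_{(m,1,\gamma)}$ into a bound on $\|u\|_{H^m(\mathbb{R}^2_+)}$.

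The main obstacle is bookkeeping the $w$-dependence to obtain the precise index $m+6$. The coefficients $k,c$ carry up to three derivatives of $w$ directly, and they also depend on $\bar{x}$, whose $H^m$ norm is controlled by $1+\|w\|_{H^{m+4}(\Omega)}$ thanks to \eqref{a7}. Differentiating a coefficient $m$ times therefore costs at most $m+4$ derivatives of $w$; an additional two derivatives are lost when passing from the weighted norm $\|\cdot\|_{(m,1,\gamma)}$ back to the ordinary $H^m$ norm (since $h^{-\gamma}$ and its derivatives must be traded against derivatives of the solution/coefficients via Hardy-type inequalities on the cusp), producing the final index $\|w\|_{H^{m+6}(\mathbb{R}^2_+)}$. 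Verifying that each such trade preserves the tame linear-in-top-derivative structure — so that the multiplicative factor of $\|w\|_{H^{m+6}}$ appears only against the low-norm $\|f\|_{H^2}$ — is the delicate step that makes the estimate suitable for the Nash--Moser iteration of the next section.
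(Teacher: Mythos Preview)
Your outline has the right overall architecture, but the absorption mechanism you propose in item~(ii) does not yield a tame constant. In the derivation of \eqref{a14} the parameter $\lambda$ must be chosen larger than $\big\|C_m\,\partial_{\tilde x}^{\,j}(\varphi k)/4\delta\big\|_{L^\infty}$ for $j$ up to $m$; that quantity depends on high derivatives of $w$, so importing the same absorption here makes $C_m$ depend on $\|w\|_{C^m}$-type norms and destroys tameness. More seriously, the Nirenberg split you describe leaves behind a contribution of type $\|\partial k\|_{L^\infty}\,\|u\|_{H^{m}}$, which is \emph{top} order in $u$, not lower order; no choice of $\lambda$ absorbs it.

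The paper closes the estimate by two different devices, neither of which is $\lambda$. The commutator terms carrying $\partial_{\tilde x}^{s+1}u$, $\partial_{\tilde x}^{s}\partial_{\tilde y}u$, or $\partial_{\tilde x}^{s}u$ have coefficients built from at most two derivatives of $k,c,d$; under $|w|_{C^6}\le 1$ the rescaling and the degeneracy $k=K\bar k$ make these $O(\varepsilon)$, so they are moved to the left by taking $\varepsilon=\varepsilon(m)$ small. For the remaining terms (high derivatives of coefficients against low derivatives of $u$, e.g.\ $\partial_{\tilde x}^{s-2}\bar K\,\partial_{\tilde x}^{4}u$) the paper changes back to $(x,y)$, extends to $\mathbb R^2_+$ via \lemmaref{l3.1}, applies \cororef{c3.1} to convert $\|h^{-\gamma+s+1}\partial^{s-2}k\,\partial^{4}u\|_{L^2(\Omega)}$ into the unweighted $\|\partial^{\gamma-s-1}(\partial^{s-2}k\,\partial^{4}u)\|_{L^2(\mathbb R^2_+)}$, and only then invokes the Nirenberg inequality; finally one sets $\gamma=m$ so that the residual $O(\varepsilon)\|u\|_{H^\gamma(\mathbb R^2_+)}$ can be absorbed into the left side. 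This extension--\cororef{c3.1}--Nirenberg--$\gamma=m$ sequence is what produces a constant independent of $\varepsilon$ and of high norms of $w$, and it is missing from your outline. You also need the identity \eqref{a20}, which solves for $\partial_{\tilde y}^{2}u$ algebraically from the equation: this, rather than commuting $\partial_{\tilde y}$ through $L$, is how the paper bootstraps all higher $\tilde y$-derivatives in the estimate.
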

\begin{proof}
 Since $L$ is strictly elliptic in every compact subset inside $\tilde{\Omega}$ and the coefficients of $L$ as well as $f$ are at least $C^{m_0-4}$, we obtain that $u \in H^{(m,1,\gamma)}(\tilde{\Omega}) \cap C^{m_0-3}$. The fact that $u \in H^{(m,\gamma)}(\tilde{\Omega})$ can be seen by differentiating the equation (\ref{a11}) and applying mathematical induction. Integrating the expression (\ref{a11.5}) by parts yields the boundary condition. We then change the coordinate back to $\Omega$, and all the conditions remain valid.

For the estimate we observe that from (\ref{a10}) we may solve, for any $l \ge 0$,
\begin{eqnarray}\label{a20}
\partial_{\tilde{x}}^l \partial_{\tilde{y}}^2 u &=& \partial_{\tilde{x}}^l \big[\bar{B}^{-1} (f -\partial_{\tilde{x}}(\bar{K} \partial_{\tilde{x}}u) -\partial_{\tilde{x}}(\bar{A} \partial_{\tilde{y}}u)- \partial_{\tilde{y}}(\bar{A} \partial_{\tilde{x}}u)\\
&& \quad \quad - \bar{C} \partial_{\tilde{x}} u - \bar{D} \partial_{\tilde{y}}u -\partial_{\tilde{y}}(B \partial_{\tilde{y}}u) \big]. \nonumber
\end{eqnarray}

Integrate $Lu = f$ against $\frac{\tilde{y}^2-2}{h^{2 \gamma}} u$ by part as in \lemmaref{l3}, together with Schwartz inequality we have
$$
\| h^{-\gamma - 1} \partial_{\tilde{y}} u \|^2_{L^2(\tilde{\Omega})} +\| h^{-\gamma - 1} u \|^2_{L^2(\tilde{\Omega})} \le C \|h^{-\gamma} f\|^2_{L^2(\tilde{\Omega})}.
$$

Next, we define
$$L_1 (\partial_{\tilde{x}}u) = \partial_{\tilde{x}} L u- (\partial^2_{\tilde{x}}A +\partial_{\tilde{x}}D+ \partial^2_{\tilde{x} \tilde{y}}B)\partial_{\tilde{y}}u.$$
The integration by parts on $ \int_{\tilde{\Omega}} L_1 (\partial_{\tilde{x}} u) \frac{\tilde{y}^2-2}{h^{2 \gamma}} \partial_{\tilde{x}} u$ as in \lemmaref{l3}, together with the estimate on $\partial_{\tilde{y}} u$ and the condition that $|w|_{C^5} \le 1$  yield
$$
\|h^{-\gamma-1} \partial_{\tilde{x} \tilde{y}} u \|^2_{L^2(\tilde{\Omega})} + \|h^{-\gamma-1} \partial_{\tilde{x}} u \|^2_{L^2(\tilde{\Omega})} \le C \|f \|^2_{H^{1, \gamma}(\tilde{\Omega})}.
$$
By a similar method we obtain an estimate for $\partial_{\tilde{x}}^2 u$ and by (\ref{a20}) we obtain an estimate for $\partial^2_{\tilde{y}}u$. The full estimate up to second order is then
\begin{equation} \label{a99}
\|u\|_{H^{2,\gamma+1}(\tilde{\Omega})} \le C \|f\|_{H^{2, \gamma}(\tilde{\Omega})}.
\end{equation}

Then we compute $L(\partial_{\tilde{x}}^s u)$,
\begin{eqnarray}
\quad \partial_{\tilde{x}}(\bar{K} \partial_{\tilde{x}}^{s+1}u) &=& \partial^s_{\tilde{x}}(\partial_{\tilde{x}}( \bar{K}\partial_{\tilde{x}}u))-s \partial_{\tilde{x}}^2 \bar{K} \partial_{\tilde{x}}^su  \label{a100} \\
&&-s \partial_{\tilde{x}}\bar{K} \partial_{\tilde{x}}^{s+1} u - \binom{s}{s-2} \partial_{\tilde{x}}^2 \bar{K} \partial_{\tilde{x}}^{s} u \nonumber \\
&& - \sum_{l=0}^{s-3} \binom{s}{l} \partial_{\tilde{x}}(\partial_{\tilde{x}}^{s-l}\bar{K}\partial_{\tilde{x}}^{l+1}u) - \binom{s}{s-2} \partial_{\tilde{x}}^3 \bar{K} \partial_{\tilde{x}}^{s-1} u, \nonumber \\
\partial_{\tilde{x}}(\bar{A}\partial_{\tilde{y}} \partial_{\tilde{x}}^s u) &=& \partial_{\tilde{x}}^s (\partial_{\tilde{x}}(\bar{A}\partial_{\tilde{y}} u)) -s \partial_{\tilde{x}}^2 \bar{A}\partial_{\tilde{x}}^{s-1}\partial_{\tilde{y}}u -s \partial_{\tilde{x}} \bar{A}\partial_{\tilde{x}}^s \partial_{\tilde{y}} u \label{a101} \\
&& - \sum_{l=0}^{s-2} \binom{s}{l}\partial_{\tilde{x}}(\partial_{\tilde{x}}^{s-l}\bar{A}\partial_{\tilde{x}}^l\partial_{\tilde{y}}u), \nonumber \\
\partial_{\tilde{y}}(\bar{A}\partial_{\tilde{x}}^{s+1}u) &=& \partial_{\tilde{x}}^s( \partial_{\tilde{y}}(\bar{A}\partial_{\tilde{x}}u)) -s\partial_{\tilde{x} \tilde{y}} \bar{A} \partial_{\tilde{x}}^s u -s\partial_{\tilde{x}} \bar{A} \partial_{\tilde{x}}^s \partial_{\tilde{y}} u \label{a102} \\
&& -\sum_{l=0}^{s-2} \binom{s}{l} \partial_{\tilde{y}}(\partial_{\tilde{x}}^{s-l}\bar{A}\partial_{\tilde{x}}^{s+1}u), \nonumber \\
\bar{C} \partial_{\tilde{x}}^{s+1}u  &=& \partial_{\tilde{x}}^s(\bar{C}\partial_{\tilde{x}} u) - s \partial_{\tilde{x}} \bar{C} \partial_{\tilde{x}}^su -\sum_{l=0}^{s-2} \binom{s}{l}\partial_{\tilde{x}}^{s-l}\bar{C}\partial_{\tilde{x}}^{l+1}u, \label{a103} \\
\bar{D} \partial_{\tilde{x}}^s \partial_{\tilde{y}} u &=& \partial_{\tilde{x}}^s (\bar{D} \partial_{\tilde{y}} u) - \sum_{l=0}^{s-1} \binom{s}{l} \partial_{\tilde{x}}^{s-l}\bar{D} \partial_{\tilde{x}}^l \partial_{\tilde{y}} u, \label{a104}\\
\partial_{\tilde{y}}(\bar{B} \partial_{\tilde{y}} \partial_{\tilde{x}}^s u) &=& \partial_{\tilde{x}}^s (\partial_{\tilde{y}}(\bar{B} \partial_{\tilde{y}}u)) -s\partial_{\tilde{x}} \bar{B}\partial_{\tilde{x}}^{s-1}\partial_{\tilde{y}}^2u  \label{a105} \\
&& -s(s-1) \partial_{\tilde{x}}^2 \bar{B} \partial_{\tilde{x}}^{s-2} \partial_{\tilde{y}}^2 u -\sum_{l=0}^{s-1} \binom{s}{l} \partial_{\tilde{x}} ^{s-l}\partial_{\tilde{y}}\bar{B}\partial_{\tilde{x}}^l \partial_{\tilde{y}}u \nonumber\\
&&-\sum_{l=0}^{s-3} \binom{s}{l} \partial_{\tilde{x}}^{s-l}\bar{B}\partial_{\tilde{x}} ^l \partial_{\tilde{y}}^2 u. \nonumber
\end{eqnarray}
For the last equality we need to use (\ref{a20}) to replace the terms with more than one derivative with respect to $\tilde{y}$.

For an arbitrary $s$ we add up the left hand side of (\ref{a100}) through (\ref{a105}) to obtain $L(\partial_{\tilde{x}}^s u)$. On the right hand side, for the terms with $\partial_{\tilde{x}}^s u$, $\partial_{\tilde{x}}^{s+1} u$ and $\partial_{\tilde{x}}^s \partial_{\tilde{y}} u$, we observe that their coefficients are small when $\varepsilon$ is small. Therefore we may move them to the left hand side to form a new operator, then integrate this new operator against $\frac{\tilde{y}^2-2}{h^{\gamma}}\partial_{\tilde{x}}^s u$. The estimate as in \lemmaref{l3} holds by the same type of computation.

The other terms on the right hand side have $\partial_{\tilde{x}}^l f$ with $l \le s$ and the lower derivatives $\partial_{\tilde{x}}^i \partial_{\tilde{y}}^j u$ with $0 \le j \le 1$ and $0 \le i+j < s$. We will make an estimate by integrating them against $\frac{\tilde{y}^2-2}{h^{\gamma}}\partial_{\tilde{x}}^s u$ as well. Below we illustrate our method on one of these terms, the other can be treated in the same way:
$$
\int_{\tilde{\Omega}} \big| \partial_{\tilde{x}}^{s-2}\bar{K} \partial_{\tilde{x}}^{4} u \frac{\tilde{y}^2-2}{h^{2\gamma}} \partial_{\tilde{x}}^s u\big| \le  \frac{4}{ \delta_s} \| (h^{-\gamma+1} \partial_{\tilde{x}}^{s-2} \bar{K} \partial_{\tilde{x}}^{4} u)\|_{L^2(\tilde{\Omega})}^2 + \delta_s \| h^{-\gamma-1} \partial_{\tilde{x}}^s u \|^2_{L^2(\tilde{\Omega})},
$$
where $\delta_s$ is a small constant depending only on $s$ so that $\delta_s \| h^{-\gamma-1} \partial_{\tilde{x}}^s u \|^2_{L^2(\tilde{\Omega})}$ can be absorbed into the left hand side of our estimate.

For
$
\frac{4}{ \delta_s} \| (h^{-\gamma+1} \partial_{\tilde{x}}^{s-2} \bar{K} \partial_{\tilde{x}}^{4} u)\|_{L^2(\tilde{\Omega})}^2,
$
we notice that if $s \le 3$ we use the condition $|w|_{C^4} \le 1 $ and absorb it into the left hand side of our estimate. For the case $s > 3$ we change the coordinate to $(x, y)$ and denote the derivative with respect to this coordinate by $\partial$. Then we extend $u$ by \lemmaref{l3.1} and apply \lemmaref{c3.1} below,
\begin{eqnarray*}
&&\| h^{-\gamma+1} \partial_{\tilde{x}}^{s-2}(\bar{K}) \partial_{\tilde{x}}^4 u \|_{L^2(\tilde{\Omega})} \\
&& \quad \le \| h^{-\gamma+s+1} \partial^{s-2}k \partial^4 u \|_{L^2(\mathbb{R}^2_+)} \le \| \partial^{\gamma-s-1} (\partial^{s-2}k \partial^4 u) \|_{L^2(\mathbb{R}^2_+)} \\
&& \quad \le \| \partial k \|_{L^{\infty}(\mathbb{R}^2_+)} \| u\|_{H^{\gamma}(\mathbb{R}^2_+)}+ \| k\|_{H^{\gamma+1}(\mathbb{R}^2_+)} \| u \|_{L^{\infty}(\mathbb{R}^2_+)},
\end{eqnarray*}
where the last inequality is the Nirenberg inequality (see \lemmaref{l3.3}). With the assumption that $|w|_{C^4} \le 1$, we have $\partial k = O(\varepsilon)$. Apply a similar method on the terms with $f$ and the other coefficients, we arrive at
\begin{eqnarray*}
&&\|h^{-\gamma-1} \partial_{\tilde{y}} \partial_{\tilde{x}}^s u \|_{L^2(\tilde{\Omega})} + \|h^{-\gamma-1} \partial_{\tilde{x}}^s u \|_{L^2(\tilde{\Omega})} \\
&& \quad \le \|f\|_{H^{ \gamma}(\mathbb{R}^2_+)} +\|w\|_{H^{s+6}(\mathbb{R}^2_+)} \|f\|_{H^2(\mathbb{R}^2_+)}+ O(\varepsilon)\|u\|_{H^{\gamma}(\mathbb{R}^2_+)}\\
&& \quad \quad +(1+\|w\|_{H^{s+6}(\mathbb{R}^2_+)} )\|u\|_{H^2(\mathbb{R}^2_+)}.
\end{eqnarray*}
Solving for higher derivatives of $u$ with respect to $\tilde{y}$, we obtain a similar estimate as above.

For the terms in equations (\ref{a99}) through (\ref{a105}), we change the coordinates to $(x,y)$ for every term that has not been changed. Adding all these estimates up to the order $m$ we have
\begin{eqnarray*}
&&\|u\|_{H^{m,\gamma+1}(\Omega)} \\
&& \quad \le C_m (\|f\|_{H^{ \gamma}(\mathbb{R}^2_+)} +\|w\|_{H^{s+6}(\mathbb{R}^2_+)} \|f\|_{H^2(\mathbb{R
}^2_+)}+ O(\varepsilon)\|u\|_{H^{\gamma}(\mathbb{R}^2_+)}\\
&& \quad \quad +(1+\|w\|_{H^{s+6}(\mathbb{R}^2_+)} )\|u\|_{H^2(\mathbb{R}^2_+)}).
\end{eqnarray*}
Lastly, we pick $\gamma = m$ and $\varepsilon$ small depending on $m$ and use the estimate (\ref{a99}) to obtain the desired result.
\end{proof}


\section{The Nash-Moser iteration with elliptic cusps}
In this section we derive the solution of our nonlinear PDE with $K$ positive in a cusp domain. We first provide some basic tools for the Nash-Moser iteration. The following extension theorem is valid in a cusp domain with the norm
$$
\| u \|_{H^{(m,\gamma)}(\Omega)}= \sum_{\substack{t,s \in \mathbb{N} \\0 \le t+s \le m }}  \int_{\Omega} |h^{-\gamma+t+s}  \partial_x^s \partial_y^t u|^2.
$$
\begin{lemma} \label{l3.1}
Suppose $\Omega = \{\ (x,y) \ | \ 0 \le x \le 1,\ \ -h(x) \le y \le h(x) \ \}$ is a cusp domain with $h \in C^{\infty}((0,\infty))$ satisfies
\begin{equation} \label{a3.1}
\big| h^{(l)} \cdot h^{l-1} \big| \le C_l,\ \ \ \ \ \ \text{for } \ l \in \mathbb{N}.
\end{equation}
Then there is an operator
$$
E: H^{(m,\gamma)}(\Omega) \rightarrow H^{(m,\gamma)}(\mathbb{R}^2_+)
$$
with a positive constant $C$ independent of $u$ such that
$$
\|Eu\|_{H^{(m,\gamma)}(\mathbb{R}^2_+)} \le C \|u\|_{H^{(m,\gamma)}(\Omega)} \text{,} \ \ \ \ \text{and} \ \ Eu=u \text{ in } \Omega.
$$
Moreover, we may choose $Eu$ vanishing outside $y= \pm 2h(x)$.
\end{lemma}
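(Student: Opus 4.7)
My plan is to adapt the classical Hestenes--Seeley higher-order reflection to the cusp geometry. In the upper extension strip $\{h(x) < y < 2h(x)\}$ define
\begin{equation*}
\tilde u(x,y) := \sum_{k=1}^{m+1} a_k\, u\bigl(x,\, (1+\lambda_k)\, h(x) - \lambda_k\, y\bigr),
\end{equation*}
with $\lambda_1, \dots, \lambda_{m+1}$ a fixed set of distinct numbers in $(0,2)$ and the coefficients $a_k$ determined by the Vandermonde system $\sum_k a_k (-\lambda_k)^j = 1$, $0 \le j \le m$. These matching conditions make $\tilde u$ agree with $u$ to order $m$ across $y = h(x)$, and for $y$ in the strip the reflected argument $y_k := (1+\lambda_k) h(x) - \lambda_k y$ lies in $((1-\lambda_k) h(x), h(x)) \subset (-h(x), h(x))$, so $u(x, y_k)$ is well defined. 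A symmetric construction handles the lower strip $\{-2h(x) < y < -h(x)\}$. I then multiply by the cutoff $\chi(x,y) := \chi_0(y/h(x))$, where $\chi_0 \in C_c^\infty(\mathbb{R})$ equals $1$ on $[-1,1]$ and is supported in $[-2,2]$, set $Eu := \chi \tilde u$ on the double strip (with $\tilde u \equiv u$ on $\Omega$), and extend by zero. This gives $Eu = u$ on $\Omega$ and forces the desired support in $|y| \le 2h(x)$.

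For the norm estimate, fix $s+t \le m$ and expand $\partial_x^s \partial_y^t (\chi \tilde u)$ on the upper strip by Leibniz in a split $s = s_1 + s_2$, $t = t_1 + t_2$, and then by Fa\`a di Bruno on $\partial_x^{s_2}\partial_y^{t_2} \tilde u$. On the support of $\chi$ one has $|y| \le 2h(x)$, and using \eqref{a3.1} (which yields $|h^{(l)}| \le C_l h^{1-l}$) an induction in $s_1$ gives the pointwise bound $|\partial_x^{s_1} \partial_y^{t_1} \chi| \le C h^{-(s_1+t_1)}$. Because $y_k$ is linear in $y$ and depends on $x$ only through $h$, every chain-rule term appearing in $\partial_x^{s_2}\partial_y^{t_2} \tilde u$ takes the form
\begin{equation*}
C \prod_{i=1}^{\beta} h^{(l_i)}(x) \cdot \bigl(\partial_x^{\alpha} \partial_y^{\beta + t_2} u\bigr)(x, y_k), \qquad \alpha + \sum_{i=1}^{\beta} l_i = s_2, \quad l_i \ge 1,
\end{equation*}
and \eqref{a3.1} yields $\prod_i |h^{(l_i)}| \le C\, h^{\beta - (s_2 - \alpha)}$.

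Combining the pointwise bounds against the weight $h^{-\gamma + s + t}$, the total exponent of $h$ collapses to $-\gamma + \alpha + (\beta + t_2)$, which is \emph{exactly} the weight the $H^{(m,\gamma)}(\Omega)$ norm assigns to $\partial_x^{\alpha} \partial_y^{\beta + t_2} u$ (and the derivative order $\alpha + \beta + t_2 \le s+t \le m$ is admissible). Squaring, integrating over the strip, and changing variables $(x,y) \mapsto (x, z)$ with $z = y_k$ then identifies this integral, up to the Jacobian $\lambda_k^{-1}$, with one of the terms in $\|u\|_{H^{(m,\gamma)}(\Omega)}^2$, since the image of the strip under the reflection lies inside $\Omega$ and the weight is independent of $y$. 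Summing over $k$, over all Leibniz and Fa\`a di Bruno partitions, and over $s+t \le m$, together with the analogous lower-strip estimate and the trivial one on $\Omega$, yields the claimed bound $\|Eu\|_{H^{(m,\gamma)}(\mathbb{R}^2_+)} \le C \|u\|_{H^{(m,\gamma)}(\Omega)}$. The main technical point is not any single inequality but the combinatorial bookkeeping behind this exponent match: the $h$-weight, the $h^{(l)}$ factors produced by reflecting through a cusp boundary, and the $y/h$-rescaled cutoff all conspire to cancel, and this cancellation is precisely what hypothesis \eqref{a3.1} is tailored to make possible.
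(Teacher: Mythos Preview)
Your proof is correct. The paper takes a different route: it first straightens the cusp via the change of variables
\[
\tilde{x} = \int_x^1 \frac{dt}{h(t)}, \qquad \tilde{y} = \frac{y}{h(x)},
\]
which sends $\Omega$ to the half-infinite strip $\tilde{\Omega} = \{\tilde{x} \ge 0,\ |\tilde{y}| \le 1\}$, verifies (using \eqref{a3.1}) that $\|\cdot\|_{H^{(m,\gamma)}(\Omega)}$ is equivalent to the norm $\sum_{s+t \le m} \|h^{-\gamma} \partial_{\tilde{x}}^s \partial_{\tilde{y}}^t u\|_{L^2}^2$ on $\tilde{\Omega}$, and then extends across the flat boundary $\tilde{y} = \pm 1$ by the standard higher-order reflection with cutoff in $|\tilde y|\le 2$. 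You instead reflect directly across the curved boundary $y = h(x)$ in the original coordinates and carry out the Fa\`a di Bruno bookkeeping by hand. The two constructions are closely related---your reflection $y \mapsto (1+\lambda_k)h(x) - \lambda_k y$ is precisely $\tilde{y} \mapsto (1+\lambda_k) - \lambda_k \tilde{y}$ in the new variables, so your operator is essentially the paper's pulled back to $(x,y)$---but the paper packages the role of \eqref{a3.1} once and for all into a norm-equivalence statement that it reuses elsewhere (e.g.\ in the proof of the subsequent corollary), whereas you expose the exponent cancellation term by term. Your version is more self-contained and makes the mechanism of \eqref{a3.1} very explicit; the paper's is shorter and more modular.
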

\begin{proof}
We may apply a coordinate change
\begin{eqnarray*}
\tilde{x}=\int_{x}^1 \frac{1}{h(x)} \ \ \ \ \text{and} \ \ \tilde{y}=\frac{y}{h(x)}
\end{eqnarray*}
to map the domain to a half infinite cylinder $\tilde{\Omega}= \{ \ (\tilde{x},\tilde{y}) \ |\ 1 \le \tilde{x} \text{, } -1 \le \tilde{y} \le 1 \ \}$ and define the norm
$$
\| u \|^2 _{H^{(m,\gamma)}(\tilde{\Omega})} =\sum_{ \substack{t,s \in \mathbb{N} \\0 \le t+s \le m } } \int_{\tilde{\Omega}} |h^{-\gamma} \partial_{\tilde{x}}^s \partial_{\tilde{y}}^t u|^2.
$$
With the condition (\ref{a3.1}), it is straightforward to see that there exists positive constants $C_1$ and $C_2$ such that
\begin{equation*}
C_1 \|u\|_{H^{(m,\gamma)}(\tilde{\Omega})} \le \|u\|_{H^{(m,\gamma)}(\Omega)} \le C_2 \|u\|_{H^{(m,\gamma)}(\tilde{\Omega})}.\\
\end{equation*}
Then we may define $E$ on $H^{(m,\gamma)}(\tilde{\Omega})$ by extending  $u \in H^{(m,\gamma)}(\tilde{\Omega})$ across the boundary $\{\tilde{y}= \pm 1 \}$ and require that $u$ vanishes outside $\{ |\tilde{y}| \le 2 \}$. Change the coordinate back for the desired result.
\end{proof}
\begin{coro} \label{c3.1}
With the same assumption and the extension $E$ defined in \lemmaref{l3.1}, we have $$\|Eu\|_{H^{(m,\gamma)}(\mathbb{R}^2_+)} \le C_{\gamma} \|Eu\|_{H^{\gamma}(\mathbb{R}^2_+)} $$ for $u \in C^{\infty}(\Omega)$ and vanishes in a neighborhood or the origin.
\end{coro}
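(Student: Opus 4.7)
The plan is to trade each negative power of $h$ in the weight $h^{-\gamma+t+s}$ for one additional $y$-derivative, via an iterated one-dimensional Poincar\'{e} inequality on the narrow $y$-slice $(-2h(x),2h(x))$. The decisive structural input is that by \lemmaref{l3.1} the extension $Eu$ is smooth on $\mathbb{R}^2_+$ and identically zero outside $\{|y|\le 2h(x)\}$; consequently, for every fixed $x$ and every nonnegative integer $j$, the function $\partial_x^s\partial_y^{t+j}Eu(x,\cdot)$ is a $C^\infty$ function supported in $[-2h(x),2h(x)]$, and hence vanishes together with all of its $y$-derivatives at the endpoints $y=\pm 2h(x)$.

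The first step is the standard one-dimensional Poincar\'{e} inequality: for any nonnegative $s,t$,
\begin{equation*}
\int_{-2h(x)}^{2h(x)}\bigl|\partial_x^s\partial_y^tEu\bigr|^2\,dy\le C\,h(x)^2\int_{-2h(x)}^{2h(x)}\bigl|\partial_x^s\partial_y^{t+1}Eu\bigr|^2\,dy.
\end{equation*}
The boundary-vanishing observation above means this inequality can be reapplied to $\partial_y^{t+1}Eu$, and inductively to $\partial_y^{t+j}Eu$ for any $j\ge 0$. Iterating $\gamma-t-s$ times (meaningful in the regime $t+s\le\gamma$, i.e.\ $m\le\gamma$, which matches how the estimate is eventually used in \theoremref{t2}), then dividing through by the constant $h(x)^{2(\gamma-t-s)}$ and integrating in $x$, one obtains
\begin{equation*}
\bigl\|h^{-\gamma+t+s}\partial_x^s\partial_y^tEu\bigr\|_{L^2(\mathbb{R}^2_+)}^2\le C\,\bigl\|\partial_x^s\partial_y^{\gamma-s}Eu\bigr\|_{L^2(\mathbb{R}^2_+)}^2\le C\,\|Eu\|_{H^\gamma(\mathbb{R}^2_+)}^2.
\end{equation*}
Summing over the finitely many pairs $(s,t)$ with $t+s\le m$ then delivers the corollary, with a constant controlled purely by $\gamma$.

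The principal point to verify is the justification of each Poincar\'{e} iterate: one must know that every successive $y$-derivative of $Eu$ still vanishes on the moving boundary $|y|=2h(x)$. Because $Eu$ is smooth and supported in $\{|y|\le 2h(x)\}$ by \lemmaref{l3.1}, all higher $y$-derivatives automatically vanish there, so no additional modification of $E$ is required. The hypothesis that $u$ vanishes in a neighborhood of the origin enters only to keep $Eu$ supported away from the cusp tip, so that every integral in the argument is finite and the powers of $h(x)$ that we redistribute never introduce degeneracy beyond what the Poincar\'{e} trade absorbs.
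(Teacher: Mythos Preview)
Your proof is correct and follows essentially the same route as the paper: both trade each negative power of $h$ for an additional $y$-derivative via the fundamental theorem of calculus plus Cauchy--Schwarz (the one-dimensional Poincar\'e step), exploiting that $Eu$ vanishes along $|y|=2h(x)$. The only cosmetic difference is that the paper writes out a single step of the iteration and then invokes a density approximation $w_i\to Eu$ in $H^\gamma$, whereas you argue directly with $Eu$ by taking it to be smooth; either variant closes the argument.
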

\begin{proof}
For $w \in C^{\infty}(\mathbb{R}^2_+)$, vanishes outside $y= \pm 2h$ and also vanishes in a neighborhood of the origin, we may first compute on $\mathbb{R}^2_+ \cap \{y \ge 0 \}$
\begin{eqnarray*}
&&
\int_{0}^1 \int_{0}^{2h(x)}|h^{-\gamma+t+s} \partial_x^s \partial_y^t w |^2 dydx\\
&=&\int_{0}^1 \int_{0}^{2h(x)}h^{2(-\gamma+t+s)}| \partial_x^s \partial_y^t w(x,y) -\partial_x^s \partial_y^t w(x,2h(x))|^2 dydx\\
&=&\int_0^1\int_{0}^{2h(x)}h^{2(-\gamma+t+s)} |  \int_y^{2h(x)} \partial_x^s\partial_y^{t+1}w(x,\theta) d\theta   |^2 dydx\\
&\le& \int_0^1 \int_0^{2h(x)} h^{2(-\gamma+1+t+s)} |\partial_x^s \partial_y^{t+1}w(x, \theta)|^2d\theta dx,
\end{eqnarray*}
where the last inequality is the Holder's inequality. Then our hypothesis on $u$ allows us to find a family $\{ \ w_i \ | \ w_i \in C^{\infty}(\Omega) \ \}$ such that $ w_{i} \rightarrow Eu$ in $H^{\gamma}(\Omega)$ as $i \rightarrow \infty$. The case $y \le 0$ follows similarly.
\end{proof}
We also record the following Gagliardo-Nirenberg's inequality, whose proof can be found in \cite{MR2744149}.
\begin{lemma} \label{l3.3}
Let all $u_i$ below are $C^m(\mathbb{R}^2_+)$\\
(i)If $\alpha_1,....,\alpha_l$ are multi-indices such that $|\alpha_1|+....+|\alpha_l|=m$, then there exists a constant $C_1$ depending on $l$ and $m$ such that
\begin{eqnarray*}
&\| \partial^{\alpha_1}u_1&...\partial^{\alpha_l}u_l\|_{L^2(\mathbb{R}^2_+)} \\
&\le& C \sum_{j=1}^l(|u_1|_{L^{\infty}(\mathbb{R}^2_+)}.... \widehat{|u_j|}_{L^{\infty}(\mathbb{R}^2_+)}...|u_l|_{L^{\infty}(\mathbb{R}^2_+)}) \| u_j \|_{H^m(\mathbb{R}^2_+)},\\
\end{eqnarray*}
where $\widehat{|u_j|}_{L^{\infty}(\mathbb{R}^2_+)}$ denotes the omitted term in the product.\\
(ii) Let $\mathcal{D} \subset \mathbb{R}^l$ be compact and contain the origin, and let $G \in C^{\infty}(\mathcal{D})$. If $u \in H^m(\Omega, \mathcal{D})$, then there exists a constant $C_2$ depending on $m$ such that
$$
\|G \circ u \|_{H^m(\Omega)} \le C_2 |u|_{L^{\infty}(\Omega)}(|G(0)|+\|u\|_{H^m(\Omega)}).
$$
\end{lemma}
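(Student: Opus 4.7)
\textbf{Proof plan for \lemmaref{l3.3}.} This is the classical Moser-type tame estimate, so I will follow the standard route already outlined in \cite{MR2744149}. Both parts reduce to the Gagliardo–Nirenberg interpolation inequality on the half-space $\mathbb{R}^2_+$ combined with Hölder's and Young's inequalities; part (ii) will be deduced from (i) via Faà di Bruno's formula.

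For part (i), I would first choose Hölder exponents $p_j = 2m/|\alpha_j|$ (with the convention $p_j = \infty$ when $|\alpha_j|=0$, which requires only an $L^\infty$ bound on the corresponding factor). Because $\sum_j |\alpha_j| = m$ by hypothesis, these exponents satisfy $\sum_j 1/p_j = 1/2$, so Hölder gives
$$
\|\partial^{\alpha_1}u_1\cdots\partial^{\alpha_l}u_l\|_{L^2(\mathbb{R}^2_+)} \le \prod_{j=1}^l \|\partial^{\alpha_j}u_j\|_{L^{p_j}(\mathbb{R}^2_+)}.
$$
The Gagliardo–Nirenberg interpolation on $\mathbb{R}^2_+$ (obtained from the standard inequality on $\mathbb{R}^2$ via a bounded extension operator) then yields
$$
\|\partial^{\alpha_j}u_j\|_{L^{p_j}(\mathbb{R}^2_+)} \le C\,\|u_j\|_{L^\infty(\mathbb{R}^2_+)}^{1-\theta_j}\,\|u_j\|_{H^m(\mathbb{R}^2_+)}^{\theta_j}, \qquad \theta_j = |\alpha_j|/m.
$$
Since $\sum_j \theta_j = 1$, applying the weighted AM–GM inequality $\prod a_j^{\theta_j} \le \sum_j \theta_j a_j$ to the product of these interpolation inequalities produces precisely the sum-form bound stated in (i).

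For part (ii), I would expand $\partial^\alpha(G\circ u)$ using Faà di Bruno's formula as a finite linear combination of terms of the form $(G^{(k)}\circ u)\,\partial^{\beta_1}u\cdots\partial^{\beta_k}u$ with $\sum_i |\beta_i| = |\alpha| \le m$. The $k=0$ piece is just $G\circ u$, which I would split as $G(0) + (G\circ u - G(0))$; the fundamental theorem of calculus applied to the second summand supplies the factor $|u|_{L^\infty(\Omega)}$ while $G(0)$ contributes the explicit $|G(0)|$ term. For $k\ge 1$, the hypothesis that $u$ takes values in the compact set $\mathcal{D}$ on which $G$ is smooth makes $\|G^{(k)}\circ u\|_{L^\infty}$ bounded by a constant depending only on $G$ and $\mathcal{D}$, and applying part (i) to the product $\partial^{\beta_1}u\cdots\partial^{\beta_k}u$ gives a sum of terms of the form $\|u\|_{L^\infty}^{k-1}\|u\|_{H^m}$. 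Collecting one factor of $\|u\|_{L^\infty}$ out front then produces the stated bound.

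The calculation is essentially bookkeeping once the Gagliardo–Nirenberg interpolation is in hand; the only mildly delicate point is the verification of that interpolation on the half-space and the careful tracking through Faà di Bruno of where the $|G(0)|$ and $|u|_{L^\infty}$ factors appear, so I would expect no significant obstacle.
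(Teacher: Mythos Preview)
The paper does not actually prove this lemma; it simply records the statement and refers the reader to \cite{MR2744149}. Your sketch follows the classical route to these Moser-type tame estimates --- H\"older's inequality with exponents $p_j = 2m/|\alpha_j|$ followed by Gagliardo--Nirenberg interpolation and AM--GM for part (i), and Fa\`a di Bruno's formula reducing part (ii) to part (i) --- which is precisely the argument one finds in the cited reference and in standard treatments of Nash--Moser theory, so there is nothing substantive to compare.
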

The following theorem of smoothing operator is proved in \cite{MR2765727}.
\begin{lemma} \label{l3.4}
Suppose $\overline{H}^l(\mathbb{R}^2_+)$ is the completion of $C^{\infty}$ function vanishing in a neighborhood of y-axis with respect to the usual Sobolev norm, $\| \cdot \|_l$. Given $\mu \ge 1$ there exists a linear smoothing operator $S_{\mu}:L^2(\mathbb{R}^2_+) \rightarrow \overline{H}^{\infty} (\mathbb{R}^2_+)$ such that for all $l,m \in \mathbb{Z}_{ \ge 0}$ and $u \in \overline{H}^l(\mathbb{R}^2_+)$,
\begin{enumerate}
\item $\| S_{\mu} u \|_m \le C_{l,m} \|u \|_l, \ \ \ m \le l,$
\item $\|S_{\mu} u \|_m \le C_{l,m} \mu^{m-l} \|u \|_l, \ \ \ l \le m,$
\item $\| u - S_{\mu} u \|_m \le C_{l,m} \mu^{m-l} \|u \|_l, \ \ \ m \le l.$
\end{enumerate}
We also have the standard smoothing operator $S'_{\mu}: L^2(\mathbb{R}^2_+) \rightarrow H^{\infty}(\mathbb{R}^2_+)$ such that $(1)$ to $(3)$ hold whenever $u \in H^l(\mathbb{R}^2_+)$.
\end{lemma}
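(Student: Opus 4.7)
The plan is to construct both operators as Friedrichs-type mollifiers, with a directional shift in the kernel of $S_\mu$ to guarantee that the smoothed function vanishes in a neighborhood of the $y$-axis whenever $u$ does.

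First I would handle $S'_\mu$, which is entirely standard. Choose $\phi \in C_c^\infty(\mathbb{R}^2)$ with $\int \phi = 1$, and set $\phi_\mu(z) = \mu^2 \phi(\mu z)$. Given $u \in H^l(\mathbb{R}^2_+)$, extend $u$ to $\tilde u \in H^l(\mathbb{R}^2)$ by a Calder\'on--Stein extension so that $\|\tilde u\|_{H^l(\mathbb{R}^2)} \le C_l \|u\|_{H^l(\mathbb{R}^2_+)}$, and define $S'_\mu u = (\tilde u * \phi_\mu)|_{\mathbb{R}^2_+}$. Properties $(1)$ and $(2)$ then follow from Young's inequality together with the scaling estimate $\|\partial^\alpha \phi_\mu\|_{L^1} \le C \mu^{|\alpha|}$; property $(3)$ follows from the standard mollifier bound $\|u - \phi_\mu * u\|_{H^m} \le C_{l,m}\mu^{m-l}\|u\|_{H^l}$, proved by the Taylor identity $u(x) - u(x - z) = \int_0^1 \nabla u(x - tz) \cdot z \, dt$ iterated $l$ times against the kernel.

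For $S_\mu$ I would use a one-sided kernel so that the image automatically lies in $\overline{H}^\infty(\mathbb{R}^2_+)$. Pick $\psi \in C_c^\infty(\mathbb{R}^2)$ with $\int \psi = 1$ and $\mathrm{supp}\,\psi \subset [1,2] \times [-1,1]$, and for $u \in \overline{H}^l(\mathbb{R}^2_+)$ extended by zero, set
$$
S_\mu u(x,y) = \mu^2 \int_{\mathbb{R}^2} \psi\bigl(\mu(x-x'),\mu(y-y')\bigr) u(x',y')\, dx'\, dy'.
$$
The integration is confined to $x' \in [x - 2/\mu, x - 1/\mu]$, so whenever $x \le 1/\mu$ the sample points satisfy $x' \le 0$ and the integrand vanishes. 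Hence $S_\mu u$ is smooth, vanishes in a neighborhood of the $y$-axis, and lies in $\overline{H}^\infty(\mathbb{R}^2_+)$. The kernel retains unit mass and has support shrinking at rate $O(\mu^{-1})$, so $(1)$ and $(2)$ are proved by the same Young/scaling argument as for $S'_\mu$.

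The main technical point is property $(3)$ for $S_\mu$: the usual proof relies on $\phi_\mu$ concentrating at the origin, whereas here the kernel concentrates near $(1/\mu, 0)$, slightly off the diagonal. This is harmless because the shift is itself of order $\mu^{-1}$. Writing $S_\mu u(x,y) = \int \psi(\zeta)\, u\bigl(x - \mu^{-1}\zeta_1, y - \mu^{-1}\zeta_2\bigr)\, d\zeta$ and expanding $u(x - \mu^{-1}\zeta_1, y - \mu^{-1}\zeta_2)$ by Taylor's theorem to order $l$, the unit-mass and compact-support properties of $\psi$ yield $\|u - S_\mu u\|_{H^m} \le C_{l,m}\mu^{m-l}\|u\|_{H^l}$ for $m \le l$, after using density of functions supported in $\{x \ge \delta\}$ in $\overline{H}^l$ to justify the integration by parts. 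This matches the construction given in \cite{MR2765727}, so no additional argument is required.
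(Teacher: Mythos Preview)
The paper does not actually prove this lemma; it simply cites \cite{MR2765727}. Your shifted-kernel construction is a natural idea and does correctly force $S_\mu u$ to vanish near the $y$-axis. However, your argument for property~(3) has a genuine gap, and it affects both $S'_\mu$ and $S_\mu$.

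A kernel with only $\int\psi=1$ and compact support does \emph{not} give $\|u-\psi_\mu*u\|_m\le C\mu^{m-l}\|u\|_l$ once $l-m\ge 2$. In the Taylor expansion of $u(x-\mu^{-1}\zeta)$ the intermediate terms of order $1\le|\alpha|\le l-m-1$ contribute $\mu^{-|\alpha|}\bigl(\int\zeta^\alpha\psi\,d\zeta\bigr)\,\partial^\alpha u(x)$, and nothing in your hypotheses kills these moments. For a concrete failure take $u(x,y)=x$ (suitably cut off): then $u-S_\mu u=\mu^{-1}\int\zeta_1\psi$ is a nonzero constant, so $\|u-S_\mu u\|_0\sim\mu^{-1}$ rather than the $\mu^{-l}$ demanded by~(3) for $l\ge 2$. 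The ``iterate the Taylor identity $l$ times'' step you invoke does not remove these lower-order terms; it only controls the remainder.

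The remedy is to impose moment conditions: require $\int\zeta^\alpha\psi\,d\zeta=0$ for $1\le|\alpha|\le N$. A compactly supported $\psi$ can satisfy only finitely many such conditions, which suffices if one restricts to $l\le N$ (and this is enough for the paper, where $l\le m_0$ throughout). If one insists on~(3) for all $l,m$, the standard route uses a Schwartz kernel with $\hat\psi\in C_c^\infty$ and $\hat\psi\equiv 1$ near $0$; the vanishing near $\{x=0\}$ is then enforced by a separate cutoff, which is exactly what the paper's remark after the lemma (``the construction of $S_\mu$ involves cutting off near the origin'') indicates is done in \cite{MR2765727}.
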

We compare the two smoothing operators. The construction of $S_{\mu}$ involves cutting off near the origin. Therefore $S_{\mu}u$ vanishes near the origin and the boundedness requires that $u$ vanishes near the origin. The standard operator $S'_{\mu}$ on the other hand consists only of convolution with smooth functions, so its boundedness does not require vanishing.

To obtain the solution for our nonlinear PDE in the elliptic region, we need to apply the Nash-Moser iteration. The estimate we derived in \theoremref{t2} is the standard form for the iteration, and it is true that the standard procedure works in our case.

\begin{pro} \label{p3.5}
If $m_0 \ge 2N+9$, then we have a sequence,
$$
\{ \ w_n \ | \ \ \ w_n \in \overline{H}^{m_0-8}(\Omega)  \ \ \ \text{and } \ w_n|_{\partial \Omega}=0 \ \},
$$
such that $w_n \rightarrow w$ in $\overline{H}^{m_0-8}(\Omega)$, with $$\|w\|_{m_0-8} \le C \varepsilon^{2N+6}.$$  Furthermore, $\Phi(w_n) \rightarrow 0$ in $C^0(\Omega)$.
\end{pro}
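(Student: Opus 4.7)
The plan is the standard Nash--Moser iteration, following the framework of Han--Khuri \cite{MR2765727}, with Theorem~\ref{t2}, Lemma~\ref{l3.3} and Lemma~\ref{l3.4} as the three analytic inputs.

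I would first initialize with $w_0 = 0$. The approximate solution $z_0$ recalled in Section~2 was arranged so that $\Phi(0)$ vanishes to high order at the origin; after the $\varepsilon^2$-rescaling of coordinates, this yields initial smallness of the form $\|\Phi(0)\|_{H^m(\mathbb{R}^2_+)} \le C_m \varepsilon^{2N+\sigma_m}$ with $\sigma_m$ growing with $m$. The iterates are then defined by $w_{n+1} = w_n + v_n$, where $v_n$ is the solution, furnished by Theorem~\ref{t2}, of the smoothed linearized problem
$$\mathcal{L}(S_{\mu_n} w_n)\, v_n \;=\; -\, S'_{\mu_n}\Phi(w_n),$$
with $\mu_n = \mu_0 \theta^n$ for some $\theta>1$ to be tuned. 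The operator $S_{\mu_n}$ from Lemma~\ref{l3.4} preserves the vanishing of $w_n$ near the $y$-axis, so that the hypotheses of Theorem~\ref{t2} remain available at each step, while $S'_{\mu_n}$ is applied to the residual.

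The heart of the proof is the inductive estimate. The quadratic Taylor expansion
$$\Phi(w_{n+1}) \;=\; \Phi(w_n) + \mathcal{L}(w_n)v_n + Q(v_n;w_n),$$
combined with the defining equation for $v_n$, decomposes the next residual as
$$\Phi(w_{n+1}) \;=\; (I-S'_{\mu_n})\Phi(w_n) \;+\; \bigl(\mathcal{L}(w_n) - \mathcal{L}(S_{\mu_n}w_n)\bigr)v_n \;+\; Q(v_n;w_n),$$
that is, a smoothing error, a coefficient--smoothing error, and a quadratic error. Lemma~\ref{l3.4} controls the smoothing operators by negative powers of $\mu_n$; Lemma~\ref{l3.3} supplies the Moser--Nirenberg bounds on the products and compositions in $Q$ and in the coefficient difference; and Theorem~\ref{t2} gives
$$\|v_n\|_{H^m(\mathbb{R}^2_+)} \;\le\; C_m\bigl(\|w_n\|_{H^{m+6}}\|\Phi(w_n)\|_{H^2} + \|\Phi(w_n)\|_{H^m}\bigr).$$
Combining these yields a recursion of the form
$$\|\Phi(w_{n+1})\|_{H^a} \;\le\; C\mu_n^{-b}\|\Phi(w_n)\|_{H^{a+b}} + C\|v_n\|_{H^c}^2,$$
while the high-norm iterate is controlled by $\|w_{n+1}\|_{H^{m_0-8}} \le \|w_n\|_{H^{m_0-8}} + \|v_n\|_{H^{m_0-8}}$. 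With $\theta$ slightly larger than $1$ and $\varepsilon$ sufficiently small, a standard bootstrap shows that $\|\Phi(w_n)\|_{H^2}$ decays geometrically in $n$ while $\|w_n\|_{H^{m_0-8}}$ stays bounded by $C\varepsilon^{2N+6}$ uniformly, so that $\sum v_n$ converges in $\overline{H}^{m_0-8}(\Omega)$ to the claimed limit $w$; Sobolev embedding on $\mathbb{R}^2_+$ then converts the geometric $H^2$-decay of $\Phi(w_n)$ into the stated $C^0$ convergence.

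The main obstacle will be the balancing of indices, since Theorem~\ref{t2} loses six derivatives in the coefficient: fixing a working index for both the residual and the high-norm iterate demands enough ``room'' above and below, and the hypothesis $m_0 \ge 2N+9$ should emerge as exactly the threshold at which the smoothing-error decay dominates both the quadratic error and the loss of derivatives in the scheme. A secondary delicate point is propagating the $C^6$-smallness hypothesis $|S_{\mu_n}w_n|_{C^6}\le 1$ required by Theorem~\ref{t2}, which by Sobolev embedding reduces to the uniform $H^{m_0-8}$ bound once $\varepsilon$ is chosen small at the outset.
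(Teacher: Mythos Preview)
Your proposal is correct and follows the same approach as the paper, which likewise invokes the standard Nash--Moser iteration and defers the details to \cite{MR816670}. One clarification worth making: the paper does not attribute the threshold $m_0 \ge 2N+9$ to derivative-loss balancing in the scheme, but rather to the requirement $\|w\|_{m_0-8} \le C\varepsilon^{2N+6}$ (needed later for patching across the zero set)---this is achieved by arranging the approximate solution so that $\|\Phi(z_0)\|_{m_0-2N-8} \le \varepsilon^{2N+6}$, and the constraint $m_0 - 2N - 8 \ge 1$ is the source of the numerology; the $\varepsilon^{2N+6}$ is then propagated through the iteration rather than emerging from it.
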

\begin{proof}
This follows from the standard Nash-Moser iteration, whose proof can be found, for example, in \cite{MR816670}. We will present the details of Nash-Moser iteration in the case when $K$ is negative in a cusp domain, which is more complicated. Here we instead make two remarks. First, we require a stronger condition on the solution $w$ that $\|w\|_{m_0-8} \le C \varepsilon^{2N+6}$ for the construction of a smooth solution across the zero set of $K$. This is where the condition $m_0 \ge 2N+9$ is needed. We may then require our approximate solution $z_0$ to satisfy $\| \Phi(z_0)\|_{m_0-2N-8} \le \varepsilon^{2N+6}$. In the details of Nash-Moser iteration we provided later, it will be clear that this $\varepsilon^{2N+6}$ be passed in the iterative process, so that the solution $w$ we obtain is $O(\varepsilon^{2N+6})$ as in the statement.

Secondly, we started the derivation from weighted spaces over $\tilde{\Omega}$ but ended up with the estimate in the regular Sobolev spaces over $\Omega$ and ran Nash-Moser iteration there. The main tool for the transfer is \cororef{c3.1}, whose proof indicates that the coefficients in the estimate depend on the domain where the functional space is defined on. We point out that the domain $\tilde{\Omega}$ depends on the function $w$ at which the equation is linearized. Therefore we always change the coordinates back to $\Omega$ first and then apply \cororef{c3.1}, to avoid  the dependence of the constant on $w$. Similar issues happened when dealing with the case $K$ negative in a cusp domain in the later sections.
\end{proof}

The complement of $\Omega$, in whose interior $K$ is negative, is (at least) a Lipschitz domain and has been dealt with in \cite{MR2765727}. We record the result,
\begin{pro} \label{p3.6}
Suppose $m_0 \ge 3N+19, \phi \in H^{m_0- 9}(\Omega)$ and $\psi \in H^{m_0-10}(\Omega)$. Moreover, suppose
$$
\| \phi \|_{m_0-9, \partial{\Omega}} + \| \psi \|_{m_0-10, \partial{\Omega}} \le C \varepsilon^{2N+6}.
$$
Then there exists a sequence $w_n \rightarrow w$ in $\overline{H}^{m_0-N-13}(\Omega)$. Furthermore $\Phi(w_n) \rightarrow 0$ in $C^0(\Omega)$.
\end{pro}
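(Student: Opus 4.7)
The statement is recorded verbatim from the Han--Khuri paper \cite{MR2765727}, which treats precisely the hyperbolic (negative curvature) side of a zero-set configuration when the boundary is Lipschitz. The plan is to sketch how the Nash--Moser iteration established there produces the sequence $w_n$ with the claimed convergence, and to verify that the hypotheses under which that iteration operates match those placed on $(\phi,\psi)$ here.

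First I would identify the setup. The domain in the proposition is the complement of the elliptic cusp of the preceding section, which by assumption is (at least) Lipschitz with $\partial\Omega$ coinciding with part of the zero set $K^{-1}(0)$. The functions $\phi,\psi$ prescribe Cauchy values on $\partial\Omega$: in the overall patching strategy they come from the trace of the elliptic-side solution of \proref{p3.5}, and their smallness $\|\phi\|_{m_0-9,\partial\Omega}+\|\psi\|_{m_0-10,\partial\Omega} \le C\varepsilon^{2N+6}$ is inherited from the bound $\|w\|_{m_0-8} \le C\varepsilon^{2N+6}$ there. Since $K<0$ in the interior, the linearized operator (after reduction to the canonical form of \lemmaref{l1}) is degenerate hyperbolic with Cauchy surface $\partial\Omega$, and \cite{MR2765727} supplies a tame linear estimate with a fixed loss of roughly $N$ derivatives plus a constant.

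The Nash--Moser scheme itself is standard in form: set $w_0 = 0$ and define inductively $w_{n+1} = w_n + u_n$, where $u_n$ solves
$$
L(S_{\mu_n}w_n)\, u_n = -S_{\mu_n}\Phi(w_n) \quad \text{in } \Omega,
$$
with Cauchy data $(\phi,\psi)$ prescribed on $\partial\Omega$, using the smoothing operators of \lemmaref{l3.4} and a super-geometric sequence $\mu_n = \mu_0^{(3/2)^n}$. Pairing the tame linear estimate with the quadratic remainder bound $\Phi(w_n+u_n) - \Phi(w_n) - L(w_n)u_n = O(|u_n|^2)$ yields geometric decay of $\|u_n\|$ in the appropriate Sobolev norm, so $w_n$ converges in $\overline{H}^{m_0-N-13}(\Omega)$. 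The drop from $m_0$ to $m_0-N-13$ accounts for the $N$ derivatives lost in the linear inversion together with a fixed buffer ($\approx 13$) produced by the smoothing operators, commutator terms, and the quadratic error. The hypothesis $m_0 \ge 3N+19$ guarantees that this regularity index is large enough for Sobolev embedding to give $\Phi(w_n) \to 0$ in $C^0(\Omega)$.

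The substantive work --- constructing the degenerate-hyperbolic linear solver with a Lipschitz Cauchy surface and controlling its loss-of-derivatives structure through the Nash--Moser scheme --- has already been carried out in \cite{MR2765727}. The obstacle I would anticipate if I had to redo it is ensuring that the Cauchy data $(\phi,\psi)$ are preserved exactly at every step of the iteration, so that the resulting $w$ can later be patched smoothly with the elliptic-side solution across $K^{-1}(0)$, and simultaneously tracking the weighted-norm indices through the quadratic error bound near $\partial\Omega$ where $K$ degenerates; the cited work handles both, and the present proposition is merely a record of its output in the form required here.
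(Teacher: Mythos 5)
The paper offers no independent proof of this proposition: it simply records the result of Han--Khuri \cite{MR2765727} for the (at least Lipschitz) region where $K<0$, which is precisely your approach, so your proposal matches the paper's treatment. The one small wrinkle in your sketch is that in the cited scheme the Cauchy data $(\phi,\psi)$ are attained once by an initial function $w_{0}$ constructed to satisfy the boundary conditions, while the corrections $u_{n}$ carry homogeneous data (prescribing $(\phi,\psi)$ on every $u_{n}$, as literally written, would multiply the data along the sum); this is exactly the bookkeeping issue you flag and correctly defer to \cite{MR2765727}.
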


We require $m_0 \ge 2N+9$ and construct $w_{\kappa}^+ \in \overline{H}^{m_0-8}(\Omega^+_{\kappa})$ by \proref{p3.5} satisfying
\begin{eqnarray*}
&&\Phi(w_{\kappa}^+)=0 \text{ in } \Omega_{\kappa} \text{, } \ \ \ \ \ \ w_{\kappa}^+=0 \text{ on } \partial \Omega_{\kappa}^+.
\end{eqnarray*}
Let $\Omega_{\varrho}^-$ be a region sharing a boundary with $\Omega_{\kappa}^+$. Then by \proref{p3.6}, we may construct $w_{\varrho}^- \in \overline{H}^{m_0-N-13}(\Omega_{\varrho}^-)$ satisfying
\begin{eqnarray*}
&&\Phi(w_{\varrho}^-)=0 \ \text{ in } \ \Omega_{\varrho}^- \text{, } \ \ \ \ \ w_{\varrho}^-|_{\partial \Omega_{\varrho}^-} =0 \ \ \ \  \text{ and }  \ \ \partial_{\nu}w_{\varrho}^-|_{\partial \Omega_{\varrho}^-}=\partial_{\nu}w_{\kappa}^+|_{\partial \Omega_{\varrho}^-},\\
&& \partial^{\tau} w_{\varrho}^-(0,0)=0, \ \ \ \ \ \text{ for } \ \ |\tau| \le m_0-N-15,
\end{eqnarray*}
where we have used the Sobolev embedding theorem and the trace operator. Lastly, by the choice of the approximate solution, the fact that $w_{\kappa}^+$ is in a weighted space and $|h|_{C^1(|x|\le \sigma)}$ decreases to zero as the small positive parameter $\sigma \rightarrow 0$, the common boundary of any pair of $\Omega_{\kappa}^+$ and $\Omega_{\varrho}^-$ is noncharacteristic. So the solutions in each region agree to $m_0-N-15$ along the boundary, and therefore patched to a solution of (\ref{a1}) in a small neighborhood of the origin.

\section{Linear theory in hyperbolic cusps} \label{s.8}
In this section, we derive the existence and regularity for the Cauchy problem of the linearized version of equation (\ref{a1}) in a hyperbolic cusp. In contrast to Section \ref{s2}, we do the case when $K$ is negative in a cusp domain. We need to apply \lemmaref{l1}, with some change of the notations of the linearized equation:

\begin{equation}
L(w)u= \partial_{\bar{x}} (\bar{K} \partial_{\bar{x}})u + \partial^2_{\bar{y}}u+ \bar{C} \partial_{\bar{x}}u + \bar{D} \partial_{\bar{y}} u, \label{a4.1}
\end{equation}
with
\begin{eqnarray*}
\bar{K} &=& K \bar{k}(x,y,w, \nabla w,\nabla ^2 w, \nabla \bar{x}),\\
\bar{C} &=& K \bar{c}(x,y,w, \nabla w,\nabla ^2 w, \nabla ^3 w, \nabla \bar{x}, \nabla ^2 \bar{x})+(a^{22})^{-2} \partial_{x} \Phi (w) \partial_{x} \bar{x} \partial_{\bar{x}} u,\\
\bar{D} &=& \varepsilon^2 \bar{d}(x,y,w, \nabla w,\nabla ^2 w).
\end{eqnarray*}
In addition, hereafter $R^2_+ = \{ (x, y) \ | \ y \ge 0 \}$. We also remark that the intersection of the original domain $\hat{ \Omega }$ with a small neighborhood of the origin may be written in the new coordinates as
$$
\Omega  = \{ \ ({x}, {y}) \ | \ \ \ |{x}| \le \tilde{h}( y ), \ \ 0 \le y \le 1 \ \},
$$
with $\tilde{h}$ is a rescaling of the initial graph $h$ by $\varepsilon^{-2}$. Again for simplicity we will in what follows continue to denote the graph by $h$ instead of $\tilde{h}$. With this notation the condition (\ref{a0}) to (\ref{a0.5}) remain unchanged but (\ref{cusp parameter}) becomes
\begin{equation}\label{cusp parameter h}
h \geq C \varepsilon^{\bar{\alpha}} y^{1+\bar{\alpha}}.
\end{equation}
Moreover, after the coordinate change to $(\bar{x}, \bar{y})$ the domain $\Omega$ becomes,
$$
\bar{ \Omega } = \{ \ (\bar{x}, \bar{y}) \ | \ \ \ |\bar{x}| \le \bar{h}( \bar{y} ), \ \ 0 \le \bar{y} \le 1 \ \}.
$$
Notice that for $\bar{h}$, the constant in the condition (\ref{a0.5}) depends on the function $w$, at which the equation is linearized. This does no harm in the proof of existence but causes trouble for the energy estimate. Our assumption that $|w|_{C^4} \le 1$ removes this dependence for lower derivatives, and our treatment for higher derivatives will be elaborated when they appear.

Equation (\ref{a4.1}) is degenerate hyperbolic. We will instead solve the regularized equation
\begin{eqnarray} \label{a4.3}
&&L_{\theta}u=f \ \text{ in } \bar{\Omega}, \ \ \ \  u|_{\partial \bar{\Omega}_1}=\phi, \ \ \ \ u_y|_{\partial \bar{\Omega}_1}=\psi, \end{eqnarray}
where $L_{\theta}$ differs from $L$ only in that $\bar{K}$ is replaced by $\bar{K}_{\theta}=\bar{K}-\theta \bar{h}'^2$, and $\theta =  | \Phi (w) |_{C^5}$. We will require the Cauchy data $\phi$ and $\psi$ as well as $f$ to vanish to high order at the origin. We denote the bottom part and the upper part of the boundary of $\bar{\Omega}$ by $\partial \bar{\Omega}_1$ and $\partial \bar{\Omega}_2$, respectively. We define $H^{(m,l,\gamma)}(\bar{\Omega})$($H_0^{(m,l,\gamma)}(\bar{\Omega})$) to be the closure of all $C^{\infty}(\bar{\Omega})$ functions (which vanish to all orders at $\partial \bar{\Omega}_1$) with respect to the norm
\begin{equation*}
\| u \|^2_{(m,l,\gamma,\bar{\Omega})}= \int_{\bar{\Omega}} \sum_{\substack{s \le m, t \le l \\ s+t \le \max(m,l)}} | h^{-\gamma+t+s} \partial_{\bar{x}}^s  \partial_{\bar{y}}^t u |^2.
\end{equation*}
We first obtain the following existence theorem for (\ref{a4.3}) with homogeneous Cauchy data and $f$ vanishing to all order at $\partial \bar{\Omega}_1$:
\begin{theorem} \label{t4.2}

Suppose $g \in C^{m_0}$, $w \in C^{\infty}$, $|w|_{C^4(\Omega)} \le 1$ and $f \in H_0^{(m,l,\gamma)}(\bar{\Omega})$. If $ m \le m_0-6 $ and $\varepsilon=\varepsilon(m)$ is sufficiently small, then there exist a weak solution $u_{\theta} \in H^{(m,1,\gamma)}(\bar{\Omega})$ of (\ref{a4.3}) with $\phi, \psi=0$. That is,
$$
(u_{\theta}, L_{\theta}^*v)=(f,v), \ \ \ \text{ for all } \ v \in C^{\infty}(\bar{\Omega})
$$
with $v|_{\partial \bar{\Omega}_2}=v_{\bar{y}}|_{\partial \bar{\Omega}_2}=0$ and $v$ vanishes in a neighborhood of the origin.
\end{theorem}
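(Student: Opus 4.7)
The plan parallels the argument for \theoremref{t1}: establish an a priori estimate for the formal adjoint $L_\theta^*$ on an admissible class of test functions, then invoke Hahn-Banach and Riesz representation. Let $\hat{C}^\infty(\bar{\Omega})$ denote the smooth $v$ on $\bar\Omega$ that vanish to first order on the upper boundary $\partial\bar{\Omega}_2$ and vanish in a neighborhood of the origin. This is the correct test class because, upon integrating $(u_\theta, L_\theta^* v)$ by parts, the conditions on $v$ kill all boundary contributions on $\partial\bar{\Omega}_2$ while $u_\theta$ vanishes on $\partial\bar{\Omega}_1$. The Cauchy problem for $L_\theta^*$ is posed with data on $\partial\bar{\Omega}_2$ and evolves downward in $\bar{y}$, which is the well-posed direction for this backwards-in-time hyperbolic problem.

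First I would establish a hyperbolic analogue of \lemmaref{l3}. Since $\bar{K}<0$ in the interior of $\bar\Omega$ and the regularization enforces the strict sign $\bar{K}_\theta\le -\theta\bar{h}'^2$ on the lateral boundary, $L_\theta$ is strictly hyperbolic in the $\bar{y}$ direction. The standard hyperbolic multiplier $h^{-2\gamma}\partial_{\bar{y}} v$, paired against $L_\theta^* v$ and integrated over $\bar{\Omega}\cap\{\bar{y}\ge y_0\}$, yields an energy identity whose boundary integral over the slice $\{\bar{y}=y_0\}$ is a positive-definite quadratic form controlling $\int (\partial_{\bar{y}}v)^2 + |\bar{K}_\theta|(\partial_{\bar{x}}v)^2$ weighted appropriately, while the lateral boundary integrals on $\{|\bar{x}|=\bar{h}(\bar{y})\}$ are kept in check by the strict sign of $\bar{K}_\theta$. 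Together with $|\bar{D}|=O(\varepsilon^2)$ and $|w|_{C^4}\le 1$, this gives the base estimate $\|v\|_{(0,1,\gamma)} \le C\|L_\theta^* v\|_{(0,0,\gamma)}$.

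To upgrade to higher order I would apply $\partial_{\bar{x}}^s$ and repeat the multiplier argument on $\partial_{\bar{x}}^s v$. Commutator terms $[\partial_{\bar{x}}^s, L_\theta]v$ split into two classes: those with at most two derivatives on $w$, which by $|w|_{C^4}\le 1$ are $O(\varepsilon)$ and can be absorbed into the principal quadratic form for $\varepsilon$ small; and those with many derivatives on $w$, which the Nash-Moser scheme forbids us from controlling by $\varepsilon$. The latter are handled by inserting a large parameter $\lambda$ into a weighted norm exactly as in (\ref{a14})--(\ref{a17}), so that each commutator term involving $\partial_{\bar{x}}^{s-l+1}v$ with $l\ge 2$ is absorbed into a higher $\lambda$-index term by Cauchy-Schwarz with a small $\delta$. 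A backwards analogue of the ODE in \lemmaref{l2}, with initial data posed on $\partial\bar{\Omega}_2$, then gives the dual estimate $\|v\|_{(-m,-1,\gamma)} \le C\|L_\theta^* v\|_{(-m,-1,\gamma)}$ via the duality argument mimicking (\ref{a18}). The functional $F\colon L_\theta^*(\hat{C}^\infty(\bar{\Omega}))\to\mathbb{R}$ given by $F(L_\theta^* v):=(f,v)$ is then bounded by $C\|f\|_{(m,l,\gamma)}\|L_\theta^* v\|_{(-m,-1,\gamma)}$; Hahn-Banach extends it and Riesz representation produces $u_\theta\in H^{(m,1,\gamma)}(\bar{\Omega})$. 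The hypothesis $f\in H_0^{(m,l,\gamma)}(\bar{\Omega})$ is used precisely to render the pairing $(f,v)$ meaningful for test functions not required to vanish on $\partial\bar{\Omega}_1$.

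The main obstacle I anticipate is controlling the lateral (cusp-side) boundary integrals. Because the cusp is nearly characteristic for $L_\theta$, energy can leak out through $\{|\bar{x}|=\bar{h}(\bar{y})\}$ unless the regularization $-\theta\bar{h}'^2$ is used sharply; this is the structural reason for inserting it in the first place. The choice $\theta=|\Phi(w)|_{C^5}$ is delicate: it must be large enough to restore strict hyperbolicity for the estimate, yet small enough that $\theta\to 0$ along the Nash-Moser iterates so that the regularized solutions converge to a genuine solution of the unmodified equation in later sections.
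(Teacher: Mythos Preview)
Your overall strategy---hyperbolic energy multiplier, commutator absorption via a large parameter, an auxiliary ODE to pass to negative norms, then Hahn--Banach and Riesz---matches the paper's. The essential divergence is the choice of multiplier. You propose the plain weight $h^{-2\gamma}\partial_{\bar y}v$; the paper instead builds the auxiliary function $\zeta$ from $v$ via the $\bar x$--ODE \eqref{a4.5} with weights
\[
b_s=h^{-2(\gamma-s)}\,\bar K_\theta^{-1}\,e^{-\lambda\bar y},
\]
and the effective multiplier against $L_\theta(\partial_{\bar x}^s\zeta)$ is $-b_s\,\partial_{\bar y}\partial_{\bar x}^s\zeta$. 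Two ingredients you omit are doing real work here. First, the factor $\bar K_\theta^{-1}$ makes $b_s\bar K_\theta$ independent of $\bar K_\theta$, so the coefficient of $(\partial_{\bar x}^{s+1}\zeta)^2$ in \eqref{a4.8} is nondegenerate and of size $\lambda$; with your weight the analogous term carries a factor $|\bar K_\theta|$ and degenerates at the lateral boundary, so you would only control $|\bar K_\theta|^{1/2}\partial_{\bar x}v$, not $\partial_{\bar x}v$ itself, and the norm you end up in is weaker than $H^{(m,1,\gamma)}$. Second, the exponential $e^{-\lambda\bar y}$ replaces the $\lambda^{-s}$ hierarchy of the elliptic section; in this hyperbolic setting $\lambda$ enters through $\partial_{\bar y}b_s\sim -\lambda b_s$ and is what allows absorption of the commutator terms and of the contribution from $\bar C$ (via the crucial pointwise bound $|\bar C/\bar K_\theta|\le 1$, which is exactly where the specific choice $\theta=|\Phi(w)|_{C^5}$ is used).

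One further correction: $\theta$ is not a parameter you are free to take ``large enough.'' It is pinned to $|\Phi(w)|_{C^5}$ precisely so that the $\partial_x\Phi(w)$ piece of $\bar C$ is dominated by the $\theta\bar h'^2$ piece of $\bar K_\theta$; this is a matching, not a largeness, condition. Correspondingly the paper's dual estimate \eqref{a4.9} carries an explicit $\theta^{-1}$, which is harmless for existence at fixed $\theta>0$ but signals that your $\theta$--independent estimate $\|v\|_{(-m,-1,\gamma)}\le C\|L_\theta^*v\|_{(-m,-1,\gamma)}$ is too optimistic as stated.
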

\begin{proof}

The collection of $v$ as stated in the theorem will be denoted by $\widehat{C}^{\infty}(\bar{\Omega})$ hereafter.
Set
$$
b_s(\bar{x},\bar{y})=h^{-2(\gamma-s)}\bar{K}_{\theta}^{-1}(\bar{x},\bar{y})e^{-\lambda \bar{y}},
$$
and let $\zeta$ be the unique solution of
\begin{eqnarray} \label{a4.5}
&& \sum_{s=0}^m (-1)^{s+1} \partial_{\bar{x}}^s ( b_s \partial_{\bar{x}}^s \zeta_{\bar{y}} ) = v \ \text{ in } \bar{\Omega}, \nonumber \\
&& \zeta |_{\partial \bar{\Omega}_1} = \partial_{\bar{x}}^s \zeta_{\bar{y}} | _{\partial \bar{\Omega}_1} = 0, \ \ \ \ 0 \le s \le m-1,
\end{eqnarray}
with $v \in \widehat{C}^{\infty}(\bar{\Omega})$. The existence and the uniqueness is guaranteed by ODE theory. We then repeatedly differentiate $\partial_{\bar{x}}^s \zeta$ and $\partial_{\bar{x}}^s \zeta_{\bar{y}}$ along $\partial \bar{\Omega}_1$ to obtain,
\begin{equation} \label{a4.6}
\partial_{\bar{x}}^s \partial_{\bar{y}}^t \zeta | _{\partial \bar{\Omega}_1} = 0, \ \ \ \ \ \ s+t \le m \ \ \text{and} \ \ 0 \le t \le 2.
\end{equation}
We then establish an \textit{a priori} estimate
\begin{equation} \label{a4.7}
\left( L_{\theta} \zeta , \sum_{s=0}^m (-1)^{s+1} \partial_{\bar{x}}^s(b_s \partial_{\bar{x}}^s \zeta_{\bar{y}}) \right) \ge C \| \zeta \|^2 _{(m,1,\gamma,\bar{\Omega})}.
\end{equation}
Integrating the left hand side by parts yields,
\begin{eqnarray} \label{a4.8}
\lefteqn{\left( L_{\theta} \zeta , \sum_{s=0}^m (-1)^{s+1} \partial_{\bar{x}}^s(b_s \partial_{\bar{x}}^s \zeta_{\bar{y}}) \right)} \\
&\ge& \sum_{s=0}^m \int_{\bar{\Omega}} \frac{1}{4} (\partial_{\bar{y}} b_s - 2 b_s \bar{D})(\partial_{\bar{x}}^s \partial_{\bar{y}} \zeta)^2 \nonumber \\
&+& \sum_{s=0}^m \int_{\bar{\Omega}} \left[ - \frac{1}{2} \partial_{\bar{y}} (b_s \bar{K}_{\theta}) - 4 \frac{(\partial_{\bar{x}} b_s \bar{K}_{\theta})^2 + (b_s \bar{C})^2}{(\partial_{\bar{y}} b_s - 2 b_s \bar{D})} \right] (\partial_{\bar{x}}^{s+1} \zeta)^2 \nonumber \\
&+& \sum_{s=1}^m \sum_{l=1}^s C_m \int_{\bar{\Omega}} b_s (\partial_{\bar{x}}^s \partial_{\bar{y}} \zeta) \partial_{\bar{x}} (\partial_{\bar{x}}^l \bar{K}_{\theta} \partial_{\bar{x}}^{s+1-l} \zeta ) \nonumber \\
&+& \sum_{s=1}^m \sum_{l=1}^s C_m \int_{\bar{\Omega}} b_s (\partial_{\bar{x}}^s \partial_{\bar{y}} \zeta) (\partial_{\bar{x}}^l \bar{C} \partial_{\bar{x}}^{s+1-l} \zeta) \nonumber \\
&+& \sum_{s=1}^m \sum_{l=1}^s C_m \int_{\bar{\Omega}} b_s (\partial_{\bar{x}}^s \partial_{\bar{y}} \zeta)(\partial_{\bar{x}}^l \bar{D} \partial_{\bar{x}}^{s-1} \partial_{\bar{y}} \zeta) \nonumber \\
&+& \frac{1}{2} \int_{\partial \bar{\Omega}} b_m \bar{K}_{\theta} (\partial_{\bar{x}}^{m+1} \zeta)^2 \nu_2 +2 b_m \partial_{\bar{x}}^{m-1}\partial_{\bar{y}}^2 \zeta \partial_{\bar{x}}^m \partial_{\bar{y}} \zeta \nu_1 - b_m (\partial_{\bar{x}}^m \partial_{\bar{y}} \zeta)^2 \nu_2. \nonumber
\end{eqnarray}
The boundary integral has the correct sign by the usual non-characteristic argument. For the positivity of the first term on the right hand side of the equation (\ref{a4.8}), we compute
\begin{eqnarray}
\lefteqn{\partial_{\bar{y}} b_s - 2b_s \bar{D}} \nonumber \\
&=& -b_s \left( \frac{\lambda}{2}  +2 \bar{D} - \frac{2 \theta \bar{h}' \bar{h}''}{\bar{K}_{\theta}} + 2(\gamma-s) \frac{h'}{h} + \frac{\partial_{\bar{y}} \bar{K}}{\bar{K}_{\theta}} \right) \\
&\ge& -b_s,
\end{eqnarray}
by condition (\ref{a0}) to (\ref{cusp parameter}), and our choice of coordinate.
For the second term on the right hand side of the equation (\ref{a4.8}), consider
\begin{eqnarray*}
\lefteqn{-\frac{1}{4} \partial_{\bar{y}}(b_s \bar{K}_{\theta})(\partial_{\bar{y}} b_s -2 b_s \bar{D}) - 4 (\partial_{\bar{x}} b_s \bar{K}_{\theta})^2 - 4(b_s \bar{C})^2}\\
&\ge& (\bar{K}_{\theta} b_s)^2 \left[ \frac{-\lambda^2 }{\bar{K}_{\theta}}  -C(\gamma) \lambda  \frac{\partial_{\bar{y}} \bar{K}}{\bar{K}^2_{\theta}} -4 \left( \frac{\partial_{\bar{x}} \bar{K}}{\bar{K}_{\theta}} \right) ^2 - 4 \left( \frac{\bar{C}}{\bar{K}_{\theta}} \right) ^2 \right].
\end{eqnarray*}
Because $-\lambda \partial_{\bar{y}} \bar{K}- O((\partial_{\bar{x}} \bar{K})^2) \ge 0$,
the term
$$
-C( \gamma) \lambda  \frac{\partial_{\bar{y}} \bar{K}}{\bar{K}^2_{\theta}}-4 \left( \frac{\partial_{\bar{x}} \bar{K}}{\bar{K}_{\theta}} \right) ^2 \ge 0.
$$
Therefore this part has the correct sign in the inequality, we only need to take care of the other part.

Recall that $\bar{C} = O(K+\partial_x \Phi(w))$, so
$$
\left| \frac{\bar{C}}{\bar{K}_{\theta}} \right| \le \frac{-K+|\partial_x \Phi(w)|}{-\bar{K}+\theta (\bar{h}')^2} \le \frac{-K+h^2|h^{-2} \partial_x \Phi(w)|}{-\bar{K}+\theta (\bar{h}')^2}\le \frac{-K+ \theta h^2}{-\bar{K}+\theta (\bar{h}')^2} \le 1,
$$
where we have used the fact that $\Phi(w)$ vanishes to high order at the origin. Therefore, the part
${-\lambda^2 }/{\bar{K}_{\theta}}-( {\bar{C}}/{\bar{K}_{\theta}} )^2$ is greater than a positive constant. Here we emphasize the fact that only up to the second derivatives of $h$ and $\bar{h}$ are involved so far. Moreover, the computation above requires $\lambda$ to be large but not dependent on $\varepsilon$ or $\theta$. This is important because this computation will be used in the energy estimate.

For the last three interior integals in (\ref{a4.8}), consider the following term first,
\begin{eqnarray*}
&&\Bigg| \int_{\bar{\Omega}} b_s (\partial_{\bar{x}}^s \partial_{\bar{y}} \zeta) \partial_{\bar{x}} \bar{K} \partial_{\bar{x}}^{s+1} \zeta\ \Bigg| \\
&\le& \int_{\bar{\Omega}} h^{-2\gamma+2s} \frac{1}{-\bar{K}_{\theta}} \left[ (\partial_{\bar{x}}^s \partial_{\bar{y}}  \zeta)^2 + (\partial_{\bar{x}} \bar{K} \partial_{\bar{x}}^{s+1} \zeta)^2 \right]\\
&\le& \int_{\bar{\Omega}} h^{-2\gamma+2s} \frac{1}{-\bar{K}_{\theta}} (\partial_{\bar{x}}^s \partial_{\bar{y}}  \zeta)^2 +h^{-2\gamma+2s} \frac{1}{\theta} \Big(\frac{\partial_{\bar{x}} \bar{K}}{\bar{h}'}\Big)^2 (\partial_{\bar{x}}^{s+1} \zeta)^2.
\end{eqnarray*}
Absorb the extra $\bar{h}'^{-2}$ in front of $\partial_{\bar{x}}^{s+1} \zeta$ as follows,
\begin{eqnarray}\label{a4.8.5}
\Big| \frac{\partial_{\bar{x}} \bar{K}}{\bar{h}'} \Big| \le \frac{|\partial_{\bar{x}} \bar{K}(\bar{x},\bar{y}) - \partial_{\bar{x}} \bar{K}(h(\bar{y}),\bar{y})|}{h} \le \frac{\int_{\bar{h}(\bar{y})}^{\bar{x}} |\partial_{\bar{x}}^2 \bar{K}|}{h} \le |\bar{K}|_{C^2}.
\end{eqnarray}
Similarly, there are also such extra $\bar{h}'^{-2}$ in the rest of the integrals. However, in those integrands, the derivatives of $\zeta$ with respect to $\bar{x}$ have order strictly smaller than $s+1$, so the extra $\bar{h}'^{-2}$ can be absorbed to the weighted functions. With $\lambda=\lambda(\varepsilon,\theta)$ sufficiently large, all these integrals would be dominated by the first two integrals of the equation (\ref{a4.8}).  Use the proof in \cororef{c3.1}, we can estimate $\|\xi\|_{H^{0,\gamma}(\bar{\Omega})} $ and arrive at (\ref{a4.7}).

Then we compute
\begin{eqnarray*}
\| v \|_{(-m,0,\gamma,\bar{\Omega})} &:=& \sup_{\eta \in H_0^{(m,0,\gamma)}(\bar{\Omega})} \frac{|(\eta,v)|}{\| \eta \|_{(m,0,\gamma,\bar{\Omega})}} \\
&=& \sup_{\eta \in H_0^{(m,0,\gamma)}(\bar{\Omega})} \frac{|(\eta, \sum_{s=0}^m (-1)^{s+1} \partial_{\bar{x}}^s(b_s \partial_{\bar{x}}^s \partial_{\bar{y}} \zeta))}{\| \eta \|_{(m,0,\gamma,\bar{\Omega})}} \nonumber \\
&\le& \theta^{-1} C \| \zeta \|_{(m,1,\gamma,\bar{\Omega})}, \nonumber
\end{eqnarray*}
and
\begin{eqnarray*}
\| \zeta \|_{(m,1,\gamma,\bar{\Omega})} \|L^*_{\theta} v \|_{(-m,-1,\gamma,\bar{\Omega})} &\ge& (\zeta, L^*_{\theta} v ) = (L_{\theta}\zeta, v ) \\
&=& \bigg( L_{\theta} \zeta, \sum_{s=0}^m (-1)^{s+1} \partial_{\bar{x}}^s (b_s \partial_{\bar{x}}^s \partial_{\bar{y}} \zeta)  \bigg) \ge C \| \zeta \|^2_{(m,1,\gamma,\bar{\Omega})}.
\end{eqnarray*}
Together we have,
\begin{equation} \label{a4.9}
\|v\|_{(-m,0,\gamma,\bar{\Omega})} \le \theta^{-1}C \| L^*_{\theta}v \|_{(-m,-1,\gamma,\bar{\Omega})}, \ \ \ \ \ \ \text{ for all } \ \ v\in \widehat{C}^{\infty}(\bar{\Omega}).
\end{equation}
Define a linear functional $F:X \rightarrow \mathbb{R}$, where $X=L^*_{\theta}\widehat{C}^{\infty}(\bar{\Omega})$, by
$$
F(L^*_{\theta}v)=(f,v).
$$
Then by (\ref{a4.9}) we deduce that $F$ is bounded on $X$ as a subspace of $H^{(-m,-1,\gamma)}_0(\bar{\Omega})$ since
$$
| F(L^*_{\theta}v) | \le \| f \| _{(m,0,\gamma,\bar{\Omega})} \|v \|_{(-m,0,\gamma,\bar{\Omega})} \le \theta^{-1}C \|f\|_{(m,0,\gamma,\bar{\Omega})} \|L^*_{\theta}v \|_{(-m,-1,\gamma,\bar{\Omega})}.
$$
Lastly, we may apply Hahn-Banach theorem to obtain a bounded extension of $F$ defined on $H^{(-m,-1,\gamma)}_0(\bar{\Omega})$. Since $H^{(m,1,\gamma)}_0(\bar{\Omega})$ is a Hilbert space, there exists a unique $u_{\theta} \in H^{(m,1,\gamma)}_0(\bar{\Omega})$ such that
$$
F(\xi)=(u_{\theta},\xi), \ \ \ \ \text{ for all } \ \xi \in H^{(-m,-1,\gamma)}_0(\bar{\Omega}).
$$
By restricting $\xi$ back to $X$, we see that $u_{\theta}$ solves
$$
(u_{\theta}, L_{\theta}^*v)=(f,v), \ \ \ \ \text{ for all } \ v \in \widehat{C}^{\infty}(\bar{\Omega}).
$$
\end{proof}
We define $\| \cdot \|_{(m,\gamma,\bar{\Omega})}=\| \cdot \|_{(m,m,\gamma,\bar{\Omega})}$ and $H_0^{(m,\gamma)}(\bar{\Omega})=H_0^{(m,m,\gamma)}(\bar{\Omega})$, then derive the following corollary.
\begin{coro} \label{c4.3}
Under the hypothesis of \theoremref{t4.2}, if $f\in H^{(m,\gamma)}_0(\bar{\Omega})$ then there exists a unique solution $u_{\theta} \in H^{(m,\gamma)}_0(\bar{\Omega})$ of (\ref{a4.3}) with $\phi, \psi =0$, for each $\theta \ge 0$.
\end{coro}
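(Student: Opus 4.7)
The plan is to derive the corollary from Theorem~\ref{t4.2} by a bootstrap argument in the $\bar{y}$-regularity. Since $H_0^{(m,\gamma)}(\bar{\Omega}) = H_0^{(m,m,\gamma)}(\bar{\Omega})$ embeds into $H_0^{(m,1,\gamma)}(\bar{\Omega})$, the hypothesis $f \in H_0^{(m,\gamma)}(\bar{\Omega})$ falls within the scope of Theorem~\ref{t4.2} and produces a weak solution $u_\theta \in H^{(m,1,\gamma)}(\bar{\Omega})$; the Hahn--Banach/Riesz construction in that proof places $u_\theta$ in $H_0^{(m,1,\gamma)}(\bar{\Omega})$. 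The remaining task is to promote the single permitted $\bar{y}$-derivative to the full range $0 \le t \le m$.

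Since the coefficient of $\partial_{\bar{y}}^2$ in $L_\theta$ equals $1$, the weak equation $L_\theta u_\theta = f$ yields the pointwise a.e. identity
\begin{equation*}
\partial_{\bar{y}}^2 u_\theta \;=\; f \;-\; \partial_{\bar{x}}(\bar{K}_\theta \partial_{\bar{x}} u_\theta) \;-\; \bar{C}\,\partial_{\bar{x}} u_\theta \;-\; \bar{D}\,\partial_{\bar{y}} u_\theta,
\end{equation*}
which lets us trade each pair of $\bar{y}$-derivatives for two $\bar{x}$-derivatives. I would induct on $t$: the base cases $t = 0, 1$ come from $u_\theta \in H^{(m,1,\gamma)}$. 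For $t \ge 2$ and $s + t \le m$, apply $\partial_{\bar{x}}^s \partial_{\bar{y}}^{t-2}$ to the identity above; the Leibniz expansion produces derivatives of $f$ of total order $\le m$ (controlled by $\|f\|_{(m,\gamma,\bar{\Omega})}$) together with products of coefficient derivatives with $\partial_{\bar{x}}^{s'} \partial_{\bar{y}}^{t'} u_\theta$ where $t' \le t-1$ and $s' + t' \le m$, all bounded by the inductive hypothesis. The $H_0$ boundary behavior at $\partial \bar{\Omega}_1$ is preserved because both $f$ and $u_\theta$ vanish there to the required order, so each bootstrapped derivative inherits this vanishing.

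For uniqueness, a standard hyperbolic energy estimate for the regularized operator $L_\theta$ (obtained by testing $L_\theta u = 0$ against $e^{-\lambda \bar{y}}\partial_{\bar{y}} u$ with $\lambda$ large and integrating by parts as in Theorem~\ref{t4.2}, exploiting the strict hyperbolicity $-\bar{K}_\theta \ge \theta \bar{h}'^2$) forces any $u \in H_0^{(m,\gamma)}(\bar{\Omega})$ with $L_\theta u = 0$ and vanishing Cauchy data to be identically zero. The principal obstacle is the weight bookkeeping in the bootstrap step: the Leibniz expansion generates terms with high-order derivatives of $\bar{K}_\theta$ which themselves depend on derivatives of $w$ and of $\bar{h}$, and one must verify—using $|w|_{C^4} \le 1$, the growth condition \eqref{a0.5}, and the degenerate factor carried by $\bar{K}_\theta$—that each such term fits into the weighted norm $\|\cdot\|_{(m,m,\gamma,\bar{\Omega})}$ without introducing $w$-dependent constants in the leading part or upsetting the balance of powers of $h$.
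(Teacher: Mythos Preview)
Your plan is essentially the paper's own: obtain $u_\theta \in H_0^{(m,1,\gamma)}(\bar{\Omega})$ from \theoremref{t4.2}, then bootstrap $\bar{y}$-regularity via the identity $\partial_{\bar{y}}^2 u_\theta = f - \partial_{\bar{x}}(\bar{K}_\theta \partial_{\bar{x}} u_\theta) - \bar{C}\partial_{\bar{x}} u_\theta - \bar{D}\partial_{\bar{y}} u_\theta$. The only structural difference is that the paper first passes through the \emph{unweighted} space, citing \cite{MR2765727} to verify that the weak solution is a genuine solution (boundary conditions hold, uniqueness, and $u_\theta \in H^m(\bar{\Omega})$), and only afterward bootstraps back into the weighted space $H_0^{(m,\gamma)}(\bar{\Omega})$; you attempt the bootstrap directly in the weighted scale. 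Either route works, and your weight bookkeeping concern is real but benign: the leading term $\bar{K}_\theta\,\partial_{\bar{x}}^{s+2}\partial_{\bar{y}}^{t-2}u_\theta$ carries exactly the right power $h^{-\gamma+s+t}$, and the lower-order Leibniz terms only improve the weight.

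One point you pass over and the paper handles by reference: going from the duality solution of \theoremref{t4.2} to a pointwise solution with $u_\theta|_{\partial\bar{\Omega}_1}=\partial_{\bar{y}}u_\theta|_{\partial\bar{\Omega}_1}=0$ (so that the bootstrap identity is actually available) requires an argument---integrating the weak formulation by parts and using membership in $H_0^{(m,1,\gamma)}$ to kill boundary terms. The paper outsources this, together with uniqueness, to \cite{MR2765727}; your energy-estimate sketch for uniqueness is correct in spirit and would suffice once the boundary conditions are in hand. Also note that the statement includes $\theta=0$: \theoremref{t4.2} itself uses $\theta>0$ (the bound \eqref{a4.9} carries $\theta^{-1}$), so the degenerate case is obtained by a limit using the $\theta$-independent a~priori estimate derived later; this is implicit in both proofs.
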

\begin{proof}
From \theoremref{t4.2}, the weak solution $u_{\theta} \in H_0^{(m,1,\gamma)}(\bar{\Omega})$ is also in $H_0^{(m,1)}(\bar{\Omega})$ as in \cite{MR2765727}. Therefore we may apply their method to deduce  the boundary condition, uniqueness and the fact that $u_{\theta} \in H^m(\bar{\Omega})$. Then with usual boot-strap procedure we conclude that $u_{\theta} \in H_0^{(m,\gamma)}(\bar{\Omega})$.
\end{proof}
We define $\overline{C}^{\infty}(\bar{\Omega})$ be the collection of $C^{\infty}(\bar{\Omega})$ functions vanishing in a neighborhood of the origin and $\overline{H}^{(m,\gamma)}_0(\bar{\Omega})$ to be the completion of $\overline{C}^{\infty}(\bar{\Omega})$ with respect to the norm $\| \cdot \|_{(m,\gamma,\bar{\Omega})}$.

Here we need to pause and introduce a notation, which is motivated by the following estimate with the Nirenberg inequality. Let $\tau$ and $\sigma$ be multi-indices such that $| \tau| =t$ and $|\sigma| =s $, consider
\begin{eqnarray*}
&&\| \partial^{\tau} (\partial^2_{x} \bar{x}) \partial^{\sigma} u \|_{L^2}  \\
&\le& C(\|  (\partial^2_{x} \bar{x})\|_{L^{\infty}} \|u\|_{H^{t+s}} +\|  (\partial^2_{x} \bar{x})\|_{H^{t+s}} \|u\|_{L^{\infty}} )      \\
&\le& C(\|u\|_{H^{t+s}}+\|w\|_{H^{t+s+6}} \|u\|_{L^{\infty}}).
\end{eqnarray*}
We merely used the fact that $\|\partial^2_{x} \bar{x}\|_{H^m} < C (1+ |w|_{H^{m+6}})$ here. This property is actually shared by several functions, such as $a^{ij}$ and $ \partial_xK$ or the coordinate $\partial_x \partial_{\bar{x}} x$. If we replace $ \partial^2_{x} \bar{x} $ by them in the estimate above, the same result can be obtained. Therefore, we intend to use an abstract notation $\Lambda_l$ to replace this family of functions. We agree that $\Lambda_l$ represents a $ C^{m_0-4-l} $ function whose $ H^m $ norm can be estimate by $C_{l,m} (1+\|w\|_{H^{l+m+6}})$. The following are the functions to be replaced by $\Lambda_l$ when appropriate:
\begin{enumerate}
\item An \textit{a priori} estimate for the first order PDE (see \cite{MR2779549}) yields ${\partial} ^{2+l} \bar{x}=\Lambda_l$.
\item By the estimate for the first order PDE, the assumption that $|w|_{C^4} \le 1$ and the Nirenberg inequality (\lemmaref{l3.3}) we have $ \partial^{1+l}(\partial_{\bar{x}}x) = \Lambda_l$.
\item Similarly we have $a^{ij} = \Lambda_0$ and $\bar{K}, \bar{C}, \bar{D}$ defined in (\ref{a4.1}) are $ \Lambda_{-1}, \Lambda_0, \Lambda_{-1}$ respectively.
\item By $|w|_{C^4} \le 1$ and the Nirenberg iequality, we do have $\Lambda_t \cdot \Lambda_s = \Lambda_{t+s}$ and $\partial^t \Lambda_s = \Lambda_{t+s}$ for $t, s \ge 0$. Therefore, an arbitrary product, sum or derivatives of $\Lambda$ function will still be a $\Lambda$ function, except the subscript needs to be modified accordingly.
\end{enumerate}

The next lemma helps us to obtain the existence with more general $f$ and boundary data.
\begin{lemma} \label{l4.4}
Suppose $g\in C^{m_0}$, $\phi \in \overline{C}^{\infty}(\partial \bar{\Omega})$, $\psi \in \overline{C}^{\infty}(\partial \bar{\Omega})$, and $f \in \overline{C}^{\infty}( \bar{\Omega})$ with $m \le m_0-4$. Then there exists a function $\eta_{\theta} \in \overline{H}^{(m,\gamma)}_0(\bar{\Omega})$ such that
\begin{eqnarray} \label{a4.10}
&&\eta_{\theta}|_{\partial \bar{\Omega}_1}=\phi, \ \ \ \ \ \ \partial_{\bar{y}} \eta_{\theta}|_{\partial \bar{\Omega}_1}=\psi, \nonumber \\
&&\partial_{\bar{x}}^t (f-L_{\theta} \eta_{\theta})|_{\partial \bar{\Omega}_1}=0, \ \ \ \ \text{ for } \ 0 \le t \le m-3,
\end{eqnarray}
with
\begin{eqnarray} \label{a4.10.5}
\|\eta_{\theta}\|_{H^{(m,\gamma)}(\bar{\Omega})} &\le& C_{m,\gamma} \Bigg[\sum_{|\tau+\sigma| = 0}^{m-1}\|h^{-\gamma+|\tau+\sigma|}(\frac{\bar{y}}{h})^{1+(m-|\tau+\sigma|)} \Lambda_{|\tau|} \bar{\partial}^{\sigma}f \|_{L^2(\bar{\Omega})} \nonumber \\
&+& \sum_{|\tau+\sigma|=0}^{m}\|h^{-\gamma+|\tau+\sigma|}(\frac{\bar{y}}{h})^{5+(m-|\tau+\sigma|)} \Lambda_{|\tau|} \bar{\partial}^{\sigma}\phi \|_{L^2(\partial \bar{\Omega})} \nonumber \\
&+& \sum_{|\tau+\sigma|=0}^{m-1}\|h^{-\gamma+|\tau+\sigma|}(\frac{\bar{y}}{h})^{5+(m-|\tau+\sigma|)} \Lambda_{|\tau|} \bar{\partial}^{\sigma}\psi \|_{L^2(\partial \bar{\Omega})} \Bigg],
\end{eqnarray}
where $C_{m,\gamma}$ is a constant depending only on $m$ and $\gamma$, $\bar{\partial}$ is derivative with respect to $(\bar{x}, \bar{y})$ and $\tau, \sigma$ are multi-indices.
\end{lemma}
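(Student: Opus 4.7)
The plan is to build $\eta_\theta$ as a finite Taylor polynomial in $\bar y$ whose coefficients are determined recursively by $\phi$, $\psi$, $f$, and the equation, multiplied by a suitable cutoff so that the resulting function lies in $H^{(m,\gamma)}(\bar\Omega)$. First I would set $\eta_0=\phi$, $\eta_1=\psi$ (viewed as functions of $\bar x$ on $\partial\bar\Omega_1$, extended arbitrarily into $\bar\Omega$), and for each $2\le k\le m-1$ define $\eta_k(\bar x)$ by solving for $\partial_{\bar y}^k u|_{\partial\bar\Omega_1}$ in the relation $\partial_{\bar y}^{k-2}(L_\theta u-f)|_{\partial\bar\Omega_1}=0$. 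This is possible because the coefficient of $\partial_{\bar y}^2$ in $L_\theta$ is $1$, so at each step $\eta_k$ is a polynomial expression in $\phi,\psi,f$ and their derivatives up to order $k$, with coefficients built from $\bar K_\theta,\bar C,\bar D$ and their $\bar y$-derivatives restricted to $\partial\bar\Omega_1$.

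With these coefficients in hand, I would take
$$\eta_\theta(\bar x,\bar y)\;=\;\chi\!\left(\frac{\bar y}{h}\right)\sum_{k=0}^{m-1}\frac{\bar y^k}{k!}\,\eta_k(\bar x),$$
where $\chi\in C^\infty_c([0,\infty))$ equals $1$ in a neighborhood of $0$. By construction $\eta_\theta|_{\partial\bar\Omega_1}=\phi$ and $\partial_{\bar y}\eta_\theta|_{\partial\bar\Omega_1}=\psi$. Moreover the identity $L_\theta\eta_\theta|_{\partial\bar\Omega_1}=f|_{\partial\bar\Omega_1}$ holds as functions of $\bar x$ by the choice of $\eta_2$, and applying $\partial_{\bar x}^t$ preserves this equality, giving the residual condition $\partial_{\bar x}^t(f-L_\theta\eta_\theta)|_{\partial\bar\Omega_1}=0$ for $0\le t\le m-3$; the upper bound $m-3$ is the price of the fact that each $\eta_k$ is built from derivatives of the data of order at most $k$.

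To derive (\ref{a4.10.5}) I would apply the Leibniz rule to $\bar\partial^{\tau+\sigma}\eta_\theta$, distributing the derivatives among $\chi(\bar y/h)$, the Taylor monomials $\bar y^k$, and the coefficients $\eta_k(\bar x)$. Each derivative falling on $\chi$ costs one factor of $h^{-1}$ (modulated by derivatives of $h$, which are absorbed into bounded quantities via (\ref{a0.5})); each remaining $\bar y^k$ paired against the natural weight $h^{-\gamma+|\tau+\sigma|}$ in the norm produces a power of $\bar y/h$; and the differentiated coefficients arising from the recursion contribute $\Lambda_{|\tau|}$-factors. The Nirenberg inequality in \lemmaref{l3.3} then splits each product of data with a $\Lambda$-coefficient into the $H^m$-$L^\infty$ pairings recorded in (\ref{a4.10.5}). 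The gap between the exponent $1$ on the $f$-term and $5$ on the $\phi,\psi$-terms reflects the extra four orders of differentiation that $\phi,\psi$ accumulate through the recursion for $\eta_k$ (each time they appear inside an application of $L_\theta$), whereas $f$ enters directly.

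The main obstacle will be the precise bookkeeping of powers of $\bar y/h$ through this Leibniz expansion, since $\bar y$-derivatives simultaneously hit the cutoff $\chi(\bar y/h)$ (generating $h^{-1}$ modified by derivatives of $\bar y/h$) and the Taylor factors. The uniform bound $|h^{(l)}h^{l-1}|\le C_l$ from (\ref{a0.5}) is what makes this tractable: every derivative of $\chi(\bar y/h)$ costs effectively one power of $h^{-1}$, so all remaining factors recombine cleanly into the weights $(\bar y/h)^{1+(m-|\tau+\sigma|)}$ and $(\bar y/h)^{5+(m-|\tau+\sigma|)}$ appearing in (\ref{a4.10.5}).
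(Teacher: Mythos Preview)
Your construction misidentifies the boundary. In the hyperbolic cusp setting of Section~5 the domain is
\[
\bar\Omega=\{(\bar x,\bar y):\ |\bar x|\le \bar h(\bar y),\ 0\le\bar y\le 1\},
\]
and the ``bottom'' boundary $\partial\bar\Omega_1$ is the pair of cusp curves $\{\bar x=\pm\bar h(\bar y)\}$, not the line $\{\bar y=0\}$ (which meets $\bar\Omega$ only at the origin). Thus your Taylor polynomial in $\bar y$ with coefficients $\eta_k(\bar x)$ does \emph{not} satisfy $\eta_\theta|_{\partial\bar\Omega_1}=\phi$: that identity would hold on $\{\bar y=0\}$, which is a single point. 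Likewise, your recursion solves for $\partial_{\bar y}^k u$ on a $\bar y=\text{const}$ surface, whereas what is needed is the transverse jet along the cusp curves, consistent with the residual condition $\partial_{\bar x}^t(f-L_\theta\eta_\theta)|_{\partial\bar\Omega_1}=0$. The cutoff $\chi(\bar y/h)$ is also ill-adapted: since $h=h(\bar y)$ here, $\bar y/h(\bar y)$ is a function of $\bar y$ alone and does not localize near $\partial\bar\Omega_1$.

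The underlying idea---use the non-characteristic condition to compute the full boundary jet of the solution from $\phi,\psi,f$, then produce an extension with that jet---is correct and is exactly what the paper does. The paper's implementation, however, first straightens the geometry: it passes to cylinder coordinates $(\tilde x,\tilde y)=(\bar x/\bar h(\bar y),\,\int_{\bar y}^1 1/\bar h)$, so that $\partial\bar\Omega_1$ becomes $\{\tilde x=\pm1\}$, and then invokes the bounded right inverse of the trace map $H^m(\tilde\Omega)\to H^m(\partial\tilde\Omega)$ on the Lipschitz cylinder (via surjectivity and the closed graph theorem) to obtain $\bar\eta_\theta$ with the prescribed jet; finally $\eta_\theta=h^\gamma\bar\eta_\theta$. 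The weighted estimate \eqref{a4.10.5} then drops out of the coordinate change. An explicit Taylor-plus-cutoff construction \emph{in the cylinder variable $\tilde x$} (transverse to the straightened boundary) would be a legitimate alternative to the abstract right inverse, but your expansion in $\bar y$ cannot be repaired without first correcting the direction.
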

\begin{proof}
For any $u_{\theta}$ satisfying (\ref{a4.10}), since $\partial \bar{\Omega}_1$ is non-characteristic, $u_{\theta}$ can be expressed in terms of $f$, $\phi$ and $\psi$ along the boundary. Then the estimate below follows,

\begin{eqnarray} \label{a4.11}
\|u_{\theta}\|_{H^{(m,\gamma)}(\partial \bar{\Omega})} &\le& C_{_m,\gamma} \Bigg[\sum_{|\tau+\sigma|=0}^{m-2}\|h^{-\gamma+|\tau+\sigma|}(\frac{\bar{y}}{h})^{1+(m-|\tau+\sigma|)}  \Lambda_{|\tau|} \bar{\partial}^{\sigma}f \|_{L^2(\bar{\partial}\bar{\Omega})} \nonumber \\
&+& \sum_{|\tau+\sigma|=0}^{m}\|h^{-\gamma+|\tau+\sigma|}(\frac{\bar{y}}{h})^{5+(m-|\tau+\sigma|)}  \Lambda_{|\tau|} \bar{\partial}^{\sigma}\phi \|_{L^2(\partial \bar{\Omega})} \nonumber \\
&+& \sum_{|\tau+\sigma|=0}^{m-1}\|h^{-\gamma+|\tau+\sigma|}(\frac{\bar{y}}{h})^{5+(m-|\tau+\sigma|)} \Lambda_{|\tau|} \bar{\partial}^{\sigma}\psi \|_{L^2(\partial \bar{\Omega})} \Bigg].
\end{eqnarray}
Then consider coordinate change
$$
\tilde{x}=\frac{\bar{x}}{\bar{h}(\bar{y})}, \ \ \ \ \ \  \tilde{y}=\int_{\bar{y}}^1 \frac{1}{\bar{h}},
$$
which maps $\bar{\Omega}$ into an infinite half cylinder $\tilde{\Omega}= \{ \ (\tilde{x}, \tilde{y}) | \  |\tilde{x}| \le 1, \ \ 1 \le \tilde{y} \ \}.$
Since $\tilde{\Omega}$ is a Lipschitz domain, we have surjectivity from $H^m(\tilde{\Omega})$ to $H^m(\partial \tilde{\Omega})$. Then by quotienting the kernel and applying the closed graph theorem, we obtain a $\bar{\eta}_{\theta}$ such that

$$
\tilde{\partial}^{\tau} \bar{\eta}_{\theta} = \tilde{\partial}^{\tau}(h^{-\gamma} u_{\theta}) \ \text{ on } \ \partial \tilde{\Omega}_1 \ \ \ \ \text{ for } \ |\tau| \le m-3.
$$

We define $\eta_{\theta}=h^{\gamma} \bar{\eta}_{\theta}$,
and then the estimate follows from
$$
\| h^{-\gamma}\eta_{\theta} \| _{H^m(\tilde{\Omega})}=\| \bar{\eta}_{\theta} \| _{H^m(\tilde{\Omega})}
\le C_{m} \sum_{|\tau| \le m} \| \tilde{\partial}^{\tau}(h^{-\gamma} u_{\theta}) \|_{L^2( \partial \tilde{\Omega}_1)},
$$
and changing the coordinate back to $(\bar{x},\bar{y})$.
\end{proof}

Here we remark in \lemmaref{l4.4} we have to estimate the higher derivatives of $\bar{h}$. Observe the following equation along the boundary
\begin{equation} \label{est_shape}
\bar{h} ( \bar{y} ) = \bar{x} (h(y) ,y).
\end{equation}
Repeatedly differentiating this equation shows that the derivatives of $\bar{h}$ can be estimated by the power of $h$ and the derivatives of the coordinate functions $(\bar{x}, \bar{y})$.

Here we revisit the "loss of derivatives" issue we mentioned in the introduction. A well behaved second order differential operator is expected to be an isomorphism $L:H^{m+2} \rightarrow H^{m}$, which means for an arbitrary data $f \in H^{m}$ we can find the unique $H^{m+2}$ solution for it. In the case we only have surjectivity $L:H^{m+2} \rightarrow H^{m+N}$, we say it loses $N$ derivatives and Nash-Moser iteration is often used to defeat the loss. The ${\bar{y}}/{h}$ part in the estimate above is the main source of "loss of weights" and we will apply \lemmaref{c3.1} to absorb them into the loss of derivatives. However, we see from the estimate that the loss of weights depends on $m$. This causes difficulty because Nash-Moser iteration requires an estimate
$$
\|u\|_{H^m} \le C(\|f\|_{H^{am+b}}),
$$
with $a < 2$ (see \cite{MR546504, MR656198}, where the authors provide a counter example to the inverse function theorem when $a=2$).

This forces us to induce a strong condition (\ref{cusp parameter h}) so we can rewrite the estimate (\ref{a4.10.5}) as
\begin{eqnarray}\label{a4.13}
\|\eta_{\theta}\|_{H^{(m,\gamma)}(\bar{\Omega})} &\le& C_{m,\gamma} \Bigg[\sum_{|\tau+\sigma| = 0}^{m-1}\|h^{-\gamma-\alpha m+|\tau+\sigma|} \Lambda_{|\tau|} \bar{\partial}^{\sigma}f \|_{L^2(\bar{\Omega})} \nonumber \\
&+& \sum_{|\tau+\sigma|=0}^{m}\|h^{-\gamma- \alpha m-3 +|\tau+\sigma|} \Lambda_{|\tau|} \bar{\partial}^{\sigma}\phi \|_{L^2(\partial \bar{\Omega})} \nonumber \\
&+& \sum_{|\tau+\sigma|=0}^{m-1}\|h^{-\gamma- \alpha m -3 + |\tau+\sigma|} \Lambda_{|\tau|} \bar{\partial}^{\sigma}\psi \|_{L^2(\partial \bar{\Omega})} \Bigg],
\end{eqnarray}
with $ \alpha= \frac{\bar{\alpha}}{1+\bar{\alpha}}$.
Notice that this estimate improves \lemmaref{l4.4} by allowing a weaker hypothesis $\phi \in \overline{H}^{(m,\gamma+ \alpha m+5)}_0(\partial \bar{\Omega}_1)$, $\psi \in \overline{H}^{(m-1,\gamma+ \alpha m+5)}_0(\partial \bar{\Omega}_1)$ and $f \in \overline{H}^{(m-1,\gamma+ \alpha m)}_0(\bar{\Omega})$.

\begin{theorem} \label{t4.5}
Suppose that $g \in C^{m_0}$, $\phi, \psi \in \overline{C}^{\infty}(\partial \Omega)$, $f \in \overline{C}^{\infty}(\Omega)$ and $|w|_{3N+14} \le \varepsilon^{N+2}$, where $N \le m_0-2$ is the largest integer such that $\partial^{\tau}K(0,0)=0$ for all $|\tau|\le N$. If $m \le \frac{m_0-3N-21}{1+\alpha}$ and $\varepsilon =\varepsilon(m)$ is sufficiently small, then there exists a unique solution $u_{\theta} \in \overline{H}^{(m,\gamma)}(\Omega)$ of (\ref{a4.3}) for each $\theta \ge 0$. Furtheremore, there exists a constant $C_m$ independent on of $\varepsilon$ and $\theta$ such that
\begin{eqnarray*}
\|u_{\theta}\|_{H^{(m,m)}(\Omega)}&\le& C_m \varepsilon^{-N-1} \Big( \| \phi\|_{H^{(1+\alpha)m+3N+14}(\partial \Omega)} \\
&&+\| \psi\|_{H^{(1+\alpha)m+3N+13}(\partial \Omega)} + \| f\|_{H^{(1+\alpha)m+3N+10}(\mathbb{R}^2_+)}         \Big) \\
&&+C_m \varepsilon^{-N-1} \|w\|_{H^{(1+\alpha)m+3N+19}} \Big(\|\phi\|_{H^{2N+13}(\partial \Omega)} \\
&&+\| \psi\|_{H^{2N+12}(\partial \Omega)} + \| f\|_{H^{2N+9}(\mathbb{R}^2_+)}     \Big).
\end{eqnarray*}
\end{theorem}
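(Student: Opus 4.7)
The plan is to reduce the inhomogeneous Cauchy problem \eqref{a4.3} to the homogeneous one already handled by \cororef{c4.3}, and then convert the resulting weighted estimate into the stated $\mathbb{R}^2_+$-Sobolev bound by passing through \cororef{c3.1} and the Gagliardo-Nirenberg inequality. First, I invoke \lemmaref{l4.4} to build a lift $\eta_\theta \in \overline{H}^{(m,\gamma)}_0(\bar{\Omega})$ matching the Cauchy data and satisfying $\partial_{\bar{x}}^t(f-L_\theta\eta_\theta)|_{\partial\bar\Omega_1}=0$ for $0 \le t \le m-3$. Setting $\tilde u_\theta := u_\theta - \eta_\theta$, the difference solves $L_\theta\tilde u_\theta = f - L_\theta\eta_\theta$ with homogeneous Cauchy data and right-hand side vanishing to order $m-3$ on $\partial\bar\Omega_1$, so \cororef{c4.3} produces a unique $\tilde u_\theta \in H^{(m-3,\gamma)}_0(\bar\Omega)$; uniqueness of $u_\theta$ follows from this and the linearity of $L_\theta$.

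For the estimate, I add the bound \eqref{a4.13} on $\eta_\theta$ to the weighted $H^{(m-3,\gamma)}$-bound for $\tilde u_\theta$ extracted from the proof of \theoremref{t4.2}. The cusp condition \eqref{cusp parameter h} then allows the substitution $(\bar y/h)^k \le C h^{-\alpha k}$ with $\alpha = \bar\alpha/(1+\bar\alpha)$, converting every excess weight into a negative power of $h$. After extending by \lemmaref{l3.1}, I apply \cororef{c3.1} to trade each such negative power of $h$ for an extra derivative on $\mathbb{R}^2_+$; this trade accounts precisely for the $(1+\alpha)m$-type exponents in the target estimate.

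The $\Lambda_{|\tau|}$ coefficients that appear in \eqref{a4.13} are then handled by \lemmaref{l3.3}(i): products of the form $\Lambda_{|\tau|} \bar\partial^\sigma(\text{data})$ split into a pure-data term plus a term in which a high Sobolev norm of $\Lambda_{|\tau|}$ (hence of $w$, at the index $|\tau|+6$) multiplies an $L^\infty$-norm of the data, producing the two-summand structure advertised in the conclusion. The factor $\varepsilon^{-N-1}$ emerges from inverting $\bar K_\theta$ in the construction of $\zeta$ in \theoremref{t4.2}: once the resulting $\theta^{-1}$ and $\bar K^{-1}$ weights are expressed in the rescaled $(x,y)$ coordinates where $K$ vanishes to order $N$, and the hypothesis $|w|_{3N+14} \le \varepsilon^{N+2}$ is used to dominate $\theta = |\Phi(w)|_{C^5}$, exactly $N+1$ powers of $\varepsilon$ are lost.

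The principal technical obstacle is balancing the \emph{loss of weights} against the \emph{loss of derivatives}. Unlike the elliptic case, each derivative here costs $\alpha$ weights; summed over $m$ derivatives this is a linear price, and it is precisely the restriction $\bar\alpha \in (0,1)$, equivalently $\alpha < 1/2$, that keeps the resulting estimate within the admissible range of the Nash-Moser schemes of \cite{MR656198,MR546504}. In addition, the higher derivatives of $\bar h$ enter through \eqref{est_shape} and must be repackaged via the $\Lambda$-notation so that they appear only inside Gagliardo-Nirenberg-friendly products; tracking the precise $\Lambda_{|\tau|}$-indices and confirming that $\|w\|$ never appears at an order exceeding $(1+\alpha)m+3N+19$ is where the bookkeeping is most delicate.
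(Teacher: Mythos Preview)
Your reduction to \cororef{c4.3} for existence matches the paper. The estimate, however, has a genuine gap.

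You propose to extract a weighted bound on $\tilde u_\theta$ from the proof of \theoremref{t4.2}. But the only quantitative output of that proof is the dual inequality \eqref{a4.9}, whose constant carries a factor $\theta^{-1}$. The theorem you are proving demands a constant independent of $\theta$, and your mechanism for removing $\theta^{-1}$ is wrong in sign: the hypothesis $|w|_{3N+14}\le\varepsilon^{N+2}$ gives an \emph{upper} bound $\theta=|\Phi(w)|_{C^5}\le C\varepsilon^{N+2}$, hence a \emph{lower} bound $\theta^{-1}\ge C^{-1}\varepsilon^{-N-2}$, not an upper bound. So the $\theta^{-1}$ cannot be traded for $\varepsilon^{-N-1}$ this way.

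The paper avoids \eqref{a4.9} entirely for the estimate. It writes $v_\theta=e^{-\bar y^2/2}u_\theta$, commutes $\overline{L}_\theta$ with $\partial_{\bar x}^m$, and integrates against the multiplier $-b_m\partial_{\bar y}\partial_{\bar x}^m\bar v_\theta$ with $b_m=h^{-2(\gamma-m)}\bar K_\theta^{-1}e^{-\lambda\bar y}$ to obtain a fresh, $\theta$-independent energy identity \eqref{a4.15}. The factor $\varepsilon^{-N-1}$ then arises from a Hardy-type step: since $|\bar K|\ge C^{-1}\varepsilon^{2N+2}(h(\bar y)-|\bar x|)^{N+1}$ and the forcing $\bar f-\overline{L}_\theta\eta_\theta$ has been arranged (via \lemmaref{l4.4} at order $m+N+1$) to vanish to order $N+1$ on $\partial\bar\Omega_1$, one trades the weight $\bar K_\theta^{-1}$ for $N+1$ extra $\bar x$-derivatives of the forcing. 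The hypothesis $|w|_{3N+14}\le\varepsilon^{N+2}$ is used for something else entirely: it makes the coefficient terms in \eqref{a4.21} of size $O(\varepsilon)$ so they can be absorbed on the left. You have also missed the two-regime structure ($m\le 2N+6$ versus $m\ge 2N+7$) and, in the latter, the recursive construction of auxiliary lifts $\eta_\theta^{(s)}$ needed to keep the commutator terms under control.
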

\begin{proof}
For all $\phi$, $\psi$ and $f$ as in the statement, we have  $\phi \in \overline{H}^{(m_0-4,\gamma+m_0+3)}_0(\partial \bar{\Omega}_1)$, $\psi \in \overline{H}^{(m_0-5,\gamma+m_0+3)}_0(\partial \bar{\Omega}_1)$ and $f \in \overline{H}^{(m_0-5,\gamma+m_0-2)}_0(\bar{\Omega})$ by a coordinate change. Let $\eta_{\theta} \in \overline{H}^{(m_0-4,\gamma+2)}(\bar{\Omega})$ be as in \lemmaref{l4.4} and $\bar{u}_{\theta} \in H_0^{(m_0-6,\gamma)}(\bar{\Omega})$ be as in \cororef{c4.3} with $\hat{f}=f -L_{\theta} \eta_{\theta} \in H^{(m_0-6,\gamma)}_0(\bar{\Omega})$, then clearly, upon coordinate change, $u_{\theta} = \bar{u}_{\theta}+\eta_{\theta}$ establishes the existence. Uniqueness follows the estimate, which is obtained as follows.

We first set $v_{\theta}=e^{-\frac{1}{2}\bar{y}^2} u_{\theta}$ and observe that
\begin{eqnarray*}
\overline{L}_{\theta} v_{\theta}&:=& \partial_{\bar{x}}(\bar{K}_{\theta} \partial_{\bar{x}} v_{\theta}) + \partial^2_{\bar{y}} v_{\theta} +\bar{C} \partial_{\bar{x}} v_{\theta} +(2 \bar{y}+\bar{D}) \partial_{\bar{y}} v_{\theta}\\
&&+(1+\bar{y}^2+\bar{y}\bar{D}) v_{\theta} =e^{-\frac{1}{2}\bar{y}^2}f := \bar{f}.
\end{eqnarray*}
Then define the operator
\begin{eqnarray*}
\overline{L}_{\theta}^{(m)} v &:=& \partial_{\bar{x}}(\bar{K}_{\theta}\partial_{\bar{x}} v) + \partial^2_{\bar{y}} v +(\bar{C}+m \partial_{\bar{x}}\bar{K}_{\theta})\partial_{\bar{x}}v +(2 \bar{y}+\bar{D})\partial_{\bar{y}}v\\
&&+ \Big( 1+\bar{y}^2 + \bar{y} \bar{D} + m \partial_{\bar{x}} \bar{C} + \frac{m(m+1)}{2} \partial_{\bar{x}}^2 \bar{K}_{_\theta} \Big) v.
\end{eqnarray*}
Thus,
\begin{eqnarray*}
\overline{L}_{\theta}^{(m)} \partial_{\bar{x}}^m v_{\theta}&=& \partial^m_{\bar{x}} \bar{f} - \sum_{s=3}^m \binom{m}{s} \partial_{\bar{x}}^s \bar{K}_{\theta} \partial_{\bar{x}}^{m-s+2} v_{\theta} \\
&&-\sum_{s=2}^m \binom{m}{s} \partial_{\bar{x}}^s (\bar{C}+ \partial_{\bar{x}} \bar{K}_{\theta}) \partial_{\bar{x}}^{m-s+1} v_{\theta}\\
&&- \sum_{s=1}^m \binom{m}{s}[\partial_{\bar{x}}^s \bar{D} \partial_{\bar{x}}^{m-s}(\partial_{\bar{y}} v_{\theta})+ \bar{y} \partial_{\bar{x}}^s \bar{D} \partial_{\bar{x}}^{m-s} v_{\theta}]\\
&:=& \partial_{\bar{x}}^m \bar{f} + \bar{f}^{(m)}(v_{\theta}).
\end{eqnarray*}
We first assume $m \le 2N+6$. Let $\eta_{\theta}$ be as in \lemmaref{l4.4} with $\eta_{\theta} |_{\partial \bar{\Omega}_1}=v_{\theta}|_{\partial \bar{\Omega}_1}$, $\partial_{\bar{y}} \eta_{\theta}|_{\partial \bar{\Omega}_1} = \partial_{\bar{y}} v_{\theta}|_{\partial \bar{\Omega}_1}$ and
\begin{equation*}
\partial_{\bar{x}}^t (\bar{f}-\overline{L}_{\theta} \eta_{\theta})|_{\partial \bar{\Omega}_1}=0, \ \ \ \ \ \ 0 \le t \le m+N+1.
\end{equation*}
So that we have $\bar{\partial}^{\tau} \eta_{\theta}|_{\partial \bar{\Omega}_1}=\bar{\partial}^{\tau} v_{\theta}|_{\partial \bar{\Omega}_1}$ for all $|\tau| \le m+N+3$. Cosider the function $\bar{v}_{\theta}:= v_{\theta}-\eta_{\theta}$ , which satisfies
$$
\overline{L}_{\theta}^{(m)} \partial_{\bar{x}}^m \bar{v}_{\theta} = \partial_{\bar{x}}^m(\bar{f}-\overline{L}_{\theta}\eta_{\theta})+\bar{f}^{(m)}(\bar{v}_{\theta}).
$$
Let $b_m=h^{-2\gamma+ 2m}\bar{K}_{\theta}^{-1}e^{-\lambda \bar{y}}$ and integrate the following by parts:
\begin{eqnarray}\label{a4.15}
&&(-b_m \partial_{\bar{}y}\partial_{\bar{x}}^m \bar{v}_{\theta}, \overline{L}_{\theta}^{(m)} \partial_{\bar{x}}^m \bar{v}_{\theta})= \\
&& \quad \quad \quad \quad \quad \int_{\bar{\Omega}} \Big[ -\frac{1}{2}\partial_{\bar{y}}(b_m \bar{K}_{\theta}) (\partial_{\bar{x}}^{m+1} \bar{v}_{\theta})^2   \nonumber \\
&& \quad \quad \quad \quad \quad \quad \quad +(\partial_{\bar{x}} b_m \bar{K}_{\theta}- m b_m \partial_{\bar{x}}\bar{K}_{\theta}-b_m \bar{C}) \partial_{\bar{x}}^{m+1} \bar{v}_{\theta} \partial_{\bar{y}}\partial_{\bar{x}}^m \bar{v}_{\theta}    \Big] \nonumber \\
&& \quad \quad \quad \quad +\int_{\bar{\Omega}} \Big(  \frac{1}{2} \partial_{\bar{y}} b_m - b_m (2 \bar{y}+\bar{D})  (\partial_{\bar{y}} \partial_{\bar{x}}^m \bar{v}_{\theta}) \Big)^2 \nonumber \\
&& \quad \quad \quad \quad + \int_{\bar{\Omega}} \frac{1}{2} \partial_{\bar{y}}\Big[ b_m \Big( 1+ \bar{y}^2 +\bar{y} \bar{D}     m \partial_{\bar{x}} \bar{C} +\frac{m(m+1)}{2} \partial_{\bar{x}}^2   \bar{K}_{\theta}   \Big) \Big](\partial_{\bar{x}}^m \bar{v}_{\theta})^2 \nonumber \\
&& \quad \quad \quad \quad +\int_{\partial \bar{\Omega}} \Big[ \frac{1}{2}b_m \bar{K}_{\theta} (\partial_{\bar{x}}^{m+1}\bar{v}_{\theta})^2 \nu_2 - b_m \bar{K}_{\theta} \partial_{\bar{x}}^{m+1} \bar{v}_{\theta} \partial_{\bar{y}}\partial_{\bar{x}}^m \bar{v}_{\theta} \nu_1 \Big] \nonumber \\
&& \quad \quad \quad \quad - \int_{\partial \bar{\Omega}} \frac{1}{2} b_m \Big[ \frac{1}{2} b_m (\partial_{\bar{y}} \partial_{\bar{x}}^m \bar{v}_{\theta})^2 \nu_2   \nonumber \\
&& \quad \quad \quad \quad \quad \quad \quad + \Big( 1+\bar{y}^2 +\bar{y}\bar{D}+m \partial_{\bar{x}} \bar{C} +\frac{m(m+1)}{2} \partial_{\bar{x}}^2 \bar{K}_{\theta}     \Big)(\partial_{\bar{x}}^m \bar{v}_{\theta})^2 \nu_2 \Big]. \nonumber
\end{eqnarray}
The boundary integral has the correct sign along $\partial \bar{\Omega}_2$ and vanishes along $\partial \bar{\Omega}_1$. The same computation as in (\ref{a4.7}) yields
\begin{eqnarray*}
&&\lambda (\|h^{-\gamma+m} \partial_{\bar{x}}^{m+1} \bar{v}_{\theta} \|+\|\sqrt{|b_m|} \partial_{\bar{y}} \partial_{\bar{x}}^m \bar{v}_{\theta}\|+  \| \sqrt{|b_m|} \partial_{\bar{x}}^m\bar{v}_{\theta}\|) \\
&& \quad \quad \le C(\|\sqrt{|b_m|} \partial_{\bar{x}}^m (\bar{f}-\overline{L}_{\theta}\eta_{\theta}) \| + \|  \sqrt{|b_m|}\bar{f}^{(m)}(\bar{v}_{\theta}) \|).
\end{eqnarray*}
With the condition $m \le 2N+6$ and $|w|_{C^{2N+10}} \le 1$ we have
$$
\| \sqrt{|b_m|} \bar{f}^{(m)}(\bar{v}_{\theta}) \| \le C_m \sum_{s=0}^{m-1} (\| \sqrt{|b_s|} \partial_{\bar{x}}^s \bar{v}_{\theta} \| +\|  \sqrt{|b_s|} \partial_{\bar{y}} \partial_{\bar{x}}^s \bar{v}_{\theta}    \|).
$$
Then we exploit the fact that $\bar{K}$ vanishes to order $N$ at the origin, there is a constant $C$ such that $|\bar{K}| \ge C ^{-1} \varepsilon^{2N+2} (h(\bar{y}) - |\bar{x}|  )^{N+1}$ in $\bar{\Omega}$. A similar computation as in (\ref{a4.8.5}) shows that
\begin{eqnarray*}
\| \sqrt{|b_m|}\partial_{\bar{x}}^m (\bar{f}-\overline{L}_{\theta}\eta_{\theta}) \|^2\le C \varepsilon^{-2N-2} \int_{\bar{\Omega}}[h^{-\gamma+m} \partial_{\bar{x}}^{m+N+1}(\bar{f}-\overline{L}_{\theta}\eta_{\theta})]^2.
\end{eqnarray*}
Hereafter we use $\{\tau_i \ | \ i \in \mathbb{N} \}$ to denote multi-indices. Applying (\ref{a4.13}) and summing from 0 to m produces
\begin{eqnarray}
&&\sum_{s=0}^{m+1}\|h^{-\gamma+s} \partial_{\bar{x}}^{s+1} \bar{v}_{\theta} \|^2+ \sum_{s=0}^m(\|\sqrt{|b_s|} \partial_{\bar{y}} \partial_{\bar{x}}^s \bar{v}_{\theta}\|^2+  \| \sqrt{|b_s|} \partial_{\bar{x}}\bar{v}_{\theta}\|^2) \\
&& \quad \le C_m \varepsilon^{-N-1} \Big[\int_{\partial \bar{\Omega}_1} \sum_{|\tau_1+\tau_2|=0}^{m+N+3}  (h^{-\gamma-\alpha m -(N+3)-5+|\tau_1+\tau_2|}\Lambda_{|\tau_1|} \bar{\partial}^{\tau_2} \phi )^2 \nonumber\\
&& \quad \quad \quad \quad\int_{\partial \bar{\Omega}_1} \sum_{|\tau_1+\tau_2|=0}^{m+N+2} (h^{-\gamma-\alpha m-(N+3)-5+|\tau_1+\tau_2|}\Lambda_{|\tau_1|} \bar{\partial}^{\tau_2} \psi )^2 \nonumber \\
&& \quad \quad \quad \quad \int_{\bar{\Omega}} \sum_{|\tau_1+\tau_2|=0}^{m+N+2} \int_{\bar{\Omega}} (h^{-\gamma-\alpha m -(N+3)+|\tau_1+\tau_2|}\Lambda_{|\tau_1|} \bar{\partial}^{\tau_2} f )^2 \Big], \nonumber
\end{eqnarray}
when $\lambda= \lambda(m)$ is sufficiently large. From here it is straightforward to estimate all the $\bar{x}$-derivatives of $u_{\theta}$, then the usual boot-strap procedure yields the desired estimate in this special case.

We then assume $m \ge 2N +7$. We define recursively,
\begin{eqnarray*}
\bar{f}^{(m)}_1(v_{\theta})&=& - \sum_{s=2N +7}^m \binom{m}{s} \partial_{\bar{x}}^s \bar{K}_{\theta} \partial_{\bar{x}}^{m-s+2}v_{\theta}\\
&&-\ \sum_{s=2N +6}^m \binom{m}{s} \partial_{\bar{x}}^s (\bar{C}+ \partial_{\bar{x}}\bar{K}_{\theta}) \partial_{\bar{x}}^{m-s+1}v_{\theta}\\
&&- \sum_{s=2N +6}^m \binom{m}{s} \partial_{\bar{x}}^s \bar{D} \partial_{\bar{y}}\partial_{\bar{x}}^{m-s}v_{\theta} -\bar{y}\sum_{s=2N +5}^m \binom{m}{s}\partial_{\bar{x}}^s \bar{D}\partial_{\bar{x}}^{m-s}v_{\theta} \\
&&-\sum_{s=3}^{2N +6}\binom{m}{s} \partial_{\bar{x}}^s \bar{K}_{\theta} \eta_{\theta}^{(m-s+2)}-\sum_{s=2}^{2N +5} \binom{m}{s} \partial_{\bar{x}}^s(\bar{C}+\partial_{\bar{x}}\bar{K}_{\theta}) \eta_{\theta}^{(m-s+1)}\\
&&-\sum_{s=1}^{2N +5} \binom{m}{s} \partial_{\bar{x}}^s \bar{D} \partial_{\bar{y}} \eta_{\theta}^{(m-s)}-\bar{y} \sum_{s=1}^{2N +4} \binom{m}{s} \partial_{\bar{x}}^s \bar{D} \eta_{\theta}^{(m-s)},
\end{eqnarray*}
and
\begin{eqnarray*}
\bar{f}^{(m)}_2(v_{\theta})&=&-\sum_{s=3}^{2N+6} \binom{m}{s} \partial_{\bar{x}}^s \bar{K}_{\theta} v^{(m-s+2)}_{\theta}\\
&&-\sum_{s=2}^{2N+5} \binom{m}{s} \partial_{\bar{x}}^s (\bar{C}+\partial_{\bar{x}} \bar{K}_{\theta}) v^{(m-s+1)}_{\theta}\\
&&-\sum_{s=1}^{2N+5}\binom{m}{s} \partial_{\bar{x}}^s \bar{D} \partial_{\bar{y}} v_{\theta}^{(m-s)}-\bar{y}\sum_{s=1}^{2N+4} \binom{m}{s} \partial_{\bar{x}}^s\bar{D} v_{\theta}^{(m-s)},
\end{eqnarray*}
where $v^{(s)}:= \partial_{\bar{x}}^s v_{\theta}-\eta_{\theta}^{(s)}$ with $\eta_{\theta}^{(s)}=\partial_{\bar{x}}^s \eta_{\theta}$ for $0 \le s \le 2N+6$ and for $s \ge 2N+7  $, $\eta_{\theta}^{(s)}$ is given by \lemmaref{l4.4} such that $\eta_{\theta}^{(s)}|_{\partial \bar{\Omega}_1}=\partial_{\bar{x}}^sv_{\theta}|_{\partial \bar{\Omega}_1}$, $\partial_{\bar{y}} \eta_{\theta}^{(s)}|_{\partial \bar{\Omega}_1}=\partial_{\bar{y}}\partial_{\bar{x}}^sv_{\theta}|_{\partial \bar{\Omega}_1}$ with
\begin{equation}
\partial_{\bar{x}}^t (\partial_{\bar{x}}^s\bar{f}+\bar{f}^{(s)}_1(v_{\theta})-\overline{L}^{(s)}_{\theta}\eta_{\theta}^{(s)})|_{\partial \bar{\Omega}_1}=0, \ \ \ \ \text{ for } \ 0 \le t \le N+1.
\end{equation}
Note that $\bar{f}^{(m)}=\bar{f}^{(m)}_1+\bar{f}^{(m)}_2$ under this definition. Because $\phi$, $\psi$ and $f$ vanish in a neighborhood of the origin, $u_{\theta}$ may be chosen from a weighted Sobolev space with arbitrarily large weight. We only need to make sure sufficient differentiability to construct $\eta^{(s)}_{\theta}$. Since
$$\bar{f}_1^{(s)} \in \overline{H}^{\text{min}(m_0-s-6,N+3)}(\bar{\Omega}),$$ $$\partial_{\bar{x}}^s v_{\theta} |_{\partial \bar{\Omega}_1} \in \overline{H}^{m_0-s-7}(\partial \bar{\Omega}_1),$$ and $$\partial_{\bar{y}} \partial_{\bar{x}}^s v_{\theta} |_{\partial \bar{\Omega}_1} \in \overline{H}^{m_0-s-8}(\partial \bar{\Omega}_1),$$
we must have $N+3 \le m_0-s-8$.

For $2N+7 \le s \le m$, $v^{(s)}_{\theta}$ satisfies
$$
\overline{L}^{(s)}_{\theta} v^{(s)}_{\theta}=(\partial_{\bar{x}}^s \bar{f}+\bar{f}^{(s)}_1(v_{\theta})-\overline{L}^{(s)}_{\theta} \eta_{\theta}^{(s)}) +\bar{f}^{(s)}_2(v_{\theta}).
$$
Then (\ref{a4.15}) results in
\begin{eqnarray} \label{a4.18}
&&\lambda(\|h^{-\gamma+m}\partial_{\bar{x}} v^{(s)}_{\theta} \|+ \| \sqrt{|b_s|} \partial_{\bar{y}} v_{\theta}^{(s)}\| + \| \sqrt{|b_s|} v^{(s)}_{\theta}\| )\\
&& \quad \le C(\| \sqrt{|b_s|}(\partial_{\bar{x}}^s \bar{f}+\bar{f}^{(s)}_1(v_{\theta})-\overline{L}^{(s)}_{\theta}\eta^{(s)}_{\theta})\| + \| \sqrt{|b_s|}\bar{f}^{(s)}_2 (v_{\theta})\|        ). \nonumber
\end{eqnarray}
First we compute with $|w|_{C^{2N+11}} \le 1$ that
\begin{eqnarray}
\| \sqrt{|b_s|}\bar{f}^{(s)}_2 (v_{\theta}) \|^2 &\le& C_s \int_{\bar{\Omega}}e^{-\lambda \bar{y}} |\bar{K}_{\theta}|^{-1} h^{-2\gamma+2s} \sum_{l=0}^{s-1} [(v^{(l)}_{\theta})^2 + (\partial_{\bar{y}}v^{(l)}_{\theta})^2]\\
&\le& C_s \sum_{l=3N+11}^{s-1} \int_{\bar{\Omega}}e^{-\lambda \bar{y}} |\bar{K}_{\theta}|^{-1} h^{-2\gamma+2l}[(v^{(l)}_{\theta})^2 + (\partial_{\bar{y}}v^{(l)}_{\theta})^2] \nonumber \\
&& + C_s \int_{\bar{\Omega}} \sum_{l=0}^{3N+10}e^{-\lambda \bar{y}} |\bar{K}_{\theta}|^{-1} h^{-2\gamma+2l}[(\partial_{\bar{x}}^l \bar{v}_{\theta})^2+(\partial_{\bar{y}} \partial_{\bar{x}}^l \bar{v}_{\theta})^2]. \nonumber
\end{eqnarray}
Again as in the previous case we compute:
\begin{eqnarray} \label{a4.20}
&&\| \sqrt{|b_s|}(\partial_{\bar{x}}^s \bar{f}+\bar{f}^{(s)}_1(v_{\theta})-\overline{L}^{(s)}_{\theta} \eta_{\theta}^{(s)})\|^2\\
&& \quad\le  C \varepsilon^{-2N-2} \int_{\bar{\Omega}}[h^{-\gamma+s} \partial_{\bar{x}}^{N+1}(\partial_{\bar{x}}^s \bar{f} +\bar{f}^{(s)}_1(v_{\theta})-\overline{L}^{(s)}_{\theta} \eta^{(s)}_{\theta}) ]^2.\nonumber
\end{eqnarray}
A direct computation shows it suffices to estimate the following term in (\ref{a4.20}):
\begin{eqnarray*}
\| h^{-\gamma+s} \partial_{\bar{x}}^{N+1}\overline{L}^{(s)}_{\theta} \eta^{(s)}_{\theta} \|^2 &\le& \sum_{|\tau|=0}^{N+3} \| h^{-\gamma+s-(N+3)+|\tau|} \bar{\partial}^{\tau} \eta_{\theta}^{(s)}\|^2 \\
&\le&\sum_{|\tau|=0}^{N+3} \| h^{-\gamma+s-(N+3)+|\tau|} \bar{\partial}^{\tau} \eta_{\theta}^{(s)}\|_{L^2(\partial \bar{\Omega}_1)}^2.
\end{eqnarray*}
By the computation for \lemmaref{l4.4}:
\begin{eqnarray*}
&&\sum_{|\tau|=0}^{N+3} \| h^{-\gamma+s-(N+3)+|\tau|} \bar{\partial}^{\tau} \eta_{\theta}^{(s)}\|_{L^2(\partial \bar{\Omega}_1)}^2  \\
&&\quad \le \sum_{|\tau_1+\tau_2|=0}^{s+N+3}\| h^{-\gamma  -\alpha s - 3(N+3)-5 + |\tau_1+\tau_2|} \Lambda_{|\tau_1|} \bar{\partial}^{\tau_2} \phi  \|^2_{L^2(\partial \bar{\Omega}_1)}\\
&&\quad \quad +\sum_{|\tau_1+\tau_2|=0}^{s+N+2}\| h^{-\gamma -\alpha s - 3(N+3)-5 + |\tau_1+\tau_2|} \Lambda_{|\tau_1|} \bar{\partial}^{\tau_2} \psi  \|^2_{L^2(\partial \bar{\Omega}_1)}\\
&&\quad \quad +\sum_{|\tau_1+\tau_2|=0}^{s+N+2}\| h^{-\gamma -\alpha s - 3(N+3) + |\tau_1+\tau_2|} \Lambda_{|\tau_1|} \bar{\partial}^{\tau_2} \bar{f}  \|^2\\
&&\quad \quad +\sum_{|\tau_1+\tau_2|=0}^{N+3}\| h^{-\gamma+s- (1+\alpha)(N+3) + |\tau_1+\tau_2|} \Lambda_{|\tau_1|} \bar{\partial}^{\tau_2} \bar{f}^{(s)}_1 (v_{\theta}) \|_{L^2(\partial \bar{\Omega})}^2.
\end{eqnarray*}
Here we pay special attention to the last term above, because it produces terms to be absorbed into the left hand side of (\ref{a4.18}),
\begin{eqnarray}
&& \sum_{|\tau_1+\tau_2|=0}^{N+3}\| h^{-\gamma+s- (1+\alpha)(N+3) + |\tau_1+\tau_2|} \Lambda_{|\tau_1|} \bar{\partial}^{\tau_2} \bar{f}^{(s)}_1 (v_{\theta}) \|_{L^2(\partial \bar{\Omega})}  \nonumber \\
&&\quad \le  \sum_{|\tau_1 + \tau_2|=0}^{N+3} \sum_{l=2N+7}^s\| h^{-\gamma+s- (1+\alpha)(N+3) + |\tau_1+\tau_2|} \Lambda_{\tau_1} \bar{\partial}^{\tau_2} (\partial_{\bar{x}}^l \bar{K}_{\theta} \partial_{\bar{x}}^{s-l+2} v_{\theta})\| \nonumber \\
&&\quad  + \sum_{|\tau_1+\tau_2|=0}^{N+3} \sum_{l=3}^{2N+6}\| h^{-\gamma+s- (1+\alpha)(N+3) +|\tau_1 +\tau_2|} \Lambda_{\tau_1} \bar{\partial}^{\tau_2} (\partial_{\bar{x}}^l \bar{K}_{\theta} \eta^{(s-l+2)}_{\theta})\|_{L^2(\partial \bar{\Omega})} \nonumber \\
&&\quad  + \text{ \ other integrals from $\bar{f}_1^{(s)}(v_{\theta})$}. \label{a4.20.5}
\end{eqnarray}
We only cope with the integrals with $\bar{K}_{\theta}$, the rest can be treated similarly. First consider the integral with $l \ge 2N+7$. We change the coordinate to $(x,y)$ and then apply \lemmaref{l3.1} to extend $v_\theta$ to $\mathbb{R}^2_+$. Since the extended $v_{\theta}$ vanishes outside $\{ -2h(y) \le x \le 2 h(y) \}$, we may apply \cororef{c3.1}
\begin{eqnarray*}
&&\varepsilon^{-N-1} \sum_{|\tau_1 +\tau_2|=0}^{N+3}\| h^{-\gamma+s- 2(N+3)+1 + |\tau_1+\tau_2|} \Lambda_{\tau_1} \bar{\partial}^{\tau_2} (\partial_{\bar{x}}^l \bar{K}_{\theta} \partial_{\bar{x}}^{s-l+2} v_{\theta})\| \\
&& \quad \le\varepsilon^{-N-1} \sum_{|\tau_1 + \tau_2 +\tau_3|=0}^{s-2N-9} \|\partial^{\gamma-s+2(N+3)} \partial^{\tau_1}(\partial_{\bar{x}}^{2N+7}\bar{K}) \Lambda_{\tau_2} \partial^{\tau_3}v_{\theta}      \|_{L^2(\mathbb{R}^2_+)}\\
&& \quad \le \varepsilon^{-N-1} \Big(|\partial_{\bar{x}}^{2N+7} \bar{K} |_{L^{\infty}(\mathbb{R}^2_+)} \| v_{\theta}  \|_{H^{\gamma+1}(\mathbb{R}^2_+)}+\| \bar{K}  \|_{H^{\gamma+2N+8}(\mathbb{R}^2_+)} \|v_{\theta} \|_{L^{\infty}(\mathbb{R}^2_+)} \Big).
\end{eqnarray*}

We want to absorb the term
$$\varepsilon^{-N-1}|\partial_{\bar{x}}^{2N+7} \bar{K} |_{L^{\infty}(\mathbb{R}^2_+)} \| v_{\theta}  \|_{H^{\gamma+1}(\mathbb{R}^2_+)},$$
as well as other similar terms generated from $f_1^{(s)}$, to the left hand side of (\ref{a4.18}). To do that, we need
\begin{eqnarray} \label{a4.21}
&&\varepsilon^{-N-1}|\partial_{\bar{x}}^{2N+7} \bar{K} |_{L^{\infty}(\mathbb{R}^2_+)}, \ \ \ \ \ \ \ \ \ \ \ \ \ \ \varepsilon^{-N-1}|\partial_{\bar{x}}^{2N+6} \bar{C} |_{L^{\infty}(\mathbb{R}^2_+)}, \nonumber \\
&&\varepsilon^{-N-1}|\partial_{\bar{x}}^{2N+6} \bar{D} |_{L^{\infty}(\mathbb{R}^2_+)}, \ \ \ \ \ \ \text{ and } \ \ \varepsilon^{-N-1}| \bar{y}\partial_{\bar{x}}^{2N+5} \bar{D} |_{L^{\infty}(\mathbb{R}^2_+)}
\end{eqnarray}
to be small. Observe that all these $\bar{K}, \bar{C} \text{ and } \bar{D}$ consist of two families of functions. One of them are originally defined on $(\hat{x}, \hat{y})$. The functions $g$, $\Gamma_{ij}^t$, $K$ and $z_0$ are of this family. Every time they are differentiated with respect to $\bar{x}$, $\varepsilon^2$ will be produced. The other family consists of $w$, $\bar{x}$, $\bar{y}$ and ${a^{ij}}$. We see that $w$ itself is small by the assumption $|w|_{C^{3N+14}} \le \varepsilon^{N+2}$. The function $a^{ij}= \nabla_{ij}z_0 + \varepsilon^5 \nabla_{ij}w$ also produces $\varepsilon^2$ (until it reaches $\varepsilon^{N+2}$) each time it is differentiated with respect to $\bar{x}$. For $\bar{x}$, consider the second derivative $\frac{\partial^2 \bar{x}}{\partial x^2}$ satisfies
$$
\frac{\partial}{\partial y} \Big(\frac{\partial^2 \bar{x}}{\partial x^2}\Big)+\frac{a^{12}}{a^{22}}\frac{\partial}{\partial x} \Big(\frac{\partial^2 \bar{x}}{\partial x^2}\Big) +\partial_{x} \Big( \frac{a^{12}}{a^{22}}\Big)\frac{\partial^2 \bar{x}}{\partial x^2}=\partial_{x}^2 \Big( \frac{a^{12}}{a^{22}}\Big)\frac{\partial \bar{x}}{\partial x},
$$
and vanishes along $y=0$. By the standard method for the first order PDE, we may control its sup norm by the sup norm of the nonhomogeneous term, $\partial_{x}^2 \Big( \frac{a^{12}}{a^{22}}\Big)\frac{\partial \bar{x}}{\partial x}$. Since for all $l \ge 2$ and $\partial^l_x \bar{x} $ vanishes along the boundary $\{  y=0 \}$, again $\varepsilon^2$ will be produced (until it reaches $\varepsilon^{N+2}$)  each time we differentiate $\partial_x \bar{x}$ and $\partial_y \bar{x}$. Since the terms in (\ref{a4.21}) are differentiated for at least $2N+5$ times, we have
\begin{eqnarray*}
&&\varepsilon^{-N-1} \Big( |\partial_{\bar{x}}^{2N+7} \bar{K} |_{L^{\infty}(\mathbb{R}^2_+)} +|\partial_{\bar{x}}^{2N+6} \bar{C} |_{L^{\infty}(\mathbb{R}^2_+)} \\
&& \quad \quad \quad \quad  +|\partial_{\bar{x}}^{2N+6} \bar{D} |_{L^{\infty}(\mathbb{R}^2_+)}+| \bar{y}\partial_{\bar{x}}^{2N+5} \bar{D} |_{L^{\infty}(\mathbb{R}^2_+)} \Big)=O(\varepsilon).
\end{eqnarray*}

With the condition $|w|_{C^{3N+14}} \le \varepsilon^{N+2}$, the part with $l \le 2N+6$ in (\ref{a4.20.5}) becomes
\begin{eqnarray*}
&&\varepsilon^{-N-1} \sum_{|\tau|=0}^{N+3} \sum_{l=3}^{2N+6}\| h^{-\gamma+s-(1+\alpha)(N+3) + |\tau|} \bar{\partial}^{\tau} (\partial_{\bar{x}}^l \bar{K}_{\theta} \eta^{(s-l+2)}_{\theta})\|_{L^2(\partial \bar{\Omega})}\\
&& \quad \quad \le C_s \varepsilon^{-N-1} \sum_{|\tau|=0}^{N+3} \sum_{l=s-2N-4}^{s-1}\|h^{-\gamma+s-(1+\alpha)(N+3)+|\tau|} \bar{\partial}^{\tau} \eta_{\theta}^{(l)}     \|_{L^2(\partial \bar{\Omega})}. \\
\end{eqnarray*}
Then by the (\ref{a4.11}) and mathematical induction, we can sum up the result from $s= 2N+7$ to $s=m$:

\begin{eqnarray*}
&&\sum_{s=2N+7}^{m}\|h^{-\gamma+s} \partial_{\bar{x}} v^{(s)}_{\theta}\| +\| h^{-\gamma+s} \partial_{\bar{y}} v^{(s)}_{\theta} \| +\| h^{-\gamma+s} v^{(s)}_{\theta}\| \\
&&\quad \quad \le \sum_{s=2N+7}^{m}\|h^{-\gamma+s} \partial_{\bar{x}} v^{(s)}_{\theta}\| +\| \sqrt{|b_s|} \partial_{\bar{y}} v^{(s)}_{\theta} \| +\| \sqrt{|b_s|} v^{(s)}_{\theta}\| \\
&& \quad \quad \le C_{m,\gamma}\varepsilon^{-N-1} \Big[ \sum_{|\tau_1+\tau_2|=0}^{m+N+3} \| h^{-\gamma-\alpha m -3(N+3)-5 + |\tau_1+\tau_2|} \Lambda_{\tau_1} \bar{\partial}^{\tau_2} \phi \|_{L^2(\partial \bar{\Omega})}\\
&& \quad \quad \quad +\sum_{|\tau_1+\tau_2|=0}^{m+N+2} \| h^{-\gamma-\alpha m -3(N+3)-5 + |\tau_1+\tau_2|} \Lambda_{\tau_1} \bar{\partial}^{\tau_2} \psi \|_{L^2(\partial \bar{\Omega})} \\
&& \quad \quad \quad +\sum_{|\tau_1+\tau_2|=0}^{m+N+2} \| h^{-\gamma-\alpha m -3(N+3) + |\tau_1+\tau_2|} \Lambda_{\tau_1} \bar{\partial}^{\tau_2} f \| \\
&& \quad \quad \quad + \sum_{l=0}^{2N+6} \sum_{|\tau|=0}^{N+3} \|h^{-\gamma-(1+\alpha)(N+3)+l+2|\tau|} \bar{\partial}^{\tau} \partial_{\bar{x}}^l \eta_{\theta}\|_{L^2(\partial \bar{\Omega}_1)} \Big] \\
&& \quad \quad \quad +\varepsilon \|v_{\theta}\|_{H^{\gamma+1}(\mathbb{R}^2_+)} + C_{m,\gamma}\varepsilon^{-N-1} \Big(1+ \|w \|_{H^{\gamma+2N+13}(\mathbb{R}^2_+)} \Big) \|v_{\theta}\|_{L^{\infty}(\mathbb{R}^2_+)}.
\end{eqnarray*}
After adding the estimate from the case $m \le 2N+6$, it is straightforward to derive an estimate (by the right hand side of our last inequality) on every derivative on $u_{\theta}$ with respect to $\bar{x}$. Solving for the derivatives of $u_{\theta}$ with respect to $\bar{y}$ from our linear equation yields a similar estimate on all the derivatives of $u_{\theta}$. Then we pick $\gamma=m$, change coordinate to $(x,y)$ and choose $\varepsilon$ sufficiently small. Eventually, \lemmaref{l3.1}, \cororef{c3.1} and \lemmaref{l3.3} yield
\begin{eqnarray*}
\|u_{\theta}\|_{H^{(m,m)}(\Omega)}&\le& C_m \varepsilon^{-N-1} \Big( \| \phi\|_{H^{(1+\alpha)m+3N+14}(\partial \Omega)} \\
&&+\| \psi\|_{H^{(1+\alpha)m+3N+13}(\partial \Omega)} + \| f\|_{H^{(1+\alpha)m+3N+10}(\mathbb{R}^2_+)}         \Big) \\
&&+C_m \varepsilon^{-2N-2} \|w\|_{H^{(1+\alpha)m+3N+19}} \Big(\|\phi\|_{H^{2N+13}(\partial \Omega)} \\
&&+\| \psi\|_{H^{2N+12}(\partial \Omega)} + \| f\|_{H^{2N+9}(\mathbb{R}^2_+)}     \Big).
\end{eqnarray*}
\end{proof}

\section{The Nash-Moser iteration with hyperbolic cusps}
In this section, we apply Nash-Moser iteration to solve the Cauchy problem:
\begin{eqnarray*}
\Phi(w)=0 \text{ in } \Omega, \ \ \ \ \ \ w|_{\partial \Omega}=0 \ \ \ \text{ and } \ \partial_{\nu} w|_{\partial \Omega}=\psi.
\end{eqnarray*}
We will assume that the boundary data $\psi$ is in $H_0^{m_0-10}(\partial \Omega_1)$ and that $\| \psi\|_{m_0-10} \le C \varepsilon^{2A}$, where $\| \cdot \|_m := \| \cdot \|_{H^m(\mathbb{R}^2_+)}$ and $A$ is a large parameter which we will determine. Since $\psi$ is the normal derivative of the solution obtained in the bordering elliptic region restricted to the shared boundary, the assumption above is legitimate.

We start with an approximate solution $z_0$ so that $\|\Phi(0)\|_{\rho+1} \le C \varepsilon^{2A}$. Then by the proof of \lemmaref{l4.4}, we may pick $w_0$ such that
$$
w_0|_{\partial \Omega}=0,\ \ \ \ \ \  \partial_{\nu} w_0|_{\partial \Omega}=\psi
$$
and
\begin{equation} \label{a5.1}
\|w_0\|_{H^{(m,m)}(\Omega)} \le \|\psi\|_{(1+\alpha)(\rho+3)+5}\le \| \psi \|_{(m_0-10)} \le C\varepsilon^{2A},
\end{equation}
assuming that $m \le \rho+3$ and
$$
(1+\alpha)(\rho +3)+5 \le m_0-10.
$$
Then by \lemmaref{l3.1}, $w_0$ has an extension to $\mathbb{R}^2_+$, still denoted by $w_0$. With the assumption $m_0-\rho-3 \ge 2N+6$, we compute on $\mathbb{R}^2_+$
\begin{eqnarray} \label{a5.2}
\| \Phi(w_0) \|_{\rho+1} \le C (\varepsilon^{2A}+\|w_0\|_{\rho+3}) \le C \varepsilon^{2A},
\end{eqnarray}
and start the iteration from here. Applying the Taylor expansion theorem to
$$
\mathcal{G}(t) := \Phi(w_0+tu_0)
$$
gives
$$
\Phi(w_0+u_0)=\Phi(w_0) + \mathcal{L}(w_0)u_0 +Q,
$$
where $Q$ is the quadratic error.

Heuristically, because the quadratic error is relatively small, we can make $\Phi(w_0+u_0)$ strictly smaller than $\Phi(w_0)$ by solving $\mathcal{L}(w_0)u_0= -\Phi(w_0)$. We repeat this process and expect $w_0+\sum_{n=0}^{\infty} u_n$ converges to a solution. However, to use the result from the Section \ref{s.8}, we need to modify the standard procedure of Nash-Moser iteration. We define the following:
\begin{eqnarray*}
&&(i)\ \ \ \mu:=\varepsilon^{-\frac{1}{2 \rho}}, \ \ \ \ \mu_n:=\mu^n, \ \ \ \ \text{ and } \ \ S_n :=S_{\mu_n}.\\
&&(ii)\ \ v_n:=S_n w_n, \ \ \ \ \ \text{and } \ \ \theta_n=|\Phi(v_n)|_{C^5}.\\
&&(iii)\ \text{The operator $L_{\theta_n}$ is defined as in (\ref{a4.3}), except being linearized at $v_n$.}\\
&&(iv)\ \ u_n \text{ is the unique solution of }  L_{\theta_n}u_n=f_n  \text{ in } \Omega, \ u_n|_{\partial {\Omega}}=0, \ \partial_{\nu} u|_{\partial {\Omega}}=0,\\
&& \ \ \ \ \ \ \  \text{where $f_n$ is to be stated below.}\\
&&(v)\ \ \ w_{n+1} =w_n + u_n.
\end{eqnarray*}
Then we modify the equation:
\begin{eqnarray} \label{a5.3}
\Phi(w_{n+1})&=&\Phi(w_n)+ \mathcal{L}(w_n) u_n +Q_n(w_n,u_n)\\
&=& \Phi(w_n) +\varepsilon S'_n a^{22}(v_n) L_{\theta_n}(v_n) u_n +e_n, \nonumber
\end{eqnarray}
where $Q_n(w_n,u_n)$ denotes the quadratic error in the Taylor expansion of $\Phi$ at $w_n$, and
\begin{eqnarray*}
e_n &=& (\mathcal{L}(w_n)-\mathcal{L}(v_n))u_n +Q_n(w_n,u_n)\\
&&+ \ \varepsilon (I-S'_n)a^{22}(v_n)L_{\theta_n}(v_n)u_n+\varepsilon \theta_n a^{22}(v_n)\partial^2_{\bar{x}}u_n\\
&& + \ \varepsilon (a^{22}(v_n))^{-1} \Phi(v_n) [ \partial^2_x u_n-\partial_x(loga^{22}(v_n)\sqrt{|g|})\partial_x u_n].
\end{eqnarray*}
We also define
\begin{eqnarray*}
&&(vi)\ \ E_0=0, \ \ \ \ \text{ and } \ \ E_n=\sum_{i=0}^{n-1} e_i.\\ 
&&(vii)\ f_0=-[\varepsilon S'_0a^{22}(v_0)]^{-1}S_0 \Phi(w_0), \ \ \text{ and }\\
&& \quad \quad \ f_n =[\varepsilon S'_n a^{22}(v_n)]^{-1} (S_{n-1} E_{n-1} -S_n E_n +(S_{n-1}-S_n) \Phi(w_0)). \quad \quad
\end{eqnarray*}
Notice that as $u_n$ is in the weighted space over $\Omega$, therefore so is $w_n$. Thus we may extend $w_n$ to $\mathbb{R}^2_+$ so that all the definition above are over $\mathbb{R}^2_+$, except the operator $L_{\theta}$. Then we compute:
\begin{eqnarray*}
&&\text{(i)}\ \ \ \|w_0- v_0\|_m= \|(I-S_0) w_0\|_m \le C \| w_0\|_m \le \varepsilon^{2A}.\\
&&(ii)\ \ \|v_0\|_m = \| S_0 w_0\|_m \\
&&\quad \quad \quad \le \left\{ \begin{array}{ll}
C \|w_0 \|_m \le C \varepsilon^{2A}, & \text{if } m \le \rho+3 ,\\
C \mu_0^{m-\rho-3} \|w_0 \|_{\rho+3}\le C   \varepsilon^{2A},& \text{if } m > \rho+3.
\end{array}\right. \\
&&(iii)\ \|w_0\|_{3N+14} \le \varepsilon^{2A} \text{ and } \|v_0\|_{3N+14} \le \varepsilon^{2A}.\\
&&(iv)\ \ \| \Phi(v_0)\|_m \le C (\varepsilon^{2A} +\|v_0\|_{m+2} ) \le C \varepsilon^{2A}.\\
&&(v)\ \ \ \|f_0\|_m=\|[\varepsilon S'_0a^{22}(v_0)]^{-1}S_0(w_0)\|_m \\
&&\quad \quad \quad \le C \varepsilon^{-1}\Big(\|S'_0 a^{22}(v_0)\|_m \|S_0 \Phi(w_0) \|_2+  \|S'_0 a^{22}(v_0)\|_2 \|S_0 \Phi(w_0) \|_m \Big) \\
&&\quad \quad \quad \le C \varepsilon^{2A-1}.\\
&&(vi)\ \ \|u_0\|_{H^{(m,m)}(\Omega)} \\
&&\quad \quad \quad \le C \varepsilon^{-N-1} \| f_0 \|_{(1+\alpha)+3N+10} + C \varepsilon^{-2N-2}\|v_0\|_{(1+\alpha)m+3N+19} \|f_0\|_{2N+9}\\
&&\quad \quad \quad \le  \varepsilon^{2A-2N-3}.\\
&&(vii)\ \|e_0\|_m \le C \varepsilon^{2A}.
\end{eqnarray*}
We skip the computation of (vii), the proof of (VIII$_n$) below will justify (vii). The seven statements above, together with (\ref{a5.1}) and (\ref{a5.2}), imply the case $n=0$ of the following eight statements, which will prove inductively that the $w_n$ converge to a solution.
\begin{eqnarray*}
&&\ \ \ \ \text{I$_n$ :}\ \|w_n\|_m \le \left\{ \begin{array}{ll}
C_1 \varepsilon^A,& \text{if } (1+\alpha)m+3N+10-\rho  \le -r  ,\\
C_1 \varepsilon^A \mu_n ^{(1+\alpha)m+3N+10-\rho },& \text{if } (1+\alpha)m+3N+10-\rho  \ge r,
\end{array}\right. \\
&&\ \ \quad \quad \quad \quad \quad \quad \quad \quad \quad \quad \quad \quad \quad \quad \quad \quad \quad \quad \quad \quad \quad \quad \quad \quad \quad \text{ for all } m \le \rho+3,\\
&&\ \ \ \text{II$_n$ :}\ \|w_n -v_n \|_m \le C_2 \varepsilon^{A} \mu_n^{m+3N+13+(\alpha-1)\rho}, \quad \ \ \quad \quad \quad \quad\text{ for all } m \le \rho+3,\\
&&\ \ \text{III$_n$ :}\ \|v_n\|_m \le \left\{ \begin{array}{ll}
C_3 \varepsilon^A, & \text{if } (1+\alpha)m+3N+10-\rho  < 0  ,\\
C_3 \varepsilon^A \mu_n ^{m+3N+10+(\alpha-1)\rho },& \text{if } (1+\alpha)m+3N+10-\rho  > 0,
\end{array}\right. \\
&&\ \ \quad \quad \quad \quad \quad \quad \quad \quad \quad \quad \quad \quad \quad \quad \quad \quad \quad \quad \quad \quad \quad \quad \quad \quad \quad \quad \text{ for all } m \in \mathbb{Z}_+,\\
&&\ \ \text{IV$_n$ :}\ \|w_n\|_{3N+14} \le C_1 \varepsilon^{A}, \quad \ \ \ \ \|v_n \|_{3N+14} \le C_3 \varepsilon^{A},\\
&&\ \ \ \text{V$_n$ :}\ \| f_n \|_m \le C_4 \varepsilon^{2A} (1+\mu^{\rho})\mu_{n}^{m-\rho},  \quad \quad \quad \quad \quad \quad \quad \quad \quad \quad \ \text{ for all } m \in \mathbb{Z}_+,\\
&&\ \ \text{VI$_n$ :}\ \|\Phi(w_n)\|_m \le \varepsilon^{A} \mu_n^{m-\rho}, \text{ and } \| \Phi(v_n)\|_m \le C_5 \varepsilon^{A} \mu_n^{m+3N+12+(\alpha-1)\rho },\\
&&\quad \quad \quad \quad \quad \quad \quad \quad \quad \quad \ \ \quad \quad \quad \quad \quad \quad \quad \quad \quad \quad \quad \quad \quad \quad \quad \text{ for all } m \le  \rho+1, \\
&&\ \text{VII$_n$ :}\ \|u_n \|_{H^{(m,m)}(\Omega)} \le \varepsilon^A \mu_n^{(1+\alpha)m+3N+10-\rho}, \quad \quad   \text{ for all } m \le \frac{m_0-3N-21}{1+\alpha},\\
&&\text{VIII$_n$ :}\ \| e_n \|_m \le C_6 \varepsilon^{2A} \mu_n^{m-\rho}, \quad \quad \quad \quad \quad \quad \quad \quad \quad \quad \quad \quad \quad \text{ for all } m \le \rho+1.
\end{eqnarray*}
Above $\rho$ is a big constant integer we will determine.

Now we specify $r$. Notice that
$$
\mathcal{S}=\Big\{ | (1+\alpha)m+3N+10-\rho| \ \Big| \ \ 0 \le m < \frac{m_0-3N-21}{1+\alpha}     \Big\}
$$
is a finite set consisting of strictly positive numbers. Without loss of generality we assume that $\alpha$ is irrational, then $r$ is defined as the minimum of
$$
\mathcal{S} \cup \Big\{\frac{\rho-3N-10}{1+\alpha} -\Big\lfloor \frac{\rho-3N-10}{1+\alpha} \Big\rfloor \Big\} \cup \Big\{\Big\lceil \frac{\rho-3N-10}{1+\alpha} \Big\rceil -\frac{\rho-3N-10}{1+\alpha} \Big\}.
$$
We now assume the statements are valid for all nonnegative integers less than or equal to $n-1$, then prove that they hold for $n$.

I$_n$: Assuming that VII$_{n-1}$, then for all $m \le  \rho+3$:

$$
\|w_n\|_m =\Big\| w_0 + \sum_{i=0}^{n-1} u_i \Big\|_m \le C \varepsilon^{A} \Big(1+\sum_{i=0}^{n-1} \mu_i^{(1+\alpha)m+3N+10-\rho}\Big).
$$
Therefore, if $(1+\alpha)m+3N+10-\rho  \le -r$:
$$
\|w_n\|_m \le 2 \varepsilon^{A}\Big( \sum_{i=0}^{n-1} u_i^{-r} \Big) \le 2 \varepsilon^{A} \sum_{i=0}^{\infty} (\frac{1}{\mu^r})^i \le C_0 \varepsilon^A,
$$
else if $(1+\alpha)m+3N+10-\rho  \ge r$:
\begin{eqnarray*}
\|w_n\|_m &\le& 2 \varepsilon^A \mu_n^{(1+\alpha)m+3N+10-\rho }  \sum_{i=0}^{n-1}\Big( \frac{\mu_i}{\mu_n} \Big)^{(1+\alpha)m+3N+10-\rho }\\
&\le& 2 \varepsilon^A \mu_n^{(1+\alpha)m+3N+10-\rho }  \sum_{i=0}^{n-1}\Big( \frac{1}{\mu^{(1+\alpha)m+3N+10-\rho }} \Big)^{n-i}\\
&\le& 2 \varepsilon^A \mu_n^{(1+\alpha)m+3N+10-\rho }  \sum_{i=1}^{\infty}\Big( \frac{1}{\mu^{r}} \Big)^{i} \le 2C_0 \varepsilon^A \mu_n^{(1+\alpha)m+3N+10-\rho }.
\end{eqnarray*}
Therefore I$_n$ follows by setting $C_1=2C_0$.

II$_n$: Under the condition that $m \le \rho+3$, we deduce from I$_{n-1}$ that
\begin{eqnarray*}
\|w_n-v_n\|_m &=& \|(I-S_n)w_n\|_m \le C \varepsilon^A \mu_n^{m-\rho-3} \|w_n\|_{\rho+3} \\
&\le& C \varepsilon^A \mu_n^{m-\rho} \mu_n^{(1+\alpha)(\rho+3)+3N+10-\rho}\\
&\le& C \varepsilon^A \mu_n^{m+3N+13+(\alpha-1)\rho}.
\end{eqnarray*}
Pick $C_2=C$ above, notice that $C_2$ works for all $n \in \mathbb{N}$.

III$_n$: We further assume $\rho \le m_0 -11$, and if $(1+\alpha)m+3N+10-\rho < 0$, then
$$
m \le \Big\lfloor \frac{\rho-3N-10}{1+\alpha} \Big\rfloor,
$$
because $m$ is an integer. We then compute
$$
\|v_n\|_m=\|S_nw_n\|_m \le C\|w_n\|_{\lfloor \frac{\rho-3N-10}{1+\alpha} \rfloor}.
$$
Because
\begin{eqnarray*}
&&\Big\lfloor \frac{\rho-3N-10}{1+\alpha} \Big\rfloor(1+\alpha)+3N+10-\rho \\
&&=\Big( \Big\lfloor \frac{\rho-3N-10}{1+\alpha} \Big\rfloor+\frac{3N+10-\rho}{1+\alpha}\Big)(1+\alpha)\\
&& \le -r (1+\alpha) \le -r,
\end{eqnarray*}
I$_{n}$ implies $$\|v_n\|_m \le C C_1 \varepsilon^{A} \le C_3 \varepsilon^A. $$
On the other hand, if $(1+\alpha)m+3N+10-\rho > 0$, then again,
$$
m \ge \Big\lceil \frac{\rho-3N-10}{1+\alpha} \Big\rceil,
$$
and we have
\begin{eqnarray*}
\|v_n\|_m &\le& C \mu_n^{  m-\lceil \frac{\rho-3N-10}{1+\alpha} \rceil } \|w_n\|_{\lceil \frac{\rho-3N-10}{1+\alpha} \rceil} \\
&\le& C C_1\varepsilon^{A} \mu_n^{m - \lceil \frac{\rho-3N-10}{1+\alpha} \rceil } \mu_n^{\lceil \frac{\rho-3N-10}{1+\alpha} \rceil (1+\alpha) +3N+10-\rho}\\
&\le& C C_1\varepsilon^A \mu_n^{m+3N+10+\frac{\alpha \rho}{1+\alpha}-\rho + \alpha -\frac{\alpha(3N+10)}{1+\alpha}}\\
&\le& C C_1\varepsilon^A \mu_n^{m+3N+20+(\alpha-1)\rho}\\
&\le& C_3\varepsilon^A \mu_n^{m+3N+20+(\alpha-1)\rho}.
\end{eqnarray*}

IV$_n$: This simply requires $(3N+14)(2+\alpha) < \rho$.

V$_n$: We first estimate with the help of VIII$_{n-1}$:
\begin{eqnarray*}
\|E_n\|_{\rho+1} &\le& \sum_{i=0}^{n-1} \|  e_i \|_{\rho+1}\le C_6 \sum_{i=0}^{n-1} \varepsilon^{2A} \mu_i\\
&\le& C_6 \varepsilon^{2A}\mu_n \sum_{i=1}^{\infty} (\mu^{-i}) \le C_6 \varepsilon^{2A} \mu_n.
\end{eqnarray*}

We then apply Nirenberg inequality to derive
\begin{eqnarray} \label{a5.4}
\|f_n\|_m &\le& C \varepsilon^{-1} ( \|S_{n-1}E_{n-1}-S_nE_n+(S_{n-1}-S_n) \Phi(w_0)\|_m\nonumber \\
&&+ \|S'_n a^{22}(v_n)\|_m \|S_{n-1}E_{n-1}+S_nE_n+(S_{n-1}-S_n)\Phi(w_0)\|_2) \nonumber \\
&\le&C \varepsilon^{-1} ( \|S_{n-1}E_{n-1}-S_nE_n+(S_{n-1}-S_n) \Phi(w_0)\|_m
\nonumber \\
&&+ \mu_n^{m-2}\|a^{22}(v_n)\|_2 \|S_{n-1}E_{n-1}+S_nE_n+(S_{n-1}-S_n)\Phi(w_0)\|_2) \nonumber \\
&\le&C \varepsilon^{-1} ( \|S_{n-1}E_{n-1}-S_nE_n+(S_{n-1}-S_n) \Phi(w_0)\|_m
\nonumber \\
&&+ \mu_n^{m-2} \|S_{n-1}E_{n-1}+S_nE_n+(S_{n-1}-S_n)\Phi(w_0)\|_2).
\end{eqnarray}
Then we prove V$_n$ in three different cases.

\begin{enumerate}
\item $m \ge \rho+1$. Consider
\begin{eqnarray*}
&&\|S_{n-1}E_{n-1}-S_nE_n+(S_{n-1}-S_n) \Phi(w_0)\|_m\\
&&\le \|S_{n-1}E_{n-1}\|_m + \|S_nE_n\|_m + \|S_{n-1} \Phi(w_0)\|_m + \|S_n \Phi(w_0)\|_m\\
&&\le C [\mu_{n-1}^{m- \rho-1}(\|E_{n-1}\|_{\rho+1}+\|\Phi(w_0)\|_{\rho+1})\\
&& \quad \quad \quad \quad +\mu_{n}^{m- \rho-1}(\|E_{n}\|_{\rho+1}+\|\Phi(w_0)\|_{\rho+1}) ] \\
&&\le C \varepsilon^{2A} [ \mu_n^{m-\rho}+\mu_{n-1}^{m-\rho} ]\\
&&\le C \varepsilon^{2A} (1+\mu^{\rho})\mu_n^{m-\rho}.
\end{eqnarray*}
For the other part we compute
\begin{eqnarray} \label{a5.5}
&&\|S_{n-1}E_{n-1}+S_nE_n+(S_{n-1}-S_n)\Phi(w_0)\|_2  \\
&&\le \|S_{n-1}E_{n-1}-E_{n-1}\|_2+\|E_{n-1}-E_n\|_2+\|E_n-S_nE_n\|_2 \nonumber \\
&&\quad \quad +\|S_{n-1}\Phi(w_0)-\Phi(w_0)\|_2+\|\Phi(w_0)-S_n \Phi(w_0)\|_2 \nonumber \\
&&\le C ( \mu_{n-1}^{2-\rho-1} \|E_{n-1}\|_{\rho+1} + \|e_{n-1}\|_2 + \mu_{n}^{2-\rho-1}  \|E_{n}\|_{\rho+1 } \nonumber \\
&&\quad \quad \mu_{n-1}^{2-\rho-1} \|\Phi(w_0)\|_{\rho+1} +  \mu_{n}^{2-\rho-1} \|\Phi(w_0)\|_{\rho+1} ) \nonumber \\
&&\le C \varepsilon^{2A} (1+\mu^{\rho}) \mu_n^{2-\rho}. \nonumber
\end{eqnarray}
Substituting the above into (\ref{a5.4}):
\begin{eqnarray*}
&&\|f_n\|_m \\
&&\quad \le C [ \varepsilon^{2A} (1+\mu^{\rho})\mu_n^{m-\rho}+\varepsilon^A \mu_n^{m-\rho}\varepsilon^{2A} (1+\mu^{\rho}) \mu_n^{2-\rho} ] \\
&&\quad \le C \varepsilon^{2A} (1+\mu^{\rho}) \mu_n^{m-\rho},
\end{eqnarray*}
where the last inequality requires $\rho \ge 2$.
\item $\rho+1 > m \ge 2$. By the computation in (\ref{a5.5}):
$$
\|S_{n-1}E_{n-1}-S_nE_n+(S_{n-1}-S_n) \Phi(w_0)\|_m \le C \varepsilon^{2A} (1+\mu^{\rho})\mu_n^{m-\rho},
$$
the rest is the same as case (1).
\item $m < 2$. As $\|S_n' a^{22}(v_n)\|_{m} \le C$:
$$
\|f_n\|_m \le C \varepsilon^{-1} ( \|S_{n-1}E_{n-1}-S_nE_n+(S_{n-1}-S_n) \Phi(w_0)\|_m).
$$
Similar computation to the previous cases yields the desired estimate.
\end{enumerate}

VI$_n$: From (\ref{a5.3}) and our definition of $f_i$ for $0 \le i \le n-1$, we have
$$
\Phi(w_n) = (I-S_{n-1}) \Phi(w_0) + (I+S_{n-1})E_{n-1}+ e_{n-1}.
$$
We require further that $m \le \rho+1 $, so
\begin{eqnarray*}
&&\| \Phi(w_n)\|_m \\
&&\le C \Big( \mu_{n-1}^{m- \rho-1} \|\Phi(w_0)\|_{\rho+1} +\mu_{n-1}^{m- \rho-1} \|E_{n-1} \|_{\rho+1} + \| e_{n-1} \|_m \Big)\\
&&\le C  \varepsilon^{2A} \mu_{n-1}^{m-\rho} \le C \varepsilon^{2A} (1+\mu^{\rho}) \mu_n^{m-\rho}\\
&&\le \varepsilon^{2A-1} \mu_n^{m-\rho}.
\end{eqnarray*}
Moreover since
\begin{eqnarray*}
\|\Phi(v_n)\|_m &\le& \|\Phi(w_n)\|_m +\|\Phi(w_n)-\Phi(v_n)\|_m\\
&\le&  \varepsilon^{2A-1} \mu_n^{m-\rho}+\varepsilon \|w_n-v_n \|_{m+2}\\
&\le&  \varepsilon^{2A-1} \mu_n^{m-\rho}+C \varepsilon^{A+1} \mu_n^{(1+\alpha)m+3N+12-\rho}\\
&\le& \varepsilon^A (\varepsilon^{A-1}+C\varepsilon) \mu_n^{(1+\alpha)m+3N+12-\rho},
\end{eqnarray*}
VI$_n$ is valid assuming $A \ge 2$.

VII$_n$: We apply \theoremref{t4.5} with $\phi, \psi=0$ to see
\begin{eqnarray*}
\|u_n\|_{H^{(m,m)}(\Omega)} &\le& C [\varepsilon^{-N-1} \|f_n\|_{(1+\alpha)m+3N+10}\\
&&+\quad  \varepsilon^{-2N-2} \|v_n\|_{(1+\alpha)m+3N+19} \|f_n\|_{2N+9}]\\
&\le& C [\varepsilon^{2A-N-1} (1+\mu^{\rho})\mu_n^{(1+\alpha)m+3N+10-\rho}\\
&&+\quad \varepsilon^{3A-2N-2} (1+\mu^{\rho}) \mu_n^{(1+\alpha)m+6N+29+(\alpha-1)\rho} \mu_n^{2N+9-\rho}]\\
&\le&C [\varepsilon^{2A-N-2} \mu_n^{(1+\alpha)m+3N+10-\rho}\\
&&+\quad \varepsilon^{3A-2N-3} \mu_n^{(1+\alpha)m+3N+10-\rho +5N+28 +(\alpha-1)\rho} ]\\
&\le& \varepsilon^{A} \mu_n^{(1+\alpha)m+3N+10-\rho}.
\end{eqnarray*}
The last inequality above requires $A \ge N+3$ and $\rho > \frac{5N+28}{1-\alpha}$.

VIII$_n$: Recall that the error is
\begin{eqnarray*}
e_n &=& Q_n(w_n,u_n)+\varepsilon (I-S'_n)a^{22}(v_n)L_{\theta}(v_n)u_n \\
&&+(\mathcal{L}(w_n)-\mathcal{L}(v_n))u_n+\varepsilon \theta_n a^{22}(v_n)\partial^2_{\bar{x}}u_n\\
&&+\varepsilon (a^{22}(v_n))^{-1} \Phi(v_n) [ \partial^2_x u_n-\partial_x(\text{log}a^{22}(v_n)\sqrt{|g|})\partial_x u_n].
\end{eqnarray*}
We will estimate it term by term.
\begin{enumerate}
\item By Taylor's theorem we have,
$$
Q_n(w_n,v_n)=\int_0^1 (1-t) \frac{\partial^2}{\partial t^2} \Phi(w_n +t u_n) dt.
$$
Then by the Sobolev embedding theorem and the Nirenberg inequality:
\begin{eqnarray*}
\|Q_n(w_n,u_n)\|_m &\le& \int_0^1 C (\|\nabla^2 \Phi (w_n+tu_n) \|_2)\|u_n\|_4\|u_n\|_{m+2}\\
&& \quad + \|\nabla^2 \Phi (w_n+tu_n) \|_{m+2} \|u_n\|_4.
\end{eqnarray*}
Then by (iii) of \lemmaref{l3.4}:
\begin{eqnarray*}
\|Q_n(w_n,u_n)\|_m &\le& C [(1+\|w_n\|_4+\|u_n\|_4)\|u_n\|_4\|u_n\|_{m+2}\\
&&\quad + (1+\|w_n\|_{m+2}+\|u_n\|_{m+2})\|u_n\|_4^2]\\
&\le& C \varepsilon^{2A} (\mu_n^{m-\rho+6N+33+(\alpha-1)\rho}+\mu_n^{m-\rho+(\alpha-2)\rho +9N+53})\\
&\le& C \varepsilon^{2A} \mu_n^{m-\rho},
\end{eqnarray*}
where we have used $m\le \rho+1$, $\alpha <1$, $\frac{9N+53}{2-\alpha} \le \rho$ and $\frac{6N+33}{1-\alpha} \le \rho$.

\item Recall that $\mu^\rho=\varepsilon^{-\frac{1}{2}}$
\begin{eqnarray*}
&&\|\varepsilon (I-S'_n)a^{22}(v_n)L_{\theta}(v_n)u_n\|_m\\
&\le& C \varepsilon [ \|a^{22}(v_n) \|_m \|f_n\|_2 + \|a^{22}(v_n)\|_2 \|f_n\|_m  ]\\
&\le& C \varepsilon [\|v_n\|_{m+2} \|f_n\|_2 + \|f_n\|_m          ]\\
&\le& C \varepsilon [\varepsilon^A \mu_n^{m+3N+10+(\alpha-1)\rho} \varepsilon^{2A} (1+\mu^\rho) \mu_n^{2-\rho}  + \varepsilon^{2A}(1+\mu^{\rho}) \mu_n^{m-\rho}                                          ]\\
&\le& C \varepsilon^{2A} \mu_n^{m-\rho}.
\end{eqnarray*}
For the last inequality we need $\frac{3N+12}{1-\alpha} \le \rho$.
\item We first recall that $\mathcal{L}(w)u=\varepsilon a^{ij}u_{;ij}+2 \varepsilon^5 K |g| \langle \nabla_g z, \nabla_g u \rangle.$ So a direct computation together with the Nirenberg inequality yeilds,
\begin{eqnarray*}
&&\|(\mathcal{L}(w_n)-\mathcal{L}(v_n))u_n\|_m \\
&\le& C \varepsilon (\|w_n-v_n\|_m \|u_n\|_4+\|w_n-v_n\|_4\|u_n\|_{m+2})\\
&\le& C \varepsilon ( \varepsilon^A \mu_n^{m+3N+13+(\alpha-1)\rho}
 \varepsilon^A \mu_n^{4(1+\alpha)+3N+10-\rho}\\
&&\quad + \varepsilon^A \mu_n^{4+3N+13+(\alpha-1)\rho} \varepsilon^A \mu_n^{(1+\alpha)m+3N+10-\rho} \\
&\le& C \varepsilon^{2A+1}(\mu_n^{m-\rho+6N+31+(\alpha-1)\rho}+\mu_n^{m+\alpha m +(\alpha-1)\rho+6N+23-\rho})\\
&\le& C \varepsilon^{2A+1}(\mu_n^{m-\rho+6N+31+(\alpha-1)\rho}+\mu_n^{m-\rho+ 6N+24-(1-2\alpha)\rho})\\
&\le& C \varepsilon^{2A+1} \mu_n^{m-\rho},
\end{eqnarray*}
where we have used $m \le \rho+1$, $\frac{6M+31}{1-\alpha} \le \rho$, $\alpha < \frac{1}{2}$ and $\frac{6N+24}{1-2 \alpha} \le \rho$.
\item With the Sobolev embedding theorem and the Nirenberg inequality:
\begin{eqnarray*}
&&| \varepsilon \theta_n a^{22}(v_n) \partial_{\bar{x}}^2 u_n \|_m \\
&\le& \varepsilon \theta_n (\|u_n\|_2 + \|v_n\|_{m+2}\|u_n\|_2 + \|u_n\|_{m+2}+\|v_n\|_4 \|u_n\|_{m+2})\\
&\le& \varepsilon^{A+1} \mu_n^{4+3N+13+(\alpha-1)\rho}  \varepsilon^A \mu_n^{2(1+\alpha)+3N+10-\rho} \\
&&+ \varepsilon^{A+1} \mu_n^{4+3N+13+(\alpha-1)\rho} C \varepsilon^A \mu_n^{m+3N+12+(\alpha-1)\rho} \varepsilon^A \mu_n^{2(1+\alpha)+3N+10-\rho} \\
&&+ \varepsilon^{A+1} \mu_n^{4+3N+13+(\alpha-1)\rho} \varepsilon^A \mu_n^{(1+\alpha)(m+2)+3N+10-\rho}  \\
&&+ \varepsilon^{A+1} \mu_n^{4+3N+13+(\alpha-1)\rho} C \varepsilon^A \mu_n^{4+3N+10+(\alpha-1)\rho} \varepsilon^A \mu_n^{(1+\alpha)(m+2)+3N+10-\rho} \\
&\le& C \varepsilon^{2A} \mu_n^{m-\rho}.
\end{eqnarray*}
The last inequality requires $m \le \rho+1$, $\alpha < \frac{1}{2}$, $\frac{6N+31}{1-\alpha} \le \rho$, $\frac{6N+32}{1-2\alpha} \le \rho$ and $\frac{9N+46}{2-3\alpha} \le \rho$.
\item Again with the Sobolev embedding theorem and the Nirenberg inequality:
\begin{eqnarray*}
&&\| \varepsilon (a^{22}(v_n))^{-1} \Phi(v_n) \partial_x^2 u_n \|_m \\
&\le& \varepsilon ( \|a^{22}(v_n)\|_m \|\Phi(v_n)\|_2 \|u_n\|_4 \\
&&\quad +\|a^{22}(v_n)\|_2 \| \Phi(v_n) \|_m \|u_n\|_4 + \|a^{22}(v_n)\|_2 \|\Phi(v_n)\|_2 \|u_n\|_{m+2}\\
&\le& C \varepsilon^{2A} \mu_n^{m-\rho},
\end{eqnarray*}
where we need $m \le \rho+1$, $\alpha < \frac{1}{2}$, $\frac{9N+44}{2-3\alpha}\le \rho$ and $\frac{6N+33}{1-2\alpha}$.

\item Under the same conditions, using the computation as in case (5) implies
$$
\|\varepsilon (a^{22})^{-1} \Phi(v_n) \partial_x ( \text{log}a^{22}(v_n) \sqrt{|g|} ) \partial_x u_n   \|_m\le C \varepsilon^A \mu_n^{m-\rho}.
$$
\end{enumerate}
Therefore, the discussion above concludes the main result in this section,
\begin{coro} \label{c5.1}
Suppose $m_0 \ge \frac{1+ \alpha}{1- 2\alpha}(6N+33)+4N+33$ and $A \ge N+3 $, and write $\chi := \lfloor \frac{m_0-13}{(1+\alpha)^2}-\frac{5N+21}{1+\alpha} \rfloor$ by, then $w_n \rightarrow w \in H^{\chi}$ and $\Phi(w_n) \rightarrow 0$ in $C^0(\Omega)$.
\end{coro}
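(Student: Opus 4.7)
The corollary is the terminal conclusion of the Nash--Moser scheme set up in this section, so my plan is primarily one of consolidation. First I would observe that the eight inductive statements I$_n$ through VIII$_n$ propagate from step $n$ to step $n+1$ under the stated hypotheses on $m_0$ and $A$. The base case $n=0$ was verified in the preliminary computations (items (i)--(vii) just before the statements), and the inductive step for each of I$_n$--VIII$_n$ has been carried out in the arguments immediately above the corollary. What remains is to record that the integer parameter $\rho$ can be chosen to satisfy every numerical constraint that appeared: the lower bounds $\rho > \frac{5N+28}{1-\alpha}$ from VII$_n$ and $\rho \ge \frac{6N+33}{1-2\alpha}$ from VIII$_n$ (the binding one), together with the compatibility requirement $\rho+3 \le \frac{m_0-3N-21}{1+\alpha}$ that allows \theoremref{t4.5} to be applied at the levels $m=\rho+3$ demanded by I$_n$. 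A direct arithmetic check shows that these constraints are simultaneously satisfiable precisely when $m_0 \ge \frac{1+\alpha}{1-2\alpha}(6N+33)+4N+33$, so the hypothesis in the corollary is exactly calibrated to run the induction.

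Next I would deduce $H^\chi$-convergence of $w_n$ by summing VII$_n$. Writing $w_n = w_0 + \sum_{i=0}^{n-1} u_i$, for each integer $m \le \frac{m_0-3N-21}{1+\alpha}$ we have
\[
\sum_{n=0}^\infty \|u_n\|_{H^m(\mathbb{R}^2_+)} \le \varepsilon^A \sum_{n=0}^\infty \mu^{n[(1+\alpha)m+3N+10-\rho]}.
\]
Whenever the exponent $(1+\alpha)m+3N+10-\rho$ is strictly negative, this geometric series in $\mu=\varepsilon^{-1/(2\rho)}$ converges, so $(w_n)$ is Cauchy in $H^m(\mathbb{R}^2_+)$ and converges to some $w \in H^m$. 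Taking $\rho$ as large as the iteration-compatibility bound $\rho+3 \le \frac{m_0-3N-21}{1+\alpha}$ allows, and then dividing through by the loss factor $(1+\alpha)$ from \theoremref{t4.5}, the largest integer $m$ for which convergence is guaranteed is exactly $\chi = \lfloor \frac{m_0-13}{(1+\alpha)^2}-\frac{5N+21}{1+\alpha}\rfloor$.

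For the $C^0$-convergence of $\Phi(w_n)$, I would use VI$_n$ at $m=2$, which is allowed since $\rho \gg 1$: this gives $\|\Phi(w_n)\|_{H^2(\mathbb{R}^2_+)} \le \varepsilon^A \mu^{n(2-\rho)} \to 0$. The Sobolev embedding $H^2(\mathbb{R}^2_+) \hookrightarrow C^0(\mathbb{R}^2_+)$ in two dimensions then yields $\Phi(w_n) \to 0$ uniformly on $\Omega$.

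The main obstacle in executing the plan is the bookkeeping: one must simultaneously track the standard Nash--Moser loss of derivatives and the additional loss of weighted functions coming from \cororef{c3.1} (the factor $\bar y/h$ in \lemmaref{l4.4}). The factor $(1+\alpha)$ in \theoremref{t4.5} records exactly how these two losses combine, and the restriction $\alpha < \frac12$ --- visible in the denominator $1-2\alpha$ of the hypothesis on $m_0$ --- is precisely the threshold beyond which the super-linear growth in the Moser estimates exceeds what the abstract Nash--Moser framework of \cite{MR656198, MR546504} can absorb. Verifying that the compound exponent $(1+\alpha)^2$ appearing in the expression for $\chi$ matches the arithmetic of the induction, rather than just the more naive $(1+\alpha)$, is the single most delicate check.
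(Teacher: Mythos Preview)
Your proposal is correct and follows the same approach as the paper: fix $\rho$ so that all the accumulated numerical constraints from I$_n$--VIII$_n$ hold, sum the geometric bound from VII$_n$ to obtain $H^\chi$-convergence of $(w_n)$, and invoke VI$_n$ at $m=2$ together with the Sobolev embedding for $\Phi(w_n)\to 0$ in $C^0$. The paper makes the explicit choice $\rho = \lfloor \frac{m_0-13}{1+\alpha} \rfloor - 2N - 9$ and then directly verifies $(1+\alpha)\chi + 3N + 10 - \rho \le -1$, whereas you describe the choice of $\rho$ more abstractly via the binding constraints, but the logic is identical.
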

\begin{proof}
Pick $\rho=\lfloor \frac{m_0-13}{1+\alpha} \rfloor -2N-9$, so that the conditions we need on $\rho$ in the iteration procedure are valid, including $m_0-\rho-3 \ge 2N+6$, $(1+\alpha)(\rho+3)+5 \le m_0-10$ and $\rho \ge \frac{6N+
33}{1-2\alpha}$. Moreover, it is easy to check $(1+\alpha)\chi +3N +10 -\rho \le -1$. Thus
$$
\|w_i-w_j\|_{\chi} \le \sum_{n=j}^{i-1}\|u_n\|_{\chi} \le \varepsilon^{N+3}\sum_{n=j}^{i-1} \mu_n^{(1+\alpha)\chi+3N+10-\rho} \le \varepsilon^{N+3} \sum_{n=j}^{i-1} \mu^{-n}.
$$
Hence the $w_n$ form a Cauchy sequence in $H^{\chi}(\Omega)$. Also,
$$
\|\Phi(w_n)\|_{C^0(\Omega)} \le C \| \Phi(w_n) \|
_2 \le \varepsilon^{N+3} \mu_n^{2- \rho},
$$
whose limit is  $0$ as $n \rightarrow \infty$ because $\rho \ge 2$.
\end{proof}

For a solution of (\ref{a1}) in a neighborhood of the origin, we need to obtain a solution in the complement of $\Omega_1^- \cup \Omega_2^-$. The complement consists of domains whose boundary are at least Lipschitz. However, we cannot apply the result in \cite{MR2765727} directly, since they dealt with sectors, whose angle can be made small by a linear transformation. Instead, we first apply a coordinate change,
$$
\tilde{x}=x^{\frac{1}{3}},\ \ \ \ \ \  \tilde{y}=y.
$$
The assumption $\bar{\alpha} < 1$ implies this coordinate change map $\Omega_{\kappa}^+$ to a cusp domain. Then we may use the method in the previous sections to derive the same estimate. Therefore as in \cite{MR2765727},
\begin{pro} \label{p5.2}
If $m_0 \ge 2N+9$, then we have a sequence,
$$
\{ \ w_n \ |\ \ w_n \in \overline{H}^{m_0-8}(\Omega) \ \ \ \text{and} \ \ \  w_n|_{\partial \Omega}=0 \ \},
$$
such that $w_n \rightarrow w$ in $\overline{H}^{m_0-8}(\Omega)$, with $$\|w\|_{m_0-8} \le C \varepsilon^{2N+6}.$$  Furthermore, $\Phi(w_n) \rightarrow 0$ in $C^0(\Omega)$.
\end{pro}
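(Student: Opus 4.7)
The plan is to reduce the proposition to the elliptic cusp Nash--Moser iteration of \proref{p3.5} via the coordinate change $\tilde{x}=x^{1/3}$, $\tilde{y}=y$ indicated in the paragraph preceding the statement. First I would verify, by direct inspection, that under this change each connected component of the complement of $\Omega_1^-\cup\Omega_2^-$ near the origin is transformed into a standard cusp domain in the sense of Section \ref{s2}, with a new graph function $\tilde{h}(\tilde{x})=h(\tilde{x}^3)$ satisfying the analogs of (\ref{a0}), (\ref{a3}), and (\ref{a0.5}). The hypothesis $\bar{\alpha}\in(0,1)$ is crucial here: it guarantees that the original boundary is not too sharp for the cube-root change to straighten it into a genuine cusp, and that $\tilde{h}$ inherits the derivative bound $|\tilde{h}^{(l)}\cdot \tilde{h}^{l-1}|\le C_l$ with constants uniform in $\varepsilon$.

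Next I would pull back the Darboux equation \eqref{a1} and its canonical linearized form from \lemmaref{l1} to the new coordinates. Because the change of variables is a smooth diffeomorphism away from the tangent point and the transported metric retains the sign structure of the original ($K>0$ in the interior, vanishing to order $N$ along the boundary curve), the resulting equation is again a degenerate-elliptic Monge--Amp\`ere equation of exactly the type treated in Section \ref{s2}. The approximate solution $z_0$ constructed by the Taylor expansion argument of Section 2 transports to an approximate solution in the $(\tilde{x},\tilde{y})$ coordinates with $\|\Phi(z_0)\|_{m_0-2N-8}=O(\varepsilon^{2N+6})$, as needed to initialize the iteration.

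With this reduction in hand, the entire machinery of Section \ref{s2}---namely \theoremref{t1}, \theoremref{t2}, \lemmaref{l3.1}, \cororef{c3.1}, \lemmaref{l3.4}---together with the Nash--Moser iteration of \proref{p3.5} applies verbatim in the new coordinates. The derivative count $m_0\ge 2N+9$ coincides with that of \proref{p3.5}, and the resulting sequence $w_n\to w$ in $\overline{H}^{m_0-8}$ with $\|w\|_{m_0-8}\le C\varepsilon^{2N+6}$ and $\Phi(w_n)\to 0$ in $C^0$ transports back to the original $(x,y)$ coordinates with the same conclusions, since the inverse change of variables is a smooth diffeomorphism on the interior of the region under consideration.

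The principal obstacle is verifying in detail that the cube-root change preserves all the structural ingredients used in Section \ref{s2}: the form of the weighted Sobolev norms $H^{(m,\gamma)}$, the extension operator of \lemmaref{l3.1}, the norm-comparison in \cororef{c3.1}, and the boundedness of the smoothing operators. Since the arguments of that section depend only on the derivative bound (\ref{a0.5}) for the graph function and the vanishing order of $K$ at the boundary, once the transported graph $\tilde{h}$ is shown to satisfy (\ref{a0.5}) with constants depending only on $\bar{\alpha}$ and the original $C_l$, the remaining verifications are routine and introduce no constraints beyond $\bar{\alpha}<1$.
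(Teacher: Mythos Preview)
Your proposal is correct and follows essentially the same approach as the paper: apply the cube-root coordinate change $\tilde{x}=x^{1/3}$, $\tilde{y}=y$ to convert the elliptic complement into a genuine cusp domain (using $\bar{\alpha}<1$), then invoke the entire Section~\ref{s2}--\proref{p3.5} machinery verbatim. The paper's own argument is in fact even terser than yours; one minor point is that the transported graph function is closer to $\tilde{h}(\tilde{x})=h^{-1}(\tilde{x}^3)$ than to $h(\tilde{x}^3)$, but this does not affect the strategy.
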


To prove the main theorem, on each elliptic region $\Omega_{\kappa}^+$ we construct $w_{\kappa}^+ \in \overline{H}^{m_0-8}(\Omega_{\kappa}^+)$ which satisfies,
\begin{eqnarray*}
&&\Phi(w_{\kappa}^+)=0 \ \text{ in } \ \Omega_{\kappa} \text{ , } \ \ \ \ \ \ w_{\kappa}^+=0 \ \text{ on } \ \partial \Omega_{\kappa}^+.
\end{eqnarray*}
Then by the  trace theorem
$$
\partial_{\nu} w_{\kappa}^+ \in H^{m_0-10}(\partial \Omega_{\kappa}^+),
$$
which satisfies the conditions of \cororef{c5.1}. Therefore we may use \cororef{c5.1} to obtain $w_{\varrho}^-\in H^{\chi}(\Omega_{\varrho}^-)$ (with $\chi$ defined in \cororef{c5.1}) satisfying
\begin{eqnarray*}
&&\Phi(w_{\varrho}^-)=0 \text{ in } \Omega_{\varrho}^-, \ \ \ \ w_{\varrho}^-|_{\partial \Omega_{\varrho}^-} =0 \ \ \text{ and } \ \partial_{\nu}w_{\varrho}^-|_{\partial \Omega_{\varrho}^-}=\partial_{\nu}w_{\kappa}^+|_{\partial \Omega_{\varrho}^-}.
\end{eqnarray*}
Lastly, we want to patch individual solutions to a complete one in a neighborhood of the origin. Consider any pair of $w_{\kappa}^+$ and $w_{\varrho}^-$ sitting on $\Omega_{\kappa}^+$ and $\Omega_{\varrho}^-$ respectively. By the choice of the approximate solution, the fact that $w_{\kappa}^+$ and $w_{\varrho}^-$ are in weighted spaces and $|h|_{C^1(|y|\le \sigma)}$ decreases to zero as the small positive parameter $\sigma \rightarrow 0$, the common boundary of any pair of $\Omega_{\kappa}^+$ and $\Omega_{\varrho}^-$ is noncharacteristic. Together with the trace theorem and the Sobolev embedding theorem we deduce that $w_{\kappa}^+$ and $w_{\varrho}^-$ agree along the boundary up to order $\chi-2$. This assures the combined solution is $C^{\chi-2}$ smooth across the boundary. Therefore we have constructed a solution in a neighborhood of the origin.


\bibliographystyle{abbrvurl}
\bibliography{bib}

\def\cprime{$'$}
\begin{thebibliography}{10}

\bibitem{MR0726313}
G.~P. Bryant, R. and D.~Yang.
\newblock Characteristics and existence of isometric embeddings.
\newblock {\em Duke Math. J.}, 50(4):893--994, 1983.

\bibitem{CCWSY}
G.-Q. Chen, J.~Clelland, M.~Slemrod, D.~Wang, and D.~Yang.
\newblock Isometric embedding via strongly symmetric positive systems.
\newblock {\em Preprint}, 2015.
\newblock \href {http://arxiv.org/abs/1502.04356} {\path{arXiv:1502.04356}}.

\bibitem{MR656198}
R.~S. Hamilton.
\newblock The inverse function theorem of {N}ash and {M}oser.
\newblock {\em Bull. Amer. Math. Soc. (N.S.)}, 7(1):65--222, 1982.
\newblock URL: \url{http://dx.doi.org/10.1090/S0273-0979-1982-15004-2}, \href
  {http://dx.doi.org/10.1090/S0273-0979-1982-15004-2}
  {\path{doi:10.1090/S0273-0979-1982-15004-2}}.

\bibitem{MR2094852}
Q.~Han.
\newblock On the isometric embedding of surfaces with {G}auss curvature
  changing sign cleanly.
\newblock {\em Comm. Pure Appl. Math.}, 58(2):285--295, 2005.
\newblock URL: \url{http://dx.doi.org/10.1002/cpa.20054}, \href
  {http://dx.doi.org/10.1002/cpa.20054} {\path{doi:10.1002/cpa.20054}}.

\bibitem{MR2183856}
Q.~Han.
\newblock Local isometric embedding of surfaces with {G}auss curvature changing
  sign stably across a curve.
\newblock {\em Calc. Var. Partial Differential Equations}, 25(1):79--103, 2006.
\newblock URL: \url{http://dx.doi.org/10.1007/s00526-005-0332-y}, \href
  {http://dx.doi.org/10.1007/s00526-005-0332-y}
  {\path{doi:10.1007/s00526-005-0332-y}}.

\bibitem{MR2779549}
Q.~Han.
\newblock {\em A basic course in partial differential equations}, volume 120 of
  {\em Graduate Studies in Mathematics}.
\newblock American Mathematical Society, Providence, RI, 2011.

\bibitem{MR2261749}
Q.~Han and J.-X. Hong.
\newblock {\em Isometric embedding of {R}iemannian manifolds in {E}uclidean
  spaces}, volume 130 of {\em Mathematical Surveys and Monographs}.
\newblock American Mathematical Society, Providence, RI, 2006.
\newblock URL: \url{http://dx.doi.org/10.1090/surv/130}, \href
  {http://dx.doi.org/10.1090/surv/130} {\path{doi:10.1090/surv/130}}.

\bibitem{MR2015470}
Q.~Han, J.-X. Hong, and C.-S. Lin.
\newblock Local isometric embedding of surfaces with nonpositive {G}aussian
  curvature.
\newblock {\em J. Differential Geom.}, 63(3):475--520, 2003.
\newblock URL: \url{http://projecteuclid.org/euclid.jdg/1090426772}.

\bibitem{MR2765727}
Q.~Han and M.~Khuri.
\newblock On the local isometric embedding in {$\Bbb R^3$} of surfaces with
  {G}aussian curvature of mixed sign.
\newblock {\em Comm. Anal. Geom.}, 18(4):649--704, 2010.
\newblock \href {http://arxiv.org/abs/1009.6214} {\path{arXiv:1009.6214}}.

\bibitem{MR2900544}
Q.~Han and M.~Khuri.
\newblock The linearized system for isometric embeddings and its characteristic
  variety.
\newblock {\em Adv. Math.}, 230(1):263�293, 2012.
\newblock \href {http://arxiv.org/abs/1111.4722} {\path{arXiv:1111.4722}}.

\bibitem{MR3070566}
Q.~Han and M.~Khuri.
\newblock Smooth solutions to a class of mixed type monge-amp\`{e}re equations.
\newblock {\em Calc. Var. Partial Differential Equations}, 47(3-4):825--867,
  2013.
\newblock \href {http://arxiv.org/abs/1204.6704} {\path{arXiv:1204.6704}}.

\bibitem{MR2334827}
M.~A. Khuri.
\newblock Counterexamples to the local solvability of {M}onge-{A}mp\`ere
  equations in the plane.
\newblock {\em Comm. Partial Differential Equations}, 32(4-6):665--674, 2007.
\newblock \href {http://arxiv.org/abs/1003.2241} {\path{arXiv:1003.2241}}.

\bibitem{MR2330415}
M.~A. Khuri.
\newblock The local isometric embedding in {$\Bbb R^3$} of two-dimensional
  {R}iemannian manifolds with {G}aussian curvature changing sign to finite
  order on a curve.
\newblock {\em J. Differential Geom.}, 76(2):249--291, 2007.
\newblock \href {http://arxiv.org/abs/1003.2244} {\path{arXiv:1003.2244}}.

\bibitem{MR2308865}
M.~A. Khuri.
\newblock Local solvability of degenerate {M}onge-{A}mp\`ere equations and
  applications to geometry.
\newblock {\em Electron. J. Differential Equations}, pages No. 65, 37 pp.
  (electronic), 2007.
\newblock \href {http://arxiv.org/abs/1003.2243} {\path{arXiv:1003.2243}}.

\bibitem{MR2641422}
M.~A. Khuri.
\newblock On the local solvability of darboux's equation.
\newblock {\em Discrete Contin. Dyn. Syst. 2009, Dynamical Systems,
  Differential Equations and Applications. 7th AIMS Conference, suppl.}, pages
  451--456, 2009.
\newblock \href {http://arxiv.org/abs/1003.2240} {\path{arXiv:1003.2240}}.

\bibitem{MR2833424}
M.~A. Khuri.
\newblock Boundary value problems for mixed type equations and applications.
\newblock {\em Nonlinear Anal.}, 74(17):6405--6415, 2011.
\newblock \href {http://arxiv.org/abs/1106.4000} {\path{arXiv:1106.4000}}.

\bibitem{MR816670}
C.~S. Lin.
\newblock The local isometric embedding in {${\bf R}^3$} of {$2$}-dimensional
  {R}iemannian manifolds with nonnegative curvature.
\newblock {\em J. Differential Geom.}, 21(2):213--230, 1985.
\newblock URL: \url{http://projecteuclid.org/euclid.jdg/1214439563}.

\bibitem{MR859276}
C.~S. Lin.
\newblock The local isometric embedding in {${\bf R}^3$} of two-dimensional
  {R}iemannian manifolds with {G}aussian curvature changing sign cleanly.
\newblock {\em Comm. Pure Appl. Math.}, 39(6):867--887, 1986.
\newblock URL: \url{http://dx.doi.org/10.1002/cpa.3160390607}, \href
  {http://dx.doi.org/10.1002/cpa.3160390607}
  {\path{doi:10.1002/cpa.3160390607}}.

\bibitem{MR546504}
S.~{\L}ojasiewicz, Jr. and E.~Zehnder.
\newblock An inverse function theorem in {F}r\'echet-spaces.
\newblock {\em J. Funct. Anal.}, 33(2):165--174, 1979.
\newblock URL: \url{http://dx.doi.org/10.1016/0022-1236(79)90109-5}, \href
  {http://dx.doi.org/10.1016/0022-1236(79)90109-5}
  {\path{doi:10.1016/0022-1236(79)90109-5}}.

\bibitem{MR1643072}
V.~G. Maz{\cprime}ya and S.~V. Poborchi.
\newblock {\em Differentiable functions on bad domains}.
\newblock World Scientific Publishing Co., Inc., River Edge, NJ, 1997.

\bibitem{MR0206461}
J.~Moser.
\newblock A rapidly convergent iteration method and non-linear differential
  equations. {II}.
\newblock {\em Ann. Scuola Norm. Sup. Pisa (3)}, 20:499--535, 1966.

\bibitem{MR0199523}
J.~Moser.
\newblock A rapidly convergent iteration method and non-linear partial
  differential equations. {I}.
\newblock {\em Ann. Scuola Norm. Sup. Pisa (3)}, 20:265--315, 1966.

\bibitem{MR2393070}
N.~Nadirashvili and Y.~Yuan.
\newblock Improving {P}ogorelov's isometric embedding counterexample.
\newblock {\em Calc. Var. Partial Differential Equations}, 32(3):319--323,
  2008.
\newblock URL: \url{http://dx.doi.org/10.1007/s00526-007-0140-7}, \href
  {http://dx.doi.org/10.1007/s00526-007-0140-7}
  {\path{doi:10.1007/s00526-007-0140-7}}.

\bibitem{MR0992597}
G.~Nakamura and Y.~Maeda.
\newblock Local smooth isometric embeddings of low-dimensional riemannian
  manifolds into euclidean spaces.
\newblock {\em Trans. Amer. Math. Soc.}, 313:1--51, 1988.
\newblock URL: \url{http://www.deaneyang.com/papers/goodman-yang.pdf}.

\bibitem{MR0286034}
A.~V. Pogorelov.
\newblock An example of a two-dimensional {R}iemannian metric that does not
  admit a local realization in {$E_{3}$}.
\newblock {\em Dokl. Akad. Nauk SSSR}, 198:42--43, 1971.

\bibitem{MR2754069}
T.~E. Poole.
\newblock The local isometric embedding problem for 3-dimensional riemannian
  manifolds with cleanly vanishing curvature.
\newblock {\em Comm. Partial Differential Equations}, 35(10):1802--1826, 2010.
\newblock URL: \url{http://dx.doi.org/10.1080/03605302.2010.506940}.

\bibitem{MR2744149}
M.~E. Taylor.
\newblock {\em Partial differential equations {III}. {N}onlinear equations},
  volume 117 of {\em Applied Mathematical Sciences}.
\newblock Springer, New York, second edition, 2011.
\newblock URL: \url{http://dx.doi.org/10.1007/978-1-4419-7049-7}, \href
  {http://dx.doi.org/10.1007/978-1-4419-7049-7}
  {\path{doi:10.1007/978-1-4419-7049-7}}.

\end{thebibliography}
\end{document}